\documentclass[11pt,reqno]{amsart}
\usepackage{amsmath, amssymb, latexsym, enumerate,  graphicx}
\usepackage{amscd,mathrsfs,epic,empheq,float}
\usepackage{etex}
\usepackage{bbm}
\usepackage[all]{xy}
\usepackage{pdfsync}
\usepackage{tikz}
\usetikzlibrary{arrows}
\usepackage{graphicx}
\usepackage{wasysym}
\setcounter{tocdepth}{1}

 \newlength{\baseunit}               
 \newcount{\numlines}                
 \setlength{\baseunit}{0.05ex}

\newtheorem{theorem}{Theorem}
\newtheorem{lemma}[theorem]{Lemma}
\newtheorem{remark}[theorem]{Remark}
\newtheorem{proposition}[theorem]{Proposition}
\newtheorem{corollary}[theorem]{Corollary}
\newtheorem{example}[theorem]{Example}

\newtheorem{definition}[theorem]{Definition}

\newtheorem{claim}{Claim}

\newcommand{\cO}{{\mathcal O}}

\newcommand{\Br}{\operatorname{Br}}

\def\down{\vee}
\def\up{\wedge}

\newcommand{\mC}{\mathbb{C}}

\newcommand{\mZ}{\mathbb{Z}}

\newcommand{\op}{\operatorname}

\DeclareMathOperator{\Hom}{Hom}

\makeatletter
\def\Set#1{\Set@h#1@}
\def\Set@h#1|#2@{\left\{\left.#1\vphantom{#2}\hskip.1em\,\right|\,\relax #2\right\}}
\makeatother



\begin{document}

\title[Schur-Weyl duality for the Brauer algebra]{Schur-Weyl duality for the Brauer algebra and the ortho-symplectic Lie superalgebra}

\author{Michael Ehrig \and Catharina Stroppel}

\address{Michael Ehrig\\ Endenicher Allee 60, 53115 Bonn, Germany}
\address{Catharina Stroppel\\ Endenicher Allee 60, 53115 Bonn, Germany}

\begin{abstract}
We give a proof of a Schur-Weyl duality statement between the Brauer algebra and the ortho-symplectic Lie superalgebra $\mathfrak{osp}(V)$.
\keywords{Brauer algebra \and Lie superalgebra \and double centralizer \and mixed 
tensor space \and invariant theory}
\end{abstract}

\thanks{M.E. was financed by the Deutsche Forschungsgemeinschaft Priority program 1388, C.S. thanks the MPI in Bonn for excellent working conditionss and financial support.
}

\maketitle

\section{Introduction}

\renewcommand{\thetheorem}{\Alph{theorem}}
In this paper we describe the centralizer of the action of the ortho-symplectic Lie superalgebra $\mathfrak{osp}(V)$ on the tensor powers of its defining representation $V$.  Here $V$ is a vector superspace of superdimension ${\rm sdim}V$ equal to $2m|2n$ or $2m+1|2n$ equipped with a supersymmetric bilinear form, and $\mathfrak{osp}(V)$ denotes the Lie superalgebra of all endomorphisms preserving this form. In particular, the extremal cases $n=0$ respectively $m=0$ give the classical orthogonal respectively symplectic simple Lie algebras. Our main result is the following generalization of Brauer's centralizer theorem in the classical case, see e.g. \cite{GW}, to a Lie superalgebra version. 
\begin{theorem} \label{thm:iso}
Let $m$ and $n$ be nonnegative integers and let $V$ be as above. Let $\delta=2m-2n$ respectively $\delta=2m+1-2n$ be the supertrace of $V$. Assume that one of the following assumptions holds
\begin{itemize}
\item ${\rm sdim} V \neq 2m|0$ and $d \leq m+n$ or
\item ${\rm sdim} V = 2m|0$ with $m > 0$  and $d < m$.
\end{itemize}
Then there is a canonical isomorphism of algebras
\begin{eqnarray}
\label{theoAiso}
{\rm End}_{\mathfrak{osp}(V)}(V^{\otimes d}) &\cong& {\rm Br}_d(\delta).
\end{eqnarray}
\end{theorem}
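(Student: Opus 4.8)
The plan is to write down an explicit algebra homomorphism
$\Phi_d\colon \operatorname{Br}_d(\delta)\to \operatorname{End}_{\mathfrak{osp}(V)}(V^{\otimes d})$
and then prove separately that it is surjective and injective under the stated hypotheses. The supersymmetric form on $V$ provides $\mathfrak{osp}(V)$-equivariant maps $\operatorname{coev}\colon \mathbb{C}\to V\otimes V$ and $\operatorname{ev}\colon V\otimes V\to \mathbb{C}$. Sending the Brauer generator $s_i$ to the Koszul-signed flip of the $i$-th and $(i+1)$-st tensor legs, and the Brauer generator $e_i$ to $\operatorname{coev}$ inserted into those two legs followed by $\operatorname{ev}$ applied to them, yields operators that commute with the diagonal $\mathfrak{osp}(V)$-action; a direct verification of the defining relations of $\operatorname{Br}_d(\delta)$ --- in particular that a closed loop contributes the scalar $\delta=\operatorname{str} V$ --- shows that $\Phi_d$ is well defined. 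This part is routine, as is the observation that the diagram basis of $\operatorname{Br}_d(\delta)$ maps onto the space of complete contraction tensors in $\operatorname{End}(V^{\otimes d})$.

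For surjectivity I would identify $\operatorname{End}_{\mathfrak{osp}(V)}(V^{\otimes d})$ with the invariant space $\big(V^{\otimes 2d}\big)^{\mathfrak{osp}(V)}$ by means of the form, pass from the Lie superalgebra $\mathfrak{osp}(V)$ to the algebraic supergroup $\mathrm{OSp}(V)$ --- here one checks that on the rational module $V^{\otimes 2d}$ the superalgebra invariants agree with the invariants of the identity component $\mathrm{SOSp}(V)$ --- and then invoke the first fundamental theorem of invariant theory for the orthosymplectic supergroup: every such invariant is a linear combination of complete pairwise contractions of the $2d$ legs by the form, hence lies in the image of $\Phi_d$. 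The only generator that the theorem for $\mathrm{SOSp}(V)$ produces beyond contractions is a super-Pfaffian, and the numerical hypotheses are precisely what guarantees that this invariant and its relatives do not occur in tensor degree $2d$. This also accounts for the dichotomy in the statement: in the purely even orthogonal case ${\rm sdim}\,V=2m|0$ the classical $\varepsilon$-tensor already obstructs surjectivity at degree $2m$, forcing the strict bound $d<m$, whereas in all remaining cases the obstruction sits high enough that $d\le m+n$ is sufficient. Once the extra invariants are ruled out, $\Phi_d$ is onto.

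For injectivity the goal is $\ker\Phi_d=0$ under the same hypotheses. Since $\operatorname{Br}_d(\delta)$ is cellular, its kernel is controlled by cell modules: it suffices to show that for every partition $\lambda$ of $d,d-2,d-4,\dots$ the cell module $\Delta_d(\lambda)$ has nonzero image, equivalently that the symmetric form on $\Delta_d(\lambda)$ obtained by pulling back the natural form on $V^{\otimes d}$ along $\Phi_d$ is nonzero. Concretely one exhibits, for each relevant $\lambda$, an explicit $\mathfrak{osp}(V)$-highest weight vector of the corresponding weight inside $V^{\otimes d}$; the bound $d\le m+n$ (resp.\ $d<m$) ensures that all such $\lambda$ have few enough rows and columns to embed into the weight lattice of $\mathfrak{osp}(V)$, so these vectors, and the associated primitive idempotents of $\operatorname{Br}_d(\delta)$, are nonzero. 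A workable alternative is to prove faithfulness by restricting along a classical orthogonal, respectively symplectic, subalgebra of $\mathfrak{osp}(V)$ and reducing to the known faithfulness of the Brauer algebra on tensor powers of the defining representation in the corresponding range; combined with surjectivity this forces $\dim\operatorname{End}_{\mathfrak{osp}(V)}(V^{\otimes d})=(2d-1)!!=\dim\operatorname{Br}_d(\delta)$, and hence that $\Phi_d$ is an isomorphism.

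I expect the main obstacle to be the interface between invariant theory for the Lie \emph{superalgebra} $\mathfrak{osp}(V)$ and for the \emph{supergroup} $\mathrm{OSp}(V)$: establishing the first fundamental theorem in exactly the form needed, and pinning down when the super-Pfaffian invariant intervenes, is what produces the two different numerical bounds and forms the delicate core of the argument. By contrast the faithfulness step, though combinatorially involved, should follow from a by-now-standard highest-weight-vector analysis once the range of admissible partitions is identified.
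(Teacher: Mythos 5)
Your proposal takes a genuinely different route from the paper. For surjectivity you pass from $\mathfrak{osp}(V)$-invariants in $V^{\otimes 2d}$ to invariants of the supergroup $\mathrm{OSp}(V)$ and invoke the first fundamental theorem of invariant theory for the orthosymplectic supergroup, discussing when the super-Pfaffian intervenes. This is precisely the Lehrer--Zhang route that the paper explicitly contrasts itself with: the paper's own proof is, by design, ``considerably more elementary'' and avoids the FFT for the supergroup altogether. Instead it restricts the $\mathfrak{osp}(V)$-action to a suitably embedded $\mathfrak{gl}(m|n)$, decomposes $V^{\otimes d}$ into a direct sum of mixed tensor products $W_{\mathbf s}$, lets the oriented Brauer category act on this direct sum, and shows by a short diagrammatic induction (Claims (1) and (2) in the proof of Theorem~\ref{thm:maintwo}) that any endomorphism commuting with all of $\mathfrak{osp}(V)$ lies in the image of the embedded Brauer algebra. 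What each approach buys: your route, if carried through using \cite{LZ2}, \cite{DLZ}, can eventually give optimal bounds (as in \cite{LZ3}), at the cost of importing deep graded-geometric input; the paper's route is self-contained modulo the $\mathfrak{gl}(m|n)$ Schur--Weyl duality for the walled Brauer algebra from \cite{BS5}, but it gives the non-optimal bound $d\le m+n$.

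There is, however, a real gap in your injectivity argument. You propose to deduce $\ker\Phi_d=0$ by showing each cell module $\Delta_d(\lambda)$ ``has nonzero image,'' via an $\mathfrak{osp}(V)$-highest-weight-vector analysis, and you refer to ``the associated primitive idempotents.'' This is exactly the method that the paper's introduction says fails here: when $n\ne 0$ and $m\ne 0$ the $\mathfrak{osp}(V)$-module $V^{\otimes d}$ is \emph{not} completely reducible, so the double-centralizer/cell-filtration mechanism from \cite{GW} and \cite{Benkartetal} that converts ``every cell module occurs in the tensor space'' into faithfulness of the Brauer algebra action does not apply. Producing a highest-weight vector of weight $\lambda$ does not, without complete reducibility, isolate a summand whose multiplicity space is $\Delta_d(\lambda)$, and the primitive idempotents you invoke need not exist outside the semisimple range. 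Your fallback --- restricting to a classical orthogonal or symplectic subalgebra of $\mathfrak{osp}(V)$ --- also does not obviously go through: the operator $\tau=e_i$ involves a sum over the \emph{entire} basis of $V$, so the Brauer algebra action does not restrict to $V_0^{\otimes d}$ (nor to $V_1^{\otimes d}$), and moreover the parameter $\delta$ would be wrong for the classical piece. The paper sidesteps this by embedding $\operatorname{Br}_d(\delta)$ into the additive closure of the oriented Brauer category and quoting the injectivity statement \cite[Theorem~7.8]{BS5} for the walled Brauer algebra, which holds for $d<(m+1)(n+1)$ and hence in particular for $d\le m+n$. To make your injectivity sketch rigorous you would either need to establish that $V^{\otimes d}$ admits a Brauer-cell filtration in which all cell modules appear (nontrivial in the non-semisimple situation and not in the literature you cite), or replace it with some other mechanism such as the $\mathfrak{gl}(m|n)$ reduction the paper uses.
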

Here ${\rm Br}_d(\delta)$ denotes the Brauer algebra on $d$ strands with parameter $\delta$ which was originally introduced by Brauer in \cite{Brauer}, see Definition \ref{defBrauer}. Note that in case $m=0$ or $n=0$ we obtain the Lie algebra version of Brauer's classical centralizer theorem explained in modern language for instance in \cite{GW}. In contrast to these classical cases, the endomorphism algebras of finite dimensional representations of a Lie superalgebra are in general not semsimple. Hence, our theorem covers also the (most interesting) cases in which ${\rm Br}_d(\delta)$ is not semisimple.  These non-semisimple Brauer algebras also appear as idempotent truncations of endomorphism rings of modules in category $\cO$ for the classical Lie algebras, see \cite{ES}, \cite{ES2}. In \cite{ES} it was shown that the Brauer algebra can be equipped with a nonnegative $\mZ$-grading (which is even Koszul in case $\delta\not=0)$, see \cite{ES}, \cite{ES2}, \cite{GeLi}. As a consequence of our theorem, the endomorphism rings \eqref{theoAiso} inherit a grading. The existence of such a grading on Brauer algebras is rather unexpected and up to now this grading cannot be described intrinsically in terms of the representation theory of the Lie superalgebra.

The existence of a ring homomorphism \eqref{theoAiso} from the Brauer algebra to the endomorphism ring of $V^{\otimes d}$ is not new.  It is for instance a special case of the Brauer algebra actions defined in \cite{Benkartetal} and also follows from the more abstract theory of Deligne categories, \cite{Deligne}. However, the isomorphism theorem is, as far as we know, new. The crucial point here is that in general (and in particular in contrast to the classical case),  the tensor space is not completely reducible and then unfortunately the methods from  \cite{GW} and \cite{Benkartetal} do not apply. As far as we know there is not much known about the indecomposable summands appearing in this tensor space and even less is known about the general structure of the category of finite dimensional representations of  $\mathfrak{osp}(V)$, see \cite{GrusonSerganova}, \cite{Serganova}, and also \cite{Ke} for some partial results. This is very much in contrast to the case of the general Lie superalgebra $\mathfrak{gl}(m|n)$, where the corresponding (mixed) tensor spaces were recently studied and described in detail, \cite{CW}, \cite{BS5}. There, the role of the Brauer algebra is played by the {\it walled} Brauer algebra respectively {\it walled} Brauer category, sometimes also called oriented Brauer category, see \cite{BS5},  \cite{Betal}, \cite{SS}. As our main tool we construct an embedding of the Brauer algebra into an additive closure of the walled Brauer category and then use the above mentioned results for the general linear case to deduce Theorem~\ref{thm:iso} in the ortho-symplectic case.

More precisely we restrict the action of $\mathfrak{osp}(V)$ to a suitably embedded $\mathfrak{gl}(m|n)$ and decompose the occurring representation $V^{\otimes d}$ as a  direct sum of mixed tensor products on which the walled Brauer category acts naturally. One of the crucial steps is the following generalization from \cite{BS5}:

\begin{theorem} \label{thm:obrauer_action}
Let $m,n\in\mZ_{\geq 0}$, $d>0$. The corresponding oriented Brauer category $\mathcal{OB}_d(m-n)$ acts on $V^{\otimes d}$ and its action commutes with the action of $\mathfrak{gl}(m|n)$.
\end{theorem}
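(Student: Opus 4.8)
The plan is to restrict the $\mathfrak{osp}(V)$-action on $V^{\otimes d}$ to a suitably embedded copy of $\mathfrak{gl}(m|n)$, decompose $V^{\otimes d}$ as a direct sum of mixed tensor powers of the natural $\mathfrak{gl}(m|n)$-module, and then transport the oriented Brauer category action on such mixed tensor powers from \cite{BS5}. First I would fix an embedding $\mathfrak{gl}(m|n)\hookrightarrow\mathfrak{osp}(V)$ arising from a decomposition of $V$ into a maximal isotropic subspace, a dual isotropic complement and, only in the case ${\rm sdim}\,V=2m+1|2n$, a one-dimensional anisotropic piece. Concretely this realizes, as $\mathfrak{gl}(m|n)$-modules,
\begin{equation*}
V\ \cong\ W\oplus W^{*}\qquad\text{resp.}\qquad V\ \cong\ W\oplus W^{*}\oplus\mathbb{C},
\end{equation*}
where $W=W_{\bar0}\oplus W_{\bar1}$ with $\dim W_{\bar0}=m$ and $\dim W_{\bar1}=n$ is the natural module, $W^{*}=W^{\vee}$ is paired with $W$ by the bilinear form on $V$, and $\mathbb{C}$ is the trivial $1|0$-dimensional module. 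The only numerical input needed is that the categorical superdimension of $W$ equals $m-n$, which is immediate.

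Next I would use the presentation of the oriented Brauer category recalled, and realized on mixed tensor spaces, in \cite{BS5} (see also \cite{Deligne}, \cite{SS}): $\mathcal{OB}(\delta)$ is the free $\mathbb{C}$-linear symmetric monoidal category on a single object $\uparrow$ together with a chosen dual $\downarrow$ and evaluation and coevaluation morphisms, the only scalar relation being that a closed oriented loop evaluates to $\delta$; equivalently, a $\mathbb{C}$-linear symmetric monoidal functor out of $\mathcal{OB}(\delta)$ is the same datum as a dualizable object of categorical dimension $\delta$. Applied to the category of finite-dimensional $\mathfrak{gl}(m|n)$-supermodules with its Koszul-sign braiding, and to the object $W$ --- which is dualizable with dual $W^{*}$, evaluation $W^{*}\otimes W\to\mathbb{C}$, coevaluation $\mathbb{C}\to W\otimes W^{*}$, and superdimension $m-n$ --- this produces a $\mathbb{C}$-linear symmetric monoidal functor
\begin{equation*}
F\colon\ \mathcal{OB}(m-n)\ \longrightarrow\ \mathfrak{gl}(m|n)\text{-smod},\qquad\uparrow\,\mapsto\,W,\quad\downarrow\,\mapsto\,W^{*},
\end{equation*}
sending the crossing generator to the graded flip and the cup and cap to coevaluation and evaluation.

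Distributing $\otimes$ over $\oplus$ in the decomposition of $V$ now gives, in the even case, $V^{\otimes d}\cong\bigoplus_{w}F(w)$ with $w$ ranging exactly once over all length-$d$ words in $\uparrow,\downarrow$; in the odd case one gets the same summands, now with multiplicities, together with tensor factors equal to the trivial module. In either case this equips $V^{\otimes d}$ with an action of the appropriate truncation $\mathcal{OB}_d(m-n)$ of $\mathcal{OB}(m-n)$ through $F$ (letting it act by the identity on any trivial factors), and, $F$ being a functor to $\mathfrak{gl}(m|n)$-modules, this action is by $\mathfrak{gl}(m|n)$-equivariant maps; hence it commutes with the $\mathfrak{gl}(m|n)$-action, which is the assertion.

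The real work is concentrated in the construction of $F$, and it is essentially sign bookkeeping in the super setting: one must verify that the chosen evaluation, coevaluation and graded flip for $W$ genuinely satisfy the oriented Brauer relations with parameter $m-n$ (and not $n-m$), and --- with a view toward Theorem~\ref{thm:iso} --- that the maps $\mathbb{C}\to V\otimes V$ and $V\otimes V\to\mathbb{C}$ they induce on the summands of $V$ agree, up to the scalar $m-n$ on closed loops, with the $\mathfrak{osp}(V)$-intertwiners built directly from the form on $V$. These are precisely the points at which the super case departs from the classical one, so they have to be checked with care; once they are in place, Theorem~\ref{thm:obrauer_action} follows formally.
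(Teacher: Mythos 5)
Your high-level plan is exactly the one the paper follows: restrict the $\mathfrak{osp}(V)$-action on $V^{\otimes d}$ to the embedded copy of $\mathfrak{gl}(m|n)$, decompose $V\cong W\oplus W^*$ (resp.\ $W\oplus W^*\oplus\mathbb{C}$), and transport the oriented Brauer category action onto the resulting sum of mixed tensor spaces. The genuine difference lies in how you build the action. You invoke the universal property of $\mathcal{OB}(\delta)$ as the free $\mathbb{C}$-linear rigid symmetric monoidal category on a dualizable object of dimension $\delta$, so the functor $F$ exists formally once one checks that $W$ (with its chosen dual, evaluation, coevaluation and Koszul braiding) has superdimension $m-n$. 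The paper instead writes down $F$ by an explicit closed formula on labelled diagrams via a ``weight'' function (a product of Koszul signs at crossings and of extra signs at cups and caps with large labels), and then checks the generators and relations of the oriented Brauer category from \cite{Betal} by direct computation. Your route is cleaner conceptually, but the paper's explicit weight formula is not idle: it is used later (notably in the proof of Theorem~\ref{thm:maintwo}, Claim~2, and in Lemma~\ref{lem:find_larger}) where one needs to track individual matrix coefficients $\gamma_{\mathbf{s},b,\mathbf{t}}\,{\rm wt}(b_{\underline i}^{\underline j})$ under the $\mathfrak{osp}$-equivariance, so the concreteness buys something.

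One point you gloss over that the paper handles carefully: the paper's $\mathcal{OB}_d(m-n)$ is not the ``usual'' oriented Brauer category of \cite{Betal} but an enlarged version whose objects are sequences in $\{\up,\down,\circ\}^d$, with $\circ$ marking the position of a trivial tensor factor (Remark~\ref{Jon}). This is what makes the decomposition $V^{\otimes d}\cong\bigoplus_{\mathbf{s}\in\widehat{\rm OSeq}[d]}W_{\mathbf{s}}$ multiplicity-free and lets each $\mathbf{s}$ label a unique summand, which is important for the injectivity and surjectivity arguments in Section~5. Your phrase ``letting it act by the identity on any trivial factors'' captures the right idea, but as stated it does not by itself produce the paper's category $\mathcal{OB}_d(m-n)$ from the free rigid category; the paper reduces to the \cite{Betal}-version precisely by the lemma that a morphism is determined by its reduction $\overline{b}$, and you would need to state and use that reduction explicitly. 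With that bookkeeping added, your argument is correct.
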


We then embed the Brauer algebra into the endomorphism ring of $V^{\otimes d}$ viewed as an object inside the additive closure of the oriented Brauer category. The injectivity of Theorem \ref{thm:iso} is then deduced from  the corresponding injectivity result  \cite[Theorem 7.1]{BS5} for walled Brauer algebras. Finally we show (by very elementary arguments) that any  $\mathfrak{gl}(m|n)$ endomorphism that commutes also with the action of $\mathfrak{osp}(V)$ is already contained in the image of the embedding and gives the desired isomorphism. (This method of proof does not give optimal bounds. Optimal bounds could be established recently in \cite{LZ3}, however on the cost of using less elementary arguments.)

We also like to stress that our proof does not rely on a Schur-Weyl duality for the ortho-symplectic super{\it group} as considered independently by Lehrer and Zhang in \cite{LZ1} using methods from graded-commutative algebraic geometry.  Our proof of the isomorphism theorem is considerably more elementary and only relies on a rather simple reduction to the general linear Lie superalgebra case, see \eqref{THEdiagram} and the mixed Schur-Weyl duality from \cite{BS5} which in turn is deduced from the Schur-Weyl duality of Sergeev \cite{Sergeev} and Berele-Regev \cite{BR} for the general linear Lie superalgebras. A different reduction argument (to the general linear group) was recently used in \cite{DLZ}.  Using however the non-elementary grading results of \cite{BS5}, it is possible to describe also the kernel of our action in general, but it requires to work with a graded version of the Brauer algebra as defined in \cite{ES} and will appear in a separate article where this graded Brauer algebra is studied. The extra grading refines the results from \cite{LZ1}, \cite{LZ2}.  
We expect that our approach generalizes easily to the quantized (super) case using the quantised walled Brauer algebras \hfill\\

\textbf{Acknowledgement} We thank Gus Lehrer for pointing out an inaccuracy in an earlier version of the paper as well as Kevin Coulembier and Daniel Tubbenhauer for various comments on a preliminary version.

\section{The Lie superalgebra}
\renewcommand{\thetheorem}{\arabic{section}.\arabic{theorem}}

We fix as ground field the complex numbers $\mathbb{C}$. By a superspace we mean a $\mZ/2\mZ$ -graded vector space $V = V_0 \oplus V_1$; for  any homogeneous element $v\in V$ we denote by $| v | \in \{0,1 \}$ its parity.  The integer $\op{dim}V_0-\op{dim}V_1$ is called the {\it supertrace} of $V$ and we denote by ${\rm sdim}V = \op{dim}V_0|\op{dim}V_1$ its {\it superdimension}. Given a superspace $V$ let $\mathfrak{gl}(V)$ be the corresponding general Lie superalgebra, i.e. the superspace ${\rm End}_\mathbb{C}(V)$ of all endomorphism  with the superbracket
$$[X,Y] = X \circ Y - (-1)^{|X| \cdot |Y|}Y \circ X .$$

For the whole paper we fix now  $n\in\mathbb{Z}_{\geq0}$ and a finite dimensional superspace $V = V_0 \oplus V_1$ with $\op{dim}V_1=2n$, and  equipped with a non-degenerate supersymmetric bilinear form $\left\langle - , - \right\rangle$, i.e. a bilinear form $V\times V\rightarrow \mC$ which is symmetric when restricted to $V_0 \times V_0$, skew-symmetric on $V_1 \times V_1$ and zero on mixed products. It will sometimes be convenient to work with a fixed homogeneous basis $v_i, i \in I$ of $V$, for a suitable indexing set $I$, and right dual basis $v_i^*$, i.e.  $\left\langle v_i,v_j^* \right\rangle = \delta_{i,j}$. Attached to this data we have the following:

\begin{definition}
The \emph{ortho-symplectic Lie superalgebra} $\mathfrak{osp}(V)$ is the Lie supersubalgebra of $\mathfrak{gl}(V)$ consisting of all endomorphisms which respect the supersymmetric bilinear form. Explicitly, a homogeneous element $X \in \mathfrak{osp}(V)$ has to satisfy 
\begin{equation}
\label{oh}
\left\langle Xv,w \right\rangle + (-1)^{|X| \cdot |v|} \left\langle v,Xw\right\rangle = 0,
\end{equation}
for homogeneous $v\in V$.
\end{definition}
From now on we make the convention that, whenever the parity of an element appears in a formula, the element is assumed to be homogeneous.

\begin{remark}{\rm
If $X \in \mathfrak{osp}(V)$, then $\left\langle Xv,w \right\rangle \neq 0$ implies $|X| = |v| + |w|$.
}
\end{remark}

\section{The Brauer algebra}

The following algebra was originally defined by Brauer \cite{Brauer} in his study of the orthogonal group.

\begin{definition}
\label{defBrauer}
Let $d\in\mZ_{\geq 0}$ and $\delta\in\mC$. The \emph{Brauer algebra} $\Br_d(\delta)$ is the associative unital $\mC$-algebra generated by the elements 
$$s_i,\, e_i \text{ for } 1\leq i\leq d-1,$$
subject to the relations
\begin{eqnarray}
\label{relBrauer}
&s_i^2=1, \quad s_i s_j=s_j s_i,\quad s_k s_{k+1} s_k=s_{k+1} s_k s_{k+1},&\nonumber\\
&e_i^2=\delta e_i, \quad e_i e_j=e_j e_i,\quad e_k e_{k+1} e_k=e_{k+1}, \quad e_{k+1} e_{k} e_{k+1}=e_{k},&\\
&s_i e_i=e_i= e_i s_i,\quad s_k e_{k+1} e_k=s_{k+1} e_k, \quad s_{k+1} e_{k} e_{k+1}=s_{k}e_{k+1},&\nonumber
\end{eqnarray}
for $1 \leq i,j \leq d-1$, $\mid i-j \mid > 1$, and $1 \leq k \leq d-2$.
\end{definition}

Following Brauer we realize $\Br_d(\delta)$ as a diagram algebra: A \emph{Brauer diagram} on $2d$ vertices is a partitioning $b$ of the set $\{1,2,\ldots, d, 1^*,2^*,\ldots d^*\}$ into $d$ subsets of cardinality $2$. Let $\mathcal{B}[d]$ be the set of such Brauer diagrams. An element can be displayed graphically by arranging $2d$ vertices in two rows $1,2,\ldots, d$ and ${1^*,2^*,\dots, d^*}$, with each vertex linked to precisely one other vertex. Two such diagrams are considered to be the same or equivalent if they link the same $d$ pairs of points. 

Special Brauer diagrams are the "unit" $1=\{\{1,1^*\},\{2,2^*\},\cdots, \{d,d^*\}\}$ connecting always $j$ with $j^*$ for all $1\leq j\leq r$, and for $1\leq i\leq r-1$ the $s_i$ (respectively $e_i$) which connects $j$ with $j^*$ except of the new pairs $\{i,(i+1)^*\}, \{i+1,i^*\}$ (respectively $\{i,i+1\},\{i^*,(i+1)^*\}$) involving the numbers $i$ and $i+1$;
\begin{equation}
\label{diag}
\begin{tikzpicture}[scale=0.7,thick,>=angle 90]
\node at (0,.5) {$s_i$};
\draw (.6,0) -- +(0,1);
\draw [dotted] (1,.5) -- +(1,0);
\draw (2.4,0) -- +(0,1);
\draw (3,0) -- +(.6,1) node[above] {\tiny i+1};
\draw (3.6,0) -- +(-.6,1) node[above] {\tiny i};
\draw (4.2,0) -- +(0,1);
\draw [dotted] (4.6,.5) -- +(1,0);
\draw (6.2,0) -- +(0,1);

\begin{scope}[xshift=8cm]
\node at (0,.5) {$e_i$};
\draw (.6,0) -- +(0,1);
\draw [dotted] (1,.5) -- +(1,0);
\draw (2.4,0) -- +(0,1);
\draw (3,0) to [out=90,in=-180] +(.3,.3) to [out=0,in=90] +(.3,-.3);
\draw (3,1) node[above] {\tiny i} to [out=-90,in=-180] +(.3,-.3) to [out=0,in=-90] +(.3,.3) node[above] {\tiny i+1};
\draw (4.2,0) -- +(0,1);
\draw [dotted] (4.6,.5) -- +(1,0);
\draw (6.2,0) -- +(0,1);
\end{scope}
\end{tikzpicture}
\end{equation}

Given two Brauer diagrams $b$ and $b'$, their concatenation $b'\circ b$ is obtained by putting $b$ on top of $b'$ identifying vertex $i^*$ in $b$ with vertex $i$ in $b'$ and removing all the internal loops. Let $c(b,b')$ be the number of loops removed.  Then we have the following fact, see e.g. \cite[Section 9 and 10]{GW} for details:

\begin{lemma}
The \emph{Brauer algebra} $\Br_d(\delta)$ is, via the assignment \eqref{diag} on generators, canonically isomorphic to the $\mC$-algebra with basis $\mathcal{B}[d]$ and multiplication $bb'=\delta^{c(b,b')} b\circ b'$ for $b,b'\in\mathcal{B}[d]$.
\end{lemma}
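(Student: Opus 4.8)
The plan is to compare the abstractly presented algebra $A:=\Br_d(\delta)$ of Definition~\ref{defBrauer} with the diagram algebra $B$ whose underlying space has basis $\mathcal{B}[d]$ and whose multiplication is $bb'=\delta^{c(b,b')}\,b\circ b'$, and to show that the assignment \eqref{diag} extends to an algebra isomorphism $\phi\colon A\to B$. First I would verify that $\phi$ is well defined, i.e.\ that the Brauer diagrams attached to $s_i$ and $e_i$ satisfy the defining relations \eqref{relBrauer} inside $B$. This is a finite, purely diagrammatic check: for each relation one draws the relevant concatenations, records which pairs of the $2d$ vertices end up linked, and counts the internal loops that are removed; a loop (hence a factor $\delta$) is produced exactly by the relations in which a cap meets a cup, such as $e_i^2=\delta e_i$, while the braid-type and mixed relations are loop-free. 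The only mildly delicate point is the book-keeping showing that $\circ$ is associative up to the loop-counting factor, which is standard.

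Next I would establish surjectivity of $\phi$ by showing that every Brauer diagram lies in the subalgebra generated by the diagrams $s_i$ and $e_i$. The tool is a normal form: a diagram $b\in\mathcal{B}[d]$ is determined by its set of $k$ arcs among the top vertices, its set of $k$ arcs among the bottom vertices, and the bijection between the remaining $d-2k$ vertices of each row given by its through-strands, for some $0\le k\le\lfloor d/2\rfloor$. Writing $\epsilon_k:=e_1e_3\cdots e_{2k-1}$ for the standard diagram with the first $k$ arcs on each side and all other strands vertical, one shows by induction on $k$ (and on $d$) that $b=w\,\epsilon_k\,w'$ for suitable products $w,w'$ of the $s_i$ — chosen, say, as minimal-length coset representatives that move the arc positions and the through-strands into standard position. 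Hence each $b$ is in the image of $\phi$, so $\phi$ is onto.

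It remains to show that $\phi$ is injective, which I would deduce by a dimension comparison. Since the number of perfect matchings of $2d$ points is $(2d-1)!!$, we have $\dim_{\mathbb{C}}B=(2d-1)!!$, so it suffices to prove $\dim_{\mathbb{C}}A\le(2d-1)!!$. For this I would show that the normal-form monomials $w\,\epsilon_k\,w'$ of the previous step — one for each Brauer diagram — already span $A$: using only the relations \eqref{relBrauer}, any word in the generators $s_i,e_i$ can be rewritten as $\delta^{c}$ times such a normal-form monomial. One clean way is induction on $d$ via the inclusion $\Br_{d-1}(\delta)\hookrightarrow\Br_d(\delta)$ onto the first $d-1$ strands: one moves the rightmost generators involving the index $d-1$ into a controlled position, repeatedly using the braid relations $s_ks_{k+1}s_k=s_{k+1}s_ks_{k+1}$, the relations $e_i^2=\delta e_i$ (absorbing each created loop into a power of $\delta$) and the mixed relations $s_ke_{k+1}e_k=s_{k+1}e_k$ etc., until a single ``$d$-th strand block'' is isolated and the remainder lies in $\Br_{d-1}(\delta)$; alternatively one sets up a confluent rewriting system on words in the generators and invokes the diamond lemma. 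Combining this with surjectivity and $\dim_{\mathbb{C}}B=(2d-1)!!$ forces $\phi$ to be an isomorphism.

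The main obstacle is precisely this last reduction: proving that the relations \eqref{relBrauer} are \emph{complete}, i.e.\ that no further relations are needed to bring every monomial into normal form, equivalently that $\dim_{\mathbb{C}}\Br_d(\delta)\le(2d-1)!!$. The well-definedness check and the surjectivity argument are routine, and it is the completeness of this rewriting that is the genuine content — which is why one customarily refers to \cite[Sections 9 and 10]{GW} (or to Brauer's original arguments) for the details.
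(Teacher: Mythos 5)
Your outline is correct and is essentially the standard argument (the paper itself gives no proof and simply refers to \cite[Sections 9 and 10]{GW}, which is precisely the source you end up invoking). The three steps you isolate — checking that the diagrams assigned to $s_i,e_i$ satisfy \eqref{relBrauer} so that $\phi$ is a well-defined homomorphism, proving surjectivity via the normal form $w\,\epsilon_k\,w'$, and closing the dimension gap $\dim A\le(2d-1)!!$ by a rewriting/spanning argument — are exactly the content of the cited reference, and you correctly identify the last step (completeness of the relations) as the genuinely nontrivial one. Two small cautions: the set of loop-producing relations in the well-definedness check is only $e_i^2=\delta e_i$, and in the surjectivity/normal-form step you should be slightly careful that the \emph{through-strand permutation} is an arbitrary bijection between the $d-2k$ free top and $d-2k$ free bottom vertices, so the "minimal-length coset representatives" $w,w'$ must be chosen modulo the stabilizer of the arc pattern, which permutes the $k$ caps among themselves and the $k$ cups among themselves; with that understood the counting $\sum_k\binom{d}{2k}^2(2k-1)!!^2(d-2k)!=(2d-1)!!$ confirms the normal forms are in bijection with $\mathcal{B}[d]$.
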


\begin{remark}{\rm
Generically, the algebra $\Br_d(\delta)$ is semi-simple, but not semi-simple for specific integral values for $\delta$ (dependent on $d$), see \cite{Wenzl}, \cite{Rui} and also \cite{AST}. For a detailed study of the non semi-simple algebras over the complex numbers we refer to \cite[2.2]{ES}.}
\end{remark}

We now define the two actions on the tensor space $V^{\otimes d}$ from Theorem~\ref{thm:iso}. The action of $\mathfrak{osp}(V)$ on $V^{\otimes d}$ is given by the comultiplication 
$$\Delta (X) = X \otimes 1 + 1 \otimes X$$ for $X \in \mathfrak{osp}(V)$, keeping in mind the tensor product rule $$(X \otimes Y)(v \otimes w) = (-1)^{|Y| \cdot |v|}(Xv \otimes Yw).$$
Explicitly we have for homogeneous elements $w_i\in V$ 
\[ 
X.(w_1 {\scriptstyle \otimes} \ldots {\scriptstyle \otimes} w_r) = \sum_{i=1}^r (-1)^{\left( \sum_{j=1}^{i-1}|w_j|\right)|X|} w_1 {\scriptstyle \otimes} \ldots {\scriptstyle \otimes} w_{i-1} {\scriptstyle \otimes} Xw_i {\scriptstyle \otimes} w_{i+1} {\scriptstyle \otimes} \ldots {\scriptstyle \otimes} w_r.
\]

\begin{definition}
\label{eisi}
Define the following linear endomorphisms 
\begin{eqnarray*}
\sigma :\, V \otimes V &\longrightarrow& V \otimes V \text{, via } \sigma (v \otimes w) := (-1)^{|v| \cdot |w|} w \otimes v \text{ and }\\
\tau : \, V \otimes V &\longrightarrow& V \otimes V \text{, via } \tau (v \otimes w) := \left\langle v,w\right\rangle \sum_{i \in I} (-1)^{|v_i|} v_i \otimes v_i^*.
\end{eqnarray*}
of $V \otimes V$ and for each fixed natural number $d\geq 2$ the endomorphisms 
\begin{eqnarray*}
s_i = {\rm id}^{\otimes (i-1)} \otimes \sigma \otimes {\rm id}^{\otimes (d-i-1)}
&\text{and}&e_i = {\rm id}^{\otimes (i-1)} \otimes \tau \otimes {\rm id}^{\otimes (d-i-1)}
\end{eqnarray*}
of $V^{\otimes d}$ for $1 \leq i < d$.
\end{definition}

The following proposition is straight-forward to check.

\begin{proposition}
\label{equi}
The maps $\sigma, \tau$ and $s_i, e_i$ are $\mathfrak{osp}(V)$-equivariant.
\end{proposition}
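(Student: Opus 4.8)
The plan is to verify the intertwining identity $\Delta(X)\circ f = f\circ\Delta(X)$ directly on decomposable tensors, first for $f=\sigma,\tau$ on $V\otimes V$ and then reducing the maps $s_i,e_i$ on $V^{\otimes d}$ to these two cases by inserting identities.

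\textbf{The map $\sigma$.} For homogeneous $v,w\in V$ and $X\in\mathfrak{osp}(V)$ one expands $\sigma(X.(v\otimes w))=\sigma(Xv\otimes w)+(-1)^{|X||v|}\sigma(v\otimes Xw)$ using $\sigma(x\otimes y)=(-1)^{|x||y|}y\otimes x$ together with $|Xv|=|X|+|v|$; a short bookkeeping of signs shows that this equals $(-1)^{|v||w|}(Xw\otimes v+(-1)^{|X||w|}w\otimes Xv)$, and the same expression comes out of $X.(\sigma(v\otimes w))=(-1)^{|v||w|}X.(w\otimes v)$. Invariantly, $\sigma$ is just the symmetry of the tensor category of $\mathfrak{osp}(V)$-supermodules, hence automatically natural.

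\textbf{The map $\tau$.} Write $\gamma:=\sum_{i\in I}(-1)^{|v_i|}v_i\otimes v_i^*\in V\otimes V$ and $\beta:V\otimes V\to\mC$, $\beta(v\otimes w)=\langle v,w\rangle$, so $\tau(u)=\beta(u)\,\gamma$ with $\beta$ an even map. Since $\tau\circ\Delta(X)$ factors through $\beta\circ\Delta(X)$ and $\Delta(X)\circ\tau$ factors through $\Delta(X)(\gamma)$, equivariance of $\tau$ is equivalent to the two assertions $\beta\circ\Delta(X)=0$ and $\Delta(X)(\gamma)=0$. The first is literally the defining relation \eqref{oh}. For the second, expand $Xv_i=\sum_k\langle Xv_i,v_k^*\rangle v_k$ and $Xv_i^*=\sum_k\langle v_k,Xv_i^*\rangle v_k^*$ (using the right dual basis property), note that $\langle Xv_i,v_k^*\rangle\neq 0$ forces $|v_k|=|X|+|v_i|$, and rewrite $\langle v_k,Xv_i^*\rangle=-(-1)^{|X||v_k|}\langle Xv_k,v_i^*\rangle$ via \eqref{oh}. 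Substituting these into $\Delta(X)(\gamma)=\sum_i(-1)^{|v_i|}(Xv_i\otimes v_i^*+(-1)^{|X||v_i|}v_i\otimes Xv_i^*)$, relabelling the summation indices in the second group of terms, and using the parity constraint to collapse the sign exponents, the coefficient of each $v_k\otimes v_i^*$ turns out to vanish identically. (Conceptually, $\gamma$ is the coevaluation for the self-duality of $V$ given by $\langle-,-\rangle$, and its $\mathfrak{osp}(V)$-invariance is the statement dual to that of the evaluation $\beta$; the sign $(-1)^{|v_i|}$ is exactly the one making the zig-zag identities hold over a super base.)

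\textbf{The maps $s_i$ and $e_i$.} Both $\sigma$ and $\tau$ are \emph{even} endomorphisms of $V\otimes V$: for $\tau$ this is because $\langle v,w\rangle\neq 0$ forces $|v|=|w|$ and $\langle v_i,v_i^*\rangle=1$ forces $|v_i|=|v_i^*|$, so $\tau$ annihilates the odd part of $V\otimes V$ and lands in the even part. Consequently $s_i=\mathrm{id}^{\otimes(i-1)}\otimes\sigma\otimes\mathrm{id}^{\otimes(d-i-1)}$ and $e_i=\mathrm{id}^{\otimes(i-1)}\otimes\tau\otimes\mathrm{id}^{\otimes(d-i-1)}$ are even, and applying the explicit formula for the action of $X$ on $w_1\otimes\dots\otimes w_d$, in each summand $X$ either acts on a tensor leg untouched by $\sigma$ (resp. $\tau$) — where it passes the even map with no extra sign — or acts on legs $i,i+1$, where the identity reduces to the $V\otimes V$ case already settled, tensored with identities on the remaining legs. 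Summing over all legs yields $\Delta(X)\circ s_i=s_i\circ\Delta(X)$ and likewise for $e_i$. The only genuinely non-formal step is the identity $\Delta(X)(\gamma)=0$: one has to translate the action of $X$ on the right dual basis $\{v_i^*\}$ into its action on $\{v_i\}$, and this is where the sign analysis using \eqref{oh} and the parity constraint $|v_k|=|X|+|v_i|$ must be carried out with care; everything else is naturality in a symmetric monoidal category with duals.
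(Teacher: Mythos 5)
The paper offers no proof of Proposition~\ref{equi}: it is dismissed in one line as ``straight-forward to check''. Your write-up therefore supplies an argument the authors left implicit, and it is correct. The $\sigma$-computation is a routine Koszul-sign check; the reduction from $s_i,e_i$ on $V^{\otimes d}$ to $\sigma,\tau$ on $V\otimes V$ via the observation that $\sigma$ and $\tau$ are \emph{even} maps (so that they slide past the action of $X$ on the untouched tensor legs with no extra sign) is exactly the point one must make to avoid a mess.

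The conceptual core of your proof, and its nicest feature, is the factorisation $\tau=\gamma\circ\beta$ through the trivial module $\mC$: equivariance is then reduced to the two statements $\beta\circ\Delta(X)=0$, which is literally relation~\eqref{oh}, and $\Delta(X)(\gamma)=0$, i.e.\ that $\gamma=\sum_i(-1)^{|v_i|}v_i\otimes v_i^*$ is an $\mathfrak{osp}(V)$-invariant vector. I verified your sign computation for the latter: writing $Xv_i=\sum_k\langle Xv_i,v_k^*\rangle v_k$ and $Xv_i^*=\sum_k\langle v_k,Xv_i^*\rangle v_k^*$, applying~\eqref{oh} to the second coefficient, relabelling, and using the parity constraint $|v_k|\equiv|X|+|v_i|$ collapses the exponent $|v_k|(1+|X|)+|X||v_i|$ to $|v_i|\pmod 2$, so the coefficient of each $v_k\otimes v_i^*$ cancels. (One small nitpick: you claim equivariance of $\tau$ is \emph{equivalent} to the pair of vanishing conditions, but strictly speaking the factorisation only gives the implication ``both vanish $\Rightarrow$ $\tau$ equivariant'', which is all you need; stating equivalence is a harmless overreach.) Altogether this is a clean, complete verification of what the paper asserts without proof, and the coevaluation viewpoint explains intrinsically why the sign $(-1)^{|v_i|}$ appears in the definition of $\tau$.
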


\begin{theorem} \label{thm:Brauer_action}
Let $\delta = {\rm dim} V_0 - {\rm dim} V_1 $ be the supertrace of $V$. Then there is a right action of ${\rm Br}_d(\delta)$ on $V^{\otimes d}$ given by
\begin{eqnarray*}
v.s_i := s_i(v) &\text{and}& v.e_i = e_i(v)
\end{eqnarray*}
for $v\in V^{\otimes d}$ and $1\leq i\leq d-1$. It commutes with the action of $\mathfrak{osp}(V)$.
\end{theorem}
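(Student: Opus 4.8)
The plan is to verify that the assignment $s_i \mapsto s_i$, $e_i \mapsto e_i$ on generators of $\Br_d(\delta)$ respects the defining relations \eqref{relBrauer}, so that it induces an algebra homomorphism $\Br_d(\delta) \to \END_\mC(V^{\otimes d})$; the commutation with $\mathfrak{osp}(V)$ is then immediate from Proposition~\ref{equi}, since all $s_i, e_i$ are $\mathfrak{osp}(V)$-equivariant and hence so is every word in them. (One must be careful that a \emph{right} action of $\Br_d(\delta)$ corresponds to an algebra \emph{anti}-homomorphism, i.e. we need $(v.x).y = v.(xy)$; since the operators $s_i, e_i$ act on the left as written, the cleanest route is to check that the opposite algebra relations hold, or equivalently to observe that $\Br_d(\delta)$ carries an anti-automorphism fixing the generators, so that left and right actions are interchangeable. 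I would state this explicitly.)

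Because all relations are \emph{local}, involving only the indices $i, i+1, i+2$, it suffices to check them on $V^{\otimes 2}$ and $V^{\otimes 3}$, treating the $\sigma, \tau$ blocks sitting in fixed tensor positions. First I would record the basic identities for the two building blocks on $V^{\otimes 2}$: that $\sigma^2 = \Id$ (clear from the definition, the sign $(-1)^{|v||w|}$ appearing twice), and that $\tau \circ \tau = \delta\, \tau$. For the latter one computes $\tau(\tau(v\otimes w)) = \langle v,w\rangle \sum_i (-1)^{|v_i|} \tau(v_i \otimes v_i^*) = \langle v,w\rangle \sum_i (-1)^{|v_i|} \langle v_i, v_i^*\rangle \sum_j (-1)^{|v_j|} v_j\otimes v_j^*$, and since $\langle v_i, v_i^*\rangle = 1$ and $|v_i^*| = |v_i|$ one gets $\sum_i (-1)^{|v_i|} = \dim V_0 - \dim V_1 = \delta$, giving $\tau^2 = \delta\,\tau$. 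I would also check $\sigma \circ \tau = \tau = \tau \circ \sigma$: applying $\sigma$ to the output of $\tau$ produces $\sum_i (-1)^{|v_i|}(-1)^{|v_i||v_i^*|} v_i^* \otimes v_i = \sum_i (-1)^{|v_i|}(-1)^{|v_i|} v_i^* \otimes v_i$, and re-indexing $i \mapsto$ (dual) together with the defining property of the dual basis returns the same sum; similarly on the other side. These three identities give the first entry of each line of \eqref{relBrauer} (the ones with a single index), while the commutation relations $s_i s_j = s_j s_i$, $e_i e_j = e_j e_i$, $s_i e_j = e_j s_i$ for $|i-j|>1$ are trivial since the blocks act on disjoint tensor factors.

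The remaining relations live on $V^{\otimes 3}$ and are the braid relation $\sigma_1\sigma_2\sigma_1 = \sigma_2\sigma_1\sigma_2$, the "untwisting" relations $e_1 e_2 e_1 = e_1$ and $e_2 e_1 e_2 = e_2$, and the mixed relations $s_1 e_2 e_1 = s_2 e_1$ and $s_2 e_1 e_2 = s_1 e_2$ (here $\sigma_1 = \sigma \otimes \Id$, $\sigma_2 = \Id \otimes \sigma$, and similarly for $e$). The braid relation for $\sigma$ is the standard computation that the graded flip satisfies the Yang--Baxter / symmetric-group relation, the Koszul signs matching up on both sides when one tracks a basis vector $v_a \otimes v_b \otimes v_c$. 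For $e_1 e_2 e_1 = e_1$: applying $e_1$ to $v_a\otimes v_b\otimes v_c$ gives $\langle v_a, v_b\rangle \sum_i (-1)^{|v_i|} v_i \otimes v_i^* \otimes v_c$; then $e_2$ contracts the last two factors, yielding $\langle v_a, v_b\rangle \sum_i (-1)^{|v_i|} \langle v_i^*, v_c\rangle (\ldots)$, and summing over $i$ collapses the dual-basis sum via $\sum_i (-1)^{|v_i|}\langle v_i^*, v_c\rangle v_i = \pm v_c$ up to a sign one must pin down using supersymmetry of the form; a final application of $e_1$ reproduces $e_1(v_a\otimes v_b\otimes v_c)$. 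The mixed relations are checked the same way, with the flip $\sigma$ inserting one extra transposition sign. The main obstacle — really the only place care is needed — is bookkeeping of the signs: the parity signs coming from $\sigma$, the $(-1)^{|v_i|}$ in $\tau$, the relation $|v_i^*| = |v_i|$, the skew-symmetry of $\langle-,-\rangle$ on $V_1$, and the distinction between left and right dual bases (i.e. whether $\sum_i (-1)^{|v_i|} v_i^* \otimes v_i$ equals $\sum_i (-1)^{|v_i|} v_i \otimes v_i^*$, which uses $\langle v_i, v_j^*\rangle = \delta_{ij}$ together with supersymmetry). Once a consistent sign convention is fixed all identities fall out by direct substitution, so I would present one representative computation (say $e_1 e_2 e_1 = e_1$) in full and leave the rest to the reader as "entirely analogous".
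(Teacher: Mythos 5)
Your proposal follows essentially the same route as the paper: by Proposition~\ref{equi} all the maps $s_i,e_i$ are $\mathfrak{osp}(V)$-equivariant (hence so is any word in them, giving the commutation claim), and the only work is to verify the defining relations \eqref{relBrauer} on generators, which by locality reduces to direct calculations on $V^{\otimes 2}$ and $V^{\otimes 3}$. Your expanded sketches (the computation of $\tau^2 = \delta\tau$, the check $\sigma\tau=\tau=\tau\sigma$, and the outline of $e_1e_2e_1=e_1$) are exactly the sort of verifications the paper presents for $s_ie_i=e_i$ and $e_is_i=e_i$ and then delegates to the reader for \eqref{first}--\eqref{second}. One point you raise explicitly that the paper handles silently: since this is a \emph{right} action, one must check the \emph{operator} identities in reversed composition order (equivalently, use the anti-involution of $\Br_d(\delta)$ flipping diagrams, which fixes each $s_i,e_i$); the paper's displayed operator relations in \eqref{first}--\eqref{second} are indeed written with this reversal built in, so it is consistent, but your explicit flag of the issue is a sensible addition.
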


\begin{remark} \label{rem:Sd_action}{\rm
Restricting only to the $s_i$ gives us an action of the symmetric group $S_d$ on $V^{\otimes d}$, which clearly commutes with the action of $\mathfrak{gl}(V)$.  This is the super Schur-Weyl duality,  \cite[Theorem 7.5]{BS5},   originally proved by Sergeev \cite{Sergeev} and Berele-Regev, \cite{BR}.}
\end{remark}

\begin{proof}[Proof of Theorem \ref{thm:Brauer_action}]
By Proposition \ref{equi} all involved morphisms commute with the action of $\mathfrak{osp}(V)$. Hence it remains to show that it defines an action of the Brauer algebra. It is obvious that the $s_i$ define an action of the symmetric group and that the relations $e_ie_j=e_je_i$ are satisfied for $|i-j|>1$. Moreover, $e_i^2 = \delta e_i$ by definition. To  verify the relation $s_i e_i = e_i$ it suffices to assume $d=2$. Let $v,w \in V$ be homogeneous then
\begin{eqnarray*}
(v \otimes w).s_ie_i &=& (-1)^{|v| \cdot |w|} (w \otimes v).e_i =
(-1)^{|v| \cdot |w|} \left\langle w,v \right\rangle \sum_{l \in I}
(-1)^{|v_l|} v_l \otimes v_l^* \\
&=& \left\langle v,w \right\rangle \sum_{l \in I} (-1)^{|v_l|} v_l \otimes
v_l^* = (v \otimes w).e_i.
\end{eqnarray*}
For the equality $e_i = e_i s_i$ we note that $(v_j \otimes v_k^*).e_i =
0$ unless $j=k$, so we assume equality and obtain
\begin{eqnarray*}
(v_j \otimes v_j^*).e_is_i &=& \sum_{l \in I} (-1)^{|v_l|} (v_l \otimes
v_l^*).s_i 
= \sum_{l \in I} (-1)^{|v_l|}(-1)^{|v_l|} v_l^* \otimes v_l\\
&=& \sum_{l \in I} (-1)^{|v_l|} ((-1)^{|v_l|} v_l^*) \otimes v_l
= \sum_{l \in I} (-1)^{|v_l|} v_l \otimes v_l^*,
\end{eqnarray*}
where the final equality holds because the $\left\lbrace (-1)^{|v_l|}
v_l^* \right\rbrace$ also form a basis of $V$ with right dual basis
$\{v_l\}$.

For the final relations
\begin{eqnarray}
e_i e_{i+1} e_i (x)= e_{i}(x) &\text{ and }& e_{i+1} e_{i} e_{i+1}(x) = e_{i+1}(x),\label{first}\\
e_i e_{i+1} s_i (x) = e_{i}s_{i+1}(x) &\text{ and }& e_{i+1} e_{i} s_{i+1} (x) = e_{i+1}s_i(x)\label{second}
\end{eqnarray}
for $x \in V^{\otimes d}$, it suffices to consider the case $d=3$ and $x=u \otimes v \otimes w$ for homogeneous $u,v,w \in V$. The exact calculation is straight-forward and left to the reader.
\end{proof}

\section{The oriented Brauer Category}

We now recall the oriented Brauer category (or sometimes called walled Brauer category, see e.g. \cite{SS}), which has as objects certain orientation sequences and morphism spaces given by oriented generalized Brauer diagrams. We then relate it to the space $V^{\otimes d}$.

We set ${\rm OSeq}[d] = \{\up,\down\}^{\times d}$ and $\widehat{\rm OSeq}[d] = \{\up,\down,\circ\}^{\times d}$ and call its elements \emph{orientations} of length $d$ respectively \emph{generalized orientations} of length $d$. Let $\widehat{\mathcal{B}}[d]$ be the set of \emph{generalized Brauer diagrams} on $2d$ vertices,  that means diagrams which are Brauer diagrams, except that the partitioning is into subsets of cardinality 1 or 2. In other words, we allow vertices that are not connected to any other vertex.

\begin{definition}
\label{defoje}
An \emph{oriented generalized Brauer diagram} is a triple $(\mathbf{t}, b, \mathbf{s})$ where $\mathbf{s},\mathbf{t} \in \widehat{\rm OSeq}[d]$ and $b \in \widehat{\mathcal{B}}[d]$ such that the following conditions hold:
\begin{enumerate}
\item if $\{i,j\} \in b$ with $i \neq j$ then $\{\mathbf{t}_i,\mathbf{t}_j\} = \{\up,\down\}$,
\item if $\{i^*,j^*\} \in b$ with $i \neq j$ then $\{\mathbf{s}_i,\mathbf{s}_j\} = \{\up,\down\}$,
\item if $\{i,j^*\} \in b$ then $\{\mathbf{t}_i,\mathbf{s}_j\} \in \{ \{\up\}, \{\down\}\}$, and
\item if $\{i\} \in b$ or  $\{i^*\} \in b$, then $\mathbf{t}_i = \circ$, respectively $\mathbf{s}_i=\circ$.
\end{enumerate}
The pair $(\mathbf{s},\mathbf{t})$ is then called the \emph{orientation}.
\end{definition}
We denote the set of diagrams oriented with orientation $(\mathbf{s}, \mathbf{t})$ by $\widehat{\mathcal{B}}_{\mathbf{s}}^{\mathbf{t}}$ or by $\widehat{\mathcal{B}}[d]_{\mathbf{s}}^{\mathbf{t}}$ if we want to stress that the length is $d$, and draw them as oriented diagrams (in the obvious sense) with the orientation sequence $\mathbf{s}$ at the bottom and the orientation sequence $\mathbf{t}$ on the top. 

It will be sometimes helpful to vary $d$ and so we set $\widehat{\rm OSeq}=\bigcup_{d'\geq 0}\widehat{\rm OSeq}[d']$ and ${\rm OSeq}=\bigcup_{d'\geq 0}\widehat{\rm OSeq}[d'] $. For $\mathbf{s}\in \widehat{\rm OSeq}$ we  denote by $\overline{\mathbf{s}}\in {\rm OSeq}\subset \widehat{\rm OSeq}$ the associated \emph{reduced sequence} obtained by deleting all $\circ$'s. Similarly define $\overline{b}$ for  $b\in\widehat{\mathcal{B}}[d]$. Note that if  $b\in\widehat{\mathcal{B}}_{\mathbf{s}}^{\mathbf{t}}$ then $\overline{b}\in\widehat{\mathcal{B}}_{\overline{\mathbf{s}}}^{\overline{\mathbf{t}}}$. Moreover, we can allow pairs of sequences $(\mathbf{s},\mathbf{t})$  not of the same length and define the sets $\widehat{\mathcal{B}}_{\mathbf{s}}^{\mathbf{t}}$ and $\widehat{\mathcal{B}}_{\overline{\mathbf{s}}}^{\overline{\mathbf{t}}}$ of (reduced) oriented generalized Brauer diagrams with orientation $(\mathbf{s},\mathbf{t})$ exactly as in Definition \ref{defoje}.  

\begin{example} The first diagram below is an element in $\widehat{\mathcal{B}}[4]_{(\down,\circ,\up,\down)}^{(\down,\up,\down,\circ)}$. The other two diagrams are not oriented. In both cases the pieces highlighted by a dashed line violate the orientation conditions.
\begin{eqnarray*}
\begin{tikzpicture}[scale=0.5,thick,>=angle 90]
\draw [>->] (0,2) to [out=-90,in=-180] +(.5,-.5) to [out=0,in=-90] +(.5,.5);
\draw [<-<] (0,0) to [out=90,in=-180] +(1,1) to [out=0,in=-90] +(1,1);
\draw [>->] (2,0) to [out=90,in=-180] +(.5,.5) to [out=0,in=90] +(.5,-.5);
\draw (1,0) circle(2.5pt);
\draw (3,2) circle(2.5pt);

\begin{scope}[xshift=6cm]
\draw [>-<] (0,2) to [out=-90,in=-180] +(.5,-.5) to [out=0,in=-90]
+(.5,.5);
\draw [dotted] (.5,1.8) ellipse (.9 and .45);
\draw [dotted] (-.35,0) to [out=85,in=-175] +(1,1.2) to [out=5,in=-95] +(1,.8) to [out=90,in=-180] +(.35,.2) to [out=0,in=90] +(.35,-.2) to [out=-95,in=5] +(-1,-1.2) to [out=-175,in=85] +(-1,-0.8) to [out=-90,in=0] +(-.35,-.2) to [out=-180,in=-90] +(-.35,.2);
\draw [<->] (0,0) to [out=90,in=-180] +(1,1) to [out=0,in=-90] +(1,1);
\draw [>->] (2,0) to [out=90,in=-180] +(.5,.5) to [out=0,in=90] +(.5,-.5);
\draw (1,0) circle(2.5pt);
\draw (3,2) circle(2.5pt);
\end{scope}

\begin{scope}[xshift=12cm]
\draw [>->] (0,2) to [out=-90,in=-180] +(.5,-.5) to [out=0,in=-90] +(.5,.5);
\draw [<-<] (0,0) to [out=90,in=-180] +(1,1) to [out=0,in=-90] +(1,1);
\draw [>->] (2,0) to [out=90,in=-180] +(.5,.5) to [out=0,in=90] +(.5,-.5);
\draw [-] (1,0) -- +(0.25,0.25);
\draw [-] (1,0) -- +(-0.25,0.25);
\draw [dotted] (1,.15) circle(0.4);
\draw (3,2) circle(2.5pt);
\end{scope}
\end{tikzpicture}
\end{eqnarray*}
If one removes the highlighted parts of the diagrams above one obtains elements in $\widehat{\mathcal{B}}_{(\circ,\up,\down)}^{(\circ)}$ (for the second diagram) and  $\widehat{\mathcal{B}}_{(\down,\up,\down)}^{(\down,\up,\down,\circ)}$ (for the third diagram). 
\end{example}

\begin{definition} \label{def:walled_Brauer}
Let $d \in \mathbb{Z}_{\geq 0}$ and $\delta \in \mathbb{C}$. We define the \emph{oriented Brauer category} $\mathcal{OB}_d(m-n)$ as the following $\mathbb{C}$-linear category: The set of objects is $\widehat{\rm OSeq}[d]$, the morphism space $\Hom (\mathbf{s},\mathbf{t})$ for $\mathbf{s},\mathbf{t} \in \widehat{\rm OSeq}[d]$ is the vector space with basis $\widehat{\mathcal{B}}[d]_{\mathbf{s}}^{\mathbf{t}}$, and the composition of morphisms 
$$\Hom (\mathbf{s},\mathbf{t}) \times \Hom (\mathbf{r},\mathbf{s})\longrightarrow \Hom (\mathbf{r},\mathbf{t}),$$
is done on basis vectors by putting the two diagrams on top of each other, glueing along the entries of $\mathbf{s}$ and eliminating all internal $\circ$'s and also inner circles, each elimination of an internal circle resulting in multiplying the diagram with a factor of $m-n$. Similarly, define the category $\mathcal{OB}(m-n)$ by allowing as objects sequences of arbitrary finite length.
\end{definition}

\begin{lemma} \label{lem:hom_trivial}
Let $\mathbf{s},\mathbf{t} \in \widehat{\rm OSeq}[d]$ and $\ell = | \{i \mid \mathbf{s}_i = \circ\}| - |\{i \mid \mathbf{t}_i = \circ\}|$. Then $\Hom (\mathbf{s},\mathbf{t})=\{0\}$ if $\ell$ is odd or $\ell$ is even and additionally it holds
$$ \frac{\ell}{2}  \neq |\{i \mid \mathbf{s}_i = \up\}| - |\{i \mid \mathbf{t}_i = \up\}| .$$
\end{lemma}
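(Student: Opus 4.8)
The plan is to reason purely combinatorially about which oriented generalized Brauer diagrams can possibly lie in $\widehat{\mathcal{B}}[d]_{\mathbf{s}}^{\mathbf{t}}$, since a nonzero morphism space means a nonempty such basis. First I would introduce notation for the relevant counts: for an orientation sequence $\mathbf{r}\in\widehat{\rm OSeq}[d]$ write $a(\mathbf{r})=|\{i\mid \mathbf{r}_i=\up\}|$, $b(\mathbf{r})=|\{i\mid \mathbf{r}_i=\down\}|$, and $c(\mathbf{r})=|\{i\mid \mathbf{r}_i=\circ\}|$, so that $a(\mathbf{r})+b(\mathbf{r})+c(\mathbf{r})=d$. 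A basis diagram $b\in\widehat{\mathcal{B}}[d]_{\mathbf{s}}^{\mathbf{t}}$ partitions the $2d$ vertices into blocks of size $1$ and $2$, where the size-$1$ blocks sit exactly at the positions labelled $\circ$ (by condition (4) of Definition~\ref{defoje}), and the three kinds of size-$2$ blocks — "cups" $\{i,j\}$ on the top row, "caps" $\{i^*,j^*\}$ on the bottom row, and "through strands" $\{i,j^*\}$ — are constrained by conditions (1)--(3).

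The key step is a parity/counting argument. Let $p$ be the number of cups, $q$ the number of caps, and $r$ the number of through strands in $b$. Counting top-row vertices gives $2p+r+c(\mathbf{t})=d$ and counting bottom-row vertices gives $2q+r+c(\mathbf{s})=d$; subtracting yields $c(\mathbf{s})-c(\mathbf{t})=2(p-q)$, so $\ell=c(\mathbf{s})-c(\mathbf{t})$ is even — this already rules out odd $\ell$. Next I would count the $\up$-oriented vertices. By condition (1) each cup contributes exactly one $\up$ to the top row and one $\down$; by (3) each through strand is oriented consistently, so it contributes one $\up$ to the top and one $\up$ to the bottom, or one $\down$ to each; and by (2) each cap contributes exactly one $\up$ to the bottom. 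Hence, if $r_\up$ denotes the number of through strands oriented $\up$, we get $a(\mathbf{t})=p+r_\up$ and $a(\mathbf{s})=q+r_\up$, so $a(\mathbf{s})-a(\mathbf{t})=q-p=-(p-q)=-\tfrac{\ell}{2}$. Wait — I must match the sign convention in the statement, which writes $\tfrac{\ell}{2}=a(\mathbf{s})-a(\mathbf{t})$ with $\ell=c(\mathbf{s})-c(\mathbf{t})$; rechecking: from $c(\mathbf{s})-c(\mathbf{t})=2(q-p)$ (recount: bottom has $c(\mathbf{s})=d-2q-r$, top has $c(\mathbf{t})=d-2p-r$, difference $=2p-2q$, hmm) one finds $\ell=2(p-q)$ and $a(\mathbf{s})-a(\mathbf{t})=q-p=-\tfrac{\ell}{2}$; so either the sign in the displayed inequality is a harmless typo or I have the orientation of a through strand backwards, and I would simply track it carefully with one explicit small example to pin down the convention. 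Either way, the conclusion is that $\widehat{\mathcal{B}}[d]_{\mathbf{s}}^{\mathbf{t}}=\emptyset$ unless $\ell$ is even \emph{and} $\tfrac{\ell}{2}=\pm(a(\mathbf{s})-a(\mathbf{t}))$ with the correct sign, which is exactly the contrapositive of the lemma.

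Finally I would assemble these observations: since $\Hom(\mathbf{s},\mathbf{t})$ is by definition the $\mathbb{C}$-span of $\widehat{\mathcal{B}}[d]_{\mathbf{s}}^{\mathbf{t}}$, and we have shown this set is empty whenever $\ell$ is odd, or $\ell$ is even but $\tfrac{\ell}{2}\neq a(\mathbf{s})-a(\mathbf{t})$, the morphism space is the zero space in exactly those cases, proving the lemma. I do not expect any serious obstacle here; the only thing requiring care is bookkeeping — matching the exact sign convention for the orientation of through strands in Definition~\ref{defoje} against the statement — and I would resolve that with a single small diagram rather than belaboring it in prose.
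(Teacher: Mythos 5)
Your counting argument is exactly what the paper's one-line proof (``This follows directly from the definitions.'') leaves implicit, so the approach coincides. Your sign suspicion is also well founded and is not a slip on your part: with $p$ top arcs, $q$ bottom arcs, and $r_\up$ upward through strands, one indeed gets $\ell = 2(p-q)$ and $|\{i\mid\mathbf{s}_i=\up\}|-|\{i\mid\mathbf{t}_i=\up\}| = q-p = -\ell/2$, so nonemptiness of $\widehat{\mathcal{B}}[d]_{\mathbf{s}}^{\mathbf{t}}$ forces $\ell/2 = |\{i\mid\mathbf{t}_i=\up\}|-|\{i\mid\mathbf{s}_i=\up\}|$, with $\mathbf{s}$ and $\mathbf{t}$ interchanged relative to the displayed inequality in the statement. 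A quick check confirms this: for $\mathbf{s}=(\circ,\circ)$ at the bottom and $\mathbf{t}=(\up,\down)$ at the top, the diagram consisting of one oriented cup and two bottom singletons lies in $\widehat{\mathcal{B}}[2]_{\mathbf{s}}^{\mathbf{t}}$, yet $\ell = 2$ while $|\{i\mid\mathbf{s}_i=\up\}|-|\{i\mid\mathbf{t}_i=\up\}|=-1$. The same sign appears in the parallel Lemma~\ref{lem:glmnhom_trivial}; both look like typos in the paper rather than errors in your bookkeeping. Apart from flagging this, your proof is complete and correct.
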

\begin{proof}
This follows directly from the definitions.
\end{proof}

\begin{remark}
\label{Jon}
{\rm We like to stress that our category $\mathcal{OB}(m-n)$ is slightly different from the category called oriented walled Brauer category in \cite{Betal} in the sense that we have more objects, since we also fix the places of the trivial tensor factors. Hence, the oriented Brauer category from \cite{Betal} is the full subcategory of ours, where we only allow objects without $\circ$'s.}
\end{remark}

To relate this category to the endomorphisms of the superspace $V^{\otimes d}$ let $m$ be such that ${\rm dim}(V_0)=2m$ or ${\rm dim}(V_0)=2m+1$.  Unfortunately we have to distinguish between these two cases. Therefore, we will refer to them as the \emph{even case} respectively \emph{odd case}. Fix the sets
\begin{equation*}
I^\up = \{i \mid 1 \leq i \leq m+n\} \text{, } I^\down = \{\overline{i} \mid 1 \leq i \leq m+n\} \text{, } I^\circ = \{0\}, 
 \end{equation*}
and also  $I = I ^\up \cup I^\down$ in the even case and $I = I ^\up \cup I^\down \cup I^\circ$ in the odd case. Denote by $\mid \mid i \mid \mid$ the absolute value for an element in $i\in I$, in particular $\mid \mid \overline{i} \mid \mid = i=\mid \mid {i} \mid \mid $ for $\overline{i} \in I^\down$. To simplify calculations later on we use an explicit realization  of $\mathfrak{osp}(V)$ in terms of matrices, see e.g. \cite{Musson}.

\begin{definition}
\label{osp}
Let $V=\mC^{2m+1|2n}$ be the superspace with basis $v_i$ for $i \in I$ and $| v_i | = 0$ if $\mid \mid i \mid \mid \leq m$ and $| v_i | = 1$ otherwise. To write down matrices we order the basis elements as follows
$$ 0 < \overline{1}< {\scriptstyle \ldots} < \overline{m}<1 < {\scriptstyle \ldots} < m < \overline{m+1} < {\scriptstyle \ldots} < \overline{m+n} < m+1 < {\scriptstyle \ldots} < m+n.$$
We denote by  $\mathfrak{osp}(2m+1|2n)\subset \mathfrak{gl}(V)$ the Lie supersubalgebra given by 
\begin{equation*}
\renewcommand*\arraystretch{1.3} 
\Set{\left(\begin{array}{ccc|cc}
0 & -u_1^T & -u_2^T & x_1 & x_2 \\
u_2 & -A^T & a_1 & C^T & z_1 \\
u_1 & a_2 & A & z_2 & B \\
\hline
-x_2^T & -B^T & -z_1^T & -D^T & d_1\\ [1ex]
x_1^T & z_2^T & C & d_2 & D
\end{array}\right) | \begin{array}{l}
A,a_1,a_2 \text{ are } m \times m \text{ matrices }\\
D,d_1,d_2 \text{ are } n \times n \text{ matrices }\\
u_1,u_2 \text{ are } m \times 1 \text{ matrices }\\
x_1,x_2 \text{ are } 1 \times n \text{ matrices }\\
B,z_1,z_2 \text{ are } m \times n \text{ matrices }\\
C \text{ is a } n \times m \text{ matrix}\\
a_1,a_2 \text{ are skew-symmetric}\\
d_1,d_2 \text{ are symmetric}
\end{array}}.
\end{equation*}
We denote by $\mathfrak{osp}(2m|2n)$  the Lie superalgebra produced in the same way but using the superspace $\mC^{2m|2n}$ obtained by omitting the basis vector $v_0$ instead of $V$, and then using the matrices obtained by deleting the first row and column in the description of $\mathfrak{osp}(2m+1|2n)$.
\end{definition}

Definition \ref{osp} is in the odd case consistent with the previous definition of  $\mathfrak{osp}(V)$ if we take the bilinear form $\left\langle -,- \right\rangle$ given by the following block matrix where $\mathbbm{1}_k$ denotes the identity matrix of size $k\times k$:
$$
J= \left(\begin{array}{ccc|cc}
1 & 0 & 0 & 0 & 0\\
0 & 0 & \mathbbm{1}_m & 0 & 0\\
0 & \mathbbm{1}_m & 0 & 0 & 0\\
\hline
0 & 0 & 0 & 0 & -\mathbbm{1}_n \\
0 & 0 & 0 & \mathbbm{1}_n & 0 \\
\end{array}\right),
$$
In the even case one has to delete the first row and column. 

To relate the action of $\mathfrak{osp}(2m+1|2n)$ on $V^{\otimes d}$ to the oriented Brauer category we will restrict the action to the general linear Lie supersubalgebra $\mathfrak{gl}(m|n)$. The embedding $\iota: \mathfrak{gl}(m|n) \hookrightarrow \mathfrak{osp}(2m+1|2n)$ is obtained by only allowing non-zero entries in the matrices $A$, $B$, $C$, and $D$ in the notations above; similarly in the even case. These are all embeddings as Lie super subalgebras obtained by omitting one vertex in the Dynkin diagram (highlighted by a dotted frame).  With respect to the distinguished root system for the Cartan subalgebra consisting of diagonal matrices, see \cite[A.2.1]{Z}, this is given as follows:
\begin{equation*}
\begin{tikzpicture}[scale = 0.8, thick]
\node [anchor=west] at (-4,0) {$\mathfrak{osp}(2m+1|2n)$};
\node [anchor=west] at (-3.8,-.5) {$\scriptstyle (m > 0)$};

\node at (0,0) {$\ocircle$};
\draw (0.2,0) to +(.5,0);
\draw [dotted] (0.7,0) to +(.6,0);
\draw (1.3,0) to +(.5,0);
\node at (2,0) {$\ocircle$};
\draw (2.2,0) to +(.6,0);
\node at (3,0) {$\otimes$};
\draw (3.2,0) to +(.6,0);
\node at (4,0) {$\ocircle$};
\draw (4.2,0) to +(.5,0);
\draw [dotted] (4.7,0) to +(.6,0);
\draw (5.3,0) to +(.5,0);
\node at (6,0) {$\ocircle$};
\draw (6.2,0) to +(.6,0);
\node at (7,0) {$\ocircle$};
\draw (7.2,.05) to +(.6,0);
\draw (7.7,0) to +(-.2,.2);
\draw (7.7,0) to +(-.2,-.2);
\draw (7.2,-.05) to +(.6,0);
\draw [dotted] (7.65,.35) rectangle +(.7,-.7);
\node at (8,0) {$\ocircle$};
\draw [thin] (-.2,-.1) to +(0,-.2) to +(2.4,-.2) to +(2.4,0);
\node at (1,-.5) {$\scriptstyle m-1$};
\draw [thin] (3.8,-.1) to +(0,-.2) to +(3.4,-.2) to +(3.4,0);
\node at (5.5,-.5) {$\scriptstyle n-1$};

\begin{scope}[yshift=-1.2cm]
\node [anchor=west] at (-4,0) {$\mathfrak{osp}(1|2n)$};

\node at (4,0) {$\ocircle$};
\draw (4.2,0) to +(.5,0);
\draw [dotted] (4.7,0) to +(.6,0);
\draw (5.3,0) to +(.5,0);
\node at (6,0) {$\ocircle$};
\draw (6.2,0) to +(.6,0);
\node at (7,0) {$\ocircle$};
\draw (7.2,.05) to +(.6,0);
\draw (7.7,0) to +(-.2,.2);
\draw (7.7,0) to +(-.2,-.2);
\draw (7.2,-.05) to +(.6,0);
\draw [dotted] (7.65,.35) rectangle +(.7,-.7);
\node at (8,0) {$\newmoon$};
\draw [thin] (3.8,-.1) to +(0,-.2) to +(3.4,-.2) to +(3.4,0);
\node at (5.5,-.5) {$\scriptstyle n-1$};
\end{scope}

\begin{scope}[yshift=-2.7cm]
\node [anchor=west] at (-4,0) {$\mathfrak{osp}(2m|2n)$};
\node [anchor=west] at (-3.8,-.5) {$\scriptstyle (m > 1)$};

\node at (0,0) {$\ocircle$};
\draw (0.2,0) to +(.5,0);
\draw [dotted] (0.7,0) to +(.6,0);
\draw (1.3,0) to +(.5,0);
\node at (2,0) {$\ocircle$};
\draw (2.2,0) to +(.6,0);
\node at (3,0) {$\otimes$};
\draw (3.2,0) to +(.6,0);
\node at (4,0) {$\ocircle$};
\draw (4.2,0) to +(.5,0);
\draw [dotted] (4.7,0) to +(.6,0);
\draw (5.3,0) to +(.5,0);
\node at (6,0) {$\ocircle$};
\draw (6.12,.12) to +(.36,.36);
\draw (6.12,-.12) to +(.36,-.36);
\node at (6.6,.6) {$\ocircle$};
\node at (6.6,-.6) {$\ocircle$};
\draw [dotted] (6.25,.95) rectangle +(.7,-.7);
\draw [thin] (-.2,-.1) to +(0,-.2) to +(2.4,-.2) to +(2.4,0);
\node at (1,-.5) {$\scriptstyle m-1$};
\draw [thin] (3.8,-.1) to +(0,-.2) to +(2.2,-.2) to +(2.8,-.8) to +(3.0,-.8) to +(3.0,-.6);
\node at (5.5,-.5) {$\scriptstyle n-1$};
\end{scope}

\begin{scope}[yshift=-4cm]
\node [anchor=west] at (-4,0) {$\mathfrak{osp}(2|2n)$};

\node at (3,0) {$\otimes$};
\draw (3.2,0) to +(.6,0);
\node at (4,0) {$\ocircle$};
\draw (4.2,0) to +(.5,0);
\draw [dotted] (4.7,0) to +(.6,0);
\draw (5.3,0) to +(.5,0);
\node at (6,0) {$\ocircle$};
\draw (6.2,0) to +(.6,0);
\node at (7,0) {$\ocircle$};
\draw (7.2,.05) to +(.6,0);
\draw (7.3,0) to +(.2,.2);
\draw (7.3,0) to +(.2,-.2);
\draw (7.2,-.05) to +(.6,0);
\draw [dotted] (7.65,.35) rectangle +(.7,-.7);
\node at (8,0) {$\ocircle$};
\draw [thin] (3.8,-.1) to +(0,-.2) to +(3.4,-.2) to +(3.4,0);
\node at (5.5,-.5) {$\scriptstyle n-1$};
\end{scope}
\end{tikzpicture}
\end{equation*}
In the (omitted) classical cases of the orthogonal and symplectic Lie algebra the embedding is the obvious one.

Then the restriction of $V$ to the Lie super subalgebra is given by the followings lemma.

\begin{lemma}
As a $\mathfrak{gl}(m|n)$ module $V=W_\up \oplus W_\down \oplus W_\circ$ in the odd case and $V=W_\up \oplus W_\down$ in the even case, with
$$W_\up = {\rm span}\{v_i \mid i \in I^\up\}, \, W_\down = {\rm span} \{v_i \mid i \in I^\down\} \text{, and } W_\circ = {\rm span} \{v_0\}.$$
Moreover, $W_\up$ is isomorphic to the natural representation of $\mathfrak{gl}(m|n)$, $W_\down$ to its dual and $W_\circ$ to the trivial representation.
\end{lemma}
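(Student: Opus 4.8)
The plan is to read the statement off directly from the explicit matrix realization of Definition~\ref{osp} together with the description of the embedding $\iota\colon\mathfrak{gl}(m|n)\hookrightarrow\mathfrak{osp}(2m+1|2n)$ (and its even-case analogue). First I would record what $\iota(X)$ looks like as a matrix: writing $X\in\mathfrak{gl}(m|n)$ as a block matrix $\left(\begin{smallmatrix}A&B\\ C&D\end{smallmatrix}\right)$ with $A$ of size $m\times m$, $D$ of size $n\times n$, and $B,C$ the off-diagonal blocks, the element $\iota(X)$ is obtained from the displayed block matrix for $\mathfrak{osp}(2m+1|2n)$ by setting $u_1,u_2,x_1,x_2,a_1,a_2,d_1,d_2,z_1,z_2$ all equal to zero and keeping only $A,B,C,D$ (deleting the first row and column in the even case). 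In particular, in the odd case the row and column through $v_0$ become zero, so $\iota(\mathfrak{gl}(m|n))$ annihilates $v_0$; this is the trivial summand $W_\circ$.

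Next I would observe, purely from the block shape of $\iota(X)$, that the spans $W_\up=\mathrm{span}\{v_i:i\in I^\up\}$ and $W_\down=\mathrm{span}\{v_i:i\in I^\down\}$ are each $\mathfrak{gl}(m|n)$-stable and that there is no mixing among $W_\up$, $W_\down$, $W_\circ$: the rows and columns indexed by $I^\up=\{1,\dots,m\}\cup\{m+1,\dots,m+n\}$ only feed back into positions indexed by $I^\up$, and similarly for $I^\down$. Hence $V=W_\up\oplus W_\down\oplus W_\circ$ in the odd case and $V=W_\up\oplus W_\down$ in the even case, as $\mathfrak{gl}(m|n)$-modules. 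To identify $W_\up$ I would order its basis $v_1,\dots,v_m,v_{m+1},\dots,v_{m+n}$, even part first — consistent with the parities fixed in Definition~\ref{osp} — and read off that $\iota(X)$ acts on $W_\up$ by the matrix $\left(\begin{smallmatrix}A&B\\ C&D\end{smallmatrix}\right)$ itself; hence $W_\up\cong\mathbb{C}^{m|n}$ is the natural module.

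The remaining point is $W_\down\cong(W_\up)^*$, and here I would offer two routes. Directly, in the basis $v_{\overline 1},\dots,v_{\overline m},v_{\overline{m+1}},\dots,v_{\overline{m+n}}$ the operator $\iota(X)|_{W_\down}$ has matrix $\left(\begin{smallmatrix}-A^T&C^T\\ -B^T&-D^T\end{smallmatrix}\right)$, which one recognizes as minus the supertranspose of $\left(\begin{smallmatrix}A&B\\ C&D\end{smallmatrix}\right)$, i.e.\ as the action on the dual of the natural module. Alternatively, and more robustly against sign conventions, I would use the invariant form: from the block matrix $J$ one checks that $\langle-,-\rangle$ restricts to a non-degenerate, parity-preserving pairing between $W_\down$ and $W_\up$ (placing the $W_\down$-vector in the first slot, and recalling $\langle v_i, v_i^*\rangle=1$ with $v_i^*\in W_\down$), that it vanishes on $W_\up\times W_\up$, on $W_\down\times W_\down$, and between $W_\circ$ and $W_\up\oplus W_\down$, and that it is $\mathfrak{gl}(m|n)$-invariant because $\iota(\mathfrak{gl}(m|n))\subseteq\mathfrak{osp}(V)$ preserves $\langle-,-\rangle$; the relation \eqref{oh} then unwinds to the defining identity of the dual representation, giving $W_\down\liso(W_\up)^*$. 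The classical cases $n=0$ (orthogonal) and $m=0$ (symplectic), where $\iota$ is the evident inclusion, are covered verbatim by the same computation. I expect the only real work to be bookkeeping — keeping the block indices in the right order and tracking the sign conventions for the supertranspose and the dual action — rather than anything conceptual.
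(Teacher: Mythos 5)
Your proposal is correct and is essentially the same argument as the paper's: read off the block action from the matrix realization of Definition~\ref{osp}, note that $\iota(\mathfrak{gl}(m|n))$ preserves each of $W_\up$, $W_\down$, $W_\circ$, that it acts on $W_\up$ by the defining block $\left(\begin{smallmatrix}A&B\\ C&D\end{smallmatrix}\right)$, and that on $W_\down$ one gets minus the supertranspose, i.e.\ the dual action (the paper packages this last step as the formula $E_{i,j}w_k=-\delta_{i,k}(-1)^{(|i|+|j|)|i|}w_j$ for the dual module and the map $w_i\mapsto v_{\overline{i}}$). Your alternative route via the $\mathfrak{osp}$-invariance of $\langle-,-\rangle$ is a slightly more conceptual way to get $W_\down\cong W_\up^*$ but leads to the same conclusion.
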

\begin{proof}
Clearly, $W_\up$ is isomorphic to the natural representation of $\mathfrak{gl}(m|n)$. The dual of the natural representation can be described in an appropriate basis $w_i$ by $ E_{i,j} w_k = - \delta_{i,k} (-1)^{(|i|+|j|)|i|} w_j,$
and so it is isomorphic to $W_\down$ via  $w_i\mapsto v_{\overline{i}}.$ The statement for $W_\circ$ is evident.
\end{proof}

More generally we obtain the following as a direct consequence:
\begin{corollary}
The space $V^{\otimes d}$ decomposes as a $\mathfrak{gl}(m|n)$-module as
\begin{equation} \label{glmn_decomposition}
V^{\otimes d} \cong \bigoplus_{\mathbf{s} \in \widehat{\rm OSeq}[d]} W_{\mathbf{s}_1} \otimes \ldots \otimes W_{\mathbf{s}_d}.
\end{equation}
\end{corollary}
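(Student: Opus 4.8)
The plan is to obtain the decomposition directly from the preceding lemma, by distributing the tensor product over the direct sum and checking that each resulting summand is a $\mathfrak{gl}(m|n)$-submodule.

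First I would record the uniform set-up: by the lemma $V = \bigoplus_{x} W_x$, where $x$ runs over $\{\up,\down,\circ\}$ in the odd case and over $\{\up,\down\}$ in the even case, each $W_x$ being a $\mathfrak{gl}(m|n)$-submodule of $V$ (namely the natural module, its dual, and the trivial module, respectively). To treat both cases at once I would adopt the convention $W_\circ = \{0\}$ in the even case, so that all statements can be indexed over $\widehat{\rm OSeq}[d] = \{\up,\down,\circ\}^{\times d}$.

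Next I would use two standard facts about modules over a Lie superalgebra $\mathfrak{g}$: the action on a tensor product $M \otimes N$ given by $\Delta(X) = X \otimes 1 + 1 \otimes X$ (with the Koszul sign rule built into the tensor product) restricts to any $M' \otimes N'$ with $M' \subseteq M$, $N' \subseteq N$ submodules; and $- \otimes N$ is additive, so $\bigl(\bigoplus_j M_j\bigr) \otimes N \cong \bigoplus_j (M_j \otimes N)$ as $\mathfrak{g}$-modules. Applying the first fact to $\mathfrak{g} = \mathfrak{gl}(m|n)$ and the second to $M = V = \bigoplus_x W_x$, a straightforward induction on $d$ yields $V^{\otimes d} \cong \bigoplus W_{\mathbf{s}_1} \otimes \cdots \otimes W_{\mathbf{s}_d}$, the sum running over all $d$-tuples $\mathbf{s} = (\mathbf{s}_1,\ldots,\mathbf{s}_d)$ of symbols from the relevant alphabet. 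In the odd case this alphabet is exactly $\widehat{\rm OSeq}[d]$; in the even case the tuples containing a $\circ$ contribute zero summands under our convention, so the sum again ranges effectively over $\widehat{\rm OSeq}[d]$. This is the asserted isomorphism \eqref{glmn_decomposition}.

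There is essentially no obstacle here: the only points deserving a word of care are that the super sign conventions do not disturb the underlying direct-sum decomposition, and the uniform handling of the even and odd cases, which the convention $W_\circ = \{0\}$ takes care of.
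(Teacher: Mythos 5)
Your argument is correct and is exactly the intended one: the paper presents the corollary as a direct consequence of the preceding lemma without writing out the distribution of the tensor product over the direct sum, which is all that is needed. Your convention $W_\circ = \{0\}$ in the even case is a sensible way to make the uniform indexing over $\widehat{\rm OSeq}[d]$ literally true there (the paper itself quietly switches to ${\rm OSeq}[d]$ in the even case in the surrounding discussion), and the rest of the argument — submodules tensor to submodules, $-\otimes N$ is additive, induct on $d$ — is the standard and correct justification.
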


We abbreviate $W_\mathbf{s}=W_{\mathbf{s}_1} \otimes \ldots \otimes W_{\mathbf{s}_d}$ and define for each $\mathbf{s} \in \widehat{\rm OSeq}[d]$ in the odd case (respectively each $\mathbf{s} \in{\rm OSeq}[d]$  in the even case) the set of $\mathbf{s}$-indices $${\rm Vect}(\mathbf{s}) = \{ \underline{i} = (i_1,\ldots,i_d) \mid i_j \in I^{\mathbf{s}_j} \}.$$
Obviously this set labels the standard basis vectors in the summand corresponding to $\mathbf{s}$ in the decomposition (\ref{glmn_decomposition}) with $v_{\underline{i}} = v_{i_1} \otimes \ldots \otimes v_{i_d}$. 

\begin{lemma} \label{lem:iso_modules}
Let $\mathbf{s},\mathbf{t} \in \widehat{\rm OSeq}[d]$ or $\mathbf{s},\mathbf{t} \in{\rm OSeq}[d]$ be two sequences, such that there exists $\sigma \in S_d$ with $\mathbf{s}_{i} = \mathbf{t}_{\sigma (i)}$. Then $\sigma$ induces an isomorphism
\begin{eqnarray}
\label{Rmatrix}
\psi_\sigma:\quad W_{\mathbf{s}} \longrightarrow W_{\mathbf{t}},
\end{eqnarray}
of $\mathfrak{gl}(m|n)$-modules via the action of $S_d$ given in Remark \ref{rem:Sd_action}. Choosing $\sigma$ to be of minimal length defines a distinguished isomorphism $\psi_\sigma$.
\end{lemma}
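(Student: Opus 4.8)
The plan is to realize $\psi_\sigma$ as the restriction, to the summand $W_{\mathbf{s}}$ of the decomposition \eqref{glmn_decomposition}, of the operator by which $\sigma$ acts on $V^{\otimes d}$ in the sense of Remark \ref{rem:Sd_action}, and to check that this restriction maps onto $W_{\mathbf{t}}$. Two facts about the $S_d$-action are used. First, by Theorem \ref{thm:Brauer_action} the operators $s_i$ satisfy $s_i^2=1$ and the braid relations, so they assemble into a genuine action of $S_d$; hence the operator attached to $\sigma\in S_d$ is well defined independently of the chosen reduced expression, and it is invertible, with inverse the operator of $\sigma^{-1}$. Second, again by Theorem \ref{thm:Brauer_action}, this $S_d$-action commutes with the action of $\mathfrak{gl}(V)$, hence with that of the subalgebra $\iota(\mathfrak{gl}(m|n))\subset\mathfrak{gl}(V)$, so every $\sigma$ acts as an automorphism of $V^{\otimes d}$ as a $\mathfrak{gl}(m|n)$-module.

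Next I would analyse how $\sigma$ interacts with \eqref{glmn_decomposition}, first for a generator $s_k$. On a standard basis vector $v_{\underline i}=v_{i_1}\otimes\cdots\otimes v_{i_d}$ of $W_{\mathbf{s}}$ (so $i_j\in I^{\mathbf{s}_j}$), the operator $s_k$ transposes the $k$-th and $(k+1)$-st tensor legs up to the sign $(-1)^{|v_{i_k}||v_{i_{k+1}}|}$ coming from Definition \ref{eisi}; the result is a scalar multiple of a standard basis vector lying in the summand $W_{\mathbf{s}'}$, where $\mathbf{s}'$ arises from $\mathbf{s}$ by swapping the entries in positions $k$ and $k+1$. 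Composing along any word for $\sigma$ then shows that $\sigma$ carries $W_{\mathbf{s}}$ bijectively onto $W_{\mathbf{r}}$, where $\mathbf{r}$ is the sequence determined by $\mathbf{r}_{\sigma(i)}=\mathbf{s}_i$ for all $i$; by the hypothesis $\mathbf{s}_i=\mathbf{t}_{\sigma(i)}$ this forces $\mathbf{r}=\mathbf{t}$. Consequently $\psi_\sigma:=\sigma|_{W_{\mathbf{s}}}$ is a $\mathfrak{gl}(m|n)$-linear bijection $W_{\mathbf{s}}\to W_{\mathbf{t}}$, i.e.\ an isomorphism of $\mathfrak{gl}(m|n)$-modules, with inverse $\psi_{\sigma^{-1}}$ (note that $\sigma^{-1}$ satisfies the corresponding condition $\mathbf{t}_j=\mathbf{s}_{\sigma^{-1}(j)}$).

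For the distinguished choice, the set of $\sigma\in S_d$ with $\mathbf{s}_i=\mathbf{t}_{\sigma(i)}$ for all $i$ is exactly a left coset of the stabiliser of $\mathbf{t}$ in $S_d$ — the Young-type subgroup permuting positions within each of the three fibres $\mathbf{t}^{-1}(\up),\mathbf{t}^{-1}(\down),\mathbf{t}^{-1}(\circ)$ — and such a coset contains a unique element of minimal length, namely the $\sigma$ whose restriction to each fibre of $\mathbf{s}$ is the order-preserving bijection onto the corresponding fibre of $\mathbf{t}$; any other valid $\sigma$ admits an explicit length-reducing transposition inside one of the fibres. Taking this $\sigma$ yields the distinguished $\psi_\sigma$. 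I do not expect a real obstacle here: the content is just that the symmetric-group action permutes the summands of \eqref{glmn_decomposition} in the evident combinatorial fashion. The only points to keep straight are the direction of the index shift ($\sigma$ versus $\sigma^{-1}$, pinned down by the conventions for the $S_d$-action) and the Koszul signs coming from the swap $v\otimes w\mapsto(-1)^{|v||w|}w\otimes v$; since we assert only the existence of an isomorphism, these signs do not affect the argument.
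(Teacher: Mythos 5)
Your argument is correct and follows essentially the same route as the paper's (very terse) proof: restrict the $S_d$-action on $V^{\otimes d}$, which commutes with $\mathfrak{gl}(V)$ and hence with $\iota(\mathfrak{gl}(m|n))$, to the summand $W_{\mathbf{s}}$ and observe that it lands isomorphically in $W_{\mathbf{t}}$. You simply spell out the details (the permutation of summands under each $s_k$, and the coset/minimal-length argument for the distinguished choice) that the paper leaves implicit.
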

\begin{proof}
The action of the symmetric group on $V^{\otimes d}$ even commutes with $\mathfrak{gl}(V)$ and hence gives a morphism, and then clearly an isomorphism, of $\mathfrak{gl}(m|n)$-modules when restricted to the subspaces.
\end{proof}
As a consequence we have canonical isomorphisms of algebras $$\op{End}_{\mathfrak{gl}(m|n)}(W_{\mathbf{s}})\cong \op{End}_{\mathfrak{gl}(m|n)}(W_{\mathbf{t}})$$ for each pair ${\mathbf{s}}$, ${\mathbf{t}}$ as above, namely  given by conjugation with  $\psi_\sigma$, where $\sigma$ is the unique choice of minimal length. Note that these endomorphism rings were described in detail in \cite{BS5}. In case $V$ has large enough dimension (in the sense that $d<(m+1)(n+1)$) and $\mathbf{t}=(\up,\ldots,\up,\down,\ldots,\down,\circ,\ldots,\circ)$, where the symbols $\up$ and $\down$ appear exactly $r$ respectively $s$ times, this endomorphism ring is exactly the \emph{walled Brauer algebra} from \cite[Theorem 7.8]{BS5} originally introduced \cite{Tu},  \cite{Kosuda} and studied in \cite{Ni}. 
\begin{lemma} \label{lem:glmnhom_trivial}
Let $\mathbf{s},\mathbf{t} \in \widehat{\rm OSeq}[d]$ and $\ell = | \{i \mid \mathbf{s}_i = \circ\}| - |\{i \mid \mathbf{t}_i = \circ\}|$. Then ${\rm Hom}_{\mathfrak{gl}(m\mid n)}(W_{\mathbf{s}},W_{\mathbf{t}})=\{0\}$ if $\ell$ is odd or $\ell$ is even and then additionally 
\begin{eqnarray}
\label{zero}
  \frac{\ell}{2} &\neq& |\{i \mid \mathbf{s}_i = \up\}| - |\{i \mid \mathbf{t}_i = \up\}| .
  \end{eqnarray}
\end{lemma}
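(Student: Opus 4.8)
The plan is to separate $W_{\mathbf{s}}$ and $W_{\mathbf{t}}$ by a central-character argument; this is the module-theoretic shadow of the orientation/flow constraint that makes Lemma~\ref{lem:hom_trivial} true. By the previous lemma every tensor factor of $W_{\mathbf{s}}=W_{\mathbf{s}_1}\otimes\cdots\otimes W_{\mathbf{s}_d}$ is (a copy of) the natural representation $W_{\up}$, its dual $W_{\down}$, or the trivial module $W_{\circ}$, so it suffices to understand how the one-dimensional centre of $\mathfrak{gl}(m|n)$ acts on each of these three building blocks and to pass to tensor products.

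Concretely, I would first observe that the identity matrix $\mathbbm{1}\in\mathfrak{gl}(m|n)$ is even and central, so, since it acts by a scalar on each of $W_{\up}$, $W_{\down}$, $W_{\circ}$, it acts by a scalar on every $W_{\mathbf{s}}$, additively over tensor factors via the coproduct $\Delta(\mathbbm{1})=\mathbbm{1}\otimes 1+1\otimes\mathbbm{1}$. On $W_{\up}$ it acts by $1$, on $W_{\down}$ by $-1$ (this uses the super sign in the displayed formula for the dual action, so that $\sum_i E_{i,i}$ acts by $-1$), and on $W_{\circ}$ by $0$. Hence $\mathbbm{1}$ acts on $W_{\mathbf{s}}$ by the scalar $c(\mathbf{s}):=|\{i\mid \mathbf{s}_i=\up\}|-|\{i\mid \mathbf{s}_i=\down\}|$ and on $W_{\mathbf{t}}$ by $c(\mathbf{t})$. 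Any $\mathfrak{gl}(m|n)$-module homomorphism intertwines the action of $\mathbbm{1}$, so $\Hom_{\mathfrak{gl}(m|n)}(W_{\mathbf{s}},W_{\mathbf{t}})=\{0\}$ as soon as $c(\mathbf{s})\neq c(\mathbf{t})$.

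It then remains to check that the hypotheses of the lemma force $c(\mathbf{s})\neq c(\mathbf{t})$, which is an elementary count parallel to Lemma~\ref{lem:hom_trivial}. Writing $a_{\mathbf{s}},b_{\mathbf{s}},c_{\mathbf{s}}$ for the numbers of $\up$'s, $\down$'s and $\circ$'s in $\mathbf{s}$, and likewise for $\mathbf{t}$, one has $a_{\mathbf{s}}+b_{\mathbf{s}}+c_{\mathbf{s}}=d=a_{\mathbf{t}}+b_{\mathbf{t}}+c_{\mathbf{t}}$, hence $(a_{\mathbf{s}}+b_{\mathbf{s}})-(a_{\mathbf{t}}+b_{\mathbf{t}})=c_{\mathbf{t}}-c_{\mathbf{s}}=-\ell$; adding the equation $c(\mathbf{s})=c(\mathbf{t})$, i.e. $(a_{\mathbf{s}}-b_{\mathbf{s}})-(a_{\mathbf{t}}-b_{\mathbf{t}})=0$, yields $2(a_{\mathbf{s}}-a_{\mathbf{t}})=-\ell$. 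Thus $c(\mathbf{s})=c(\mathbf{t})$ can occur only when $\ell$ is even and $\ell/2=a_{\mathbf{t}}-a_{\mathbf{s}}=|\{i\mid\mathbf{t}_i=\up\}|-|\{i\mid\mathbf{s}_i=\up\}|$, that is, exactly in the case excluded by \eqref{zero}; the contrapositive is the assertion of the lemma. I do not anticipate any genuine obstacle here: the only mildly delicate point is the super sign showing that $\mathbbm{1}$ acts by $-1$ rather than $+1$ on $W_{\down}$, and this is already encoded in the preceding lemma. (Alternatively one could deduce the statement a posteriori from Lemma~\ref{lem:hom_trivial} once the oriented Brauer category has been made to act on $V^{\otimes d}$, but the direct argument above is self-contained and avoids the forward reference.)
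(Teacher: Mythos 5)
Your central-character argument is mathematically correct and is, at bottom, the same idea as the paper's proof: the ``height'' ${\rm ht}(\lambda)$ used there is precisely the eigenvalue of the even central element $\mathbbm{1}=\sum_i E_{i,i}\in\mathfrak{gl}(m|n)$, so ``all weights of $W_{\mathbf{s}}$ have height $c(\mathbf{s})$'' is exactly your statement that $\mathbbm{1}$ acts by the scalar $c(\mathbf{s})$. Your phrasing is cleaner, though: the paper splits into the $\ell$ odd case (a parity-of-heights argument) and the $\ell$ even case (an adjointness reduction to $\Hom(W_\up^{\otimes a},W_\up^{\otimes b})$), whereas a single comparison of $c(\mathbf{s})$ with $c(\mathbf{t})$ handles both uniformly, since $c(\mathbf{s})-c(\mathbf{t})\equiv\ell\pmod 2$ is automatic. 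One thing worth flagging: your count gives $c(\mathbf{s})=c(\mathbf{t})$ precisely when $\ell$ is even and $\ell/2=a_{\mathbf{t}}-a_{\mathbf{s}}$, whereas \eqref{zero} as printed excludes $\ell/2=a_{\mathbf{s}}-a_{\mathbf{t}}$, i.e.\ the opposite sign. Your sign is the correct one --- with $\ell=c_{\mathbf{s}}-c_{\mathbf{t}}$ as defined, taking $\mathbf{s}=(\circ,\circ)$ and $\mathbf{t}=(\up,\down)$ gives $\ell=2$ and $a_{\mathbf{s}}-a_{\mathbf{t}}=-1\neq 1=\ell/2$, so the lemma as literally stated would force $\Hom_{\mathfrak{gl}(m|n)}(W_\circ\otimes W_\circ, W_\up\otimes W_\down)=0$, which is false (the coevaluation is a nonzero morphism). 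So \eqref{zero} (and the identical condition in Lemma~\ref{lem:hom_trivial}) carries a sign typo, and ``exactly in the case excluded by \eqref{zero}'' in your write-up should be amended to say ``the case excluded by the corrected version of \eqref{zero}, $\ell/2\neq a_{\mathbf{t}}-a_{\mathbf{s}}$.'' Modulo that editorial point, the proof is complete and sound.
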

\begin{proof}
Let ${\rm ht}(\lambda)$ denote the height of a $\mathfrak{gl}(m|n)$
weight, i.e. the sum of all coefficients written with respect to the
standard $\{ \varepsilon_i \}$-basis. In this basis the weights occurring
in $W_\up$ are $\varepsilon_1, \ldots, \varepsilon_{m+n}$, those in
$W_{\down}$ are $-\varepsilon_1, \ldots, -\varepsilon_{m+n}$, and in
$W_{\circ}$ only the weight $0$ occurs. If $\ell$ is odd, then the
height of the weights occurring in $W_\mathbf{s}$ will either all be even or all be odd, while in $W_\mathbf{t}$ it will be the other way around, thus
there can't be a non-trivial $\mathfrak{gl}(m|n)$-morphism. 

If $\ell$ is even and \eqref{zero} holds define
$\mathbf{s}_\up = |\{i \mid \mathbf{s}_i = \up\}|$ and analogously
$\mathbf{s}_\down$, $\mathbf{t}_\up$, and $\mathbf{t}_\down$. Assume now
$\mathbf{s}_\up - \mathbf{t}_\up > \ell/2$, then it follows that
$\mathbf{s}_\down - \mathbf{t}_\down < \ell/2$. By adjointness and
Lemma \ref{lem:iso_modules} we know that
$${\rm Hom}_{\mathfrak{gl}(m\mid n)}(W_{\mathbf{s}},W_{\mathbf{t}})\cong {\rm
Hom}_{\mathfrak{gl}(m\mid n)}(W_\up^{\otimes (\mathbf{s}_\up +
\mathbf{t}_\down)},W_\up^{\otimes (\mathbf{t}_\up + \mathbf{s}_\down)}).$$
Then $\mathbf{s}_\up + \mathbf{t}_\down < \mathbf{t}_\up + \mathbf{t}_\down
- \ell/2 < \mathbf{t}_\up + \mathbf{s}_\down$, and so the height of any
weight appearing in the domain is strictly less than the height of those appearing in
the codomain and so all $\mathfrak{gl}(m|n)$ morphism are trivial. The
remaining case $\mathbf{s}_\up - \mathbf{t}_\up < \ell/2$ is done in exactly the
same way.
\end{proof}

To extend the action from Theorem \ref{thm:Brauer_action} to the oriented Brauer category we need the notion of a weight of an oriented Brauer diagram:

\begin{definition}
Let $\mathbf{s},\mathbf{t} \in \widehat{\rm OSeq}$. Assume $b \in \widehat{\mathcal{B}}[d]_{\mathbf{s}}^{\mathbf{t}}$ and $\underline{i} \in {\rm Vect}(\mathbf{s})$ and $\underline{j} \in {\rm Vect}(\mathbf{t})$. We denote by $b^{\underline{j}}_{\underline{i}}$ the diagram $b$ with the vertices at the bottom labelled by the elements $i_1,\ldots,i_d$ and the vertices on the top by $j_1,\ldots,j_d$ (read always from left to right). We say that $b^{\underline{j}}_{\underline{i}}$ is \emph{consistently labelled} if the labelling function $\mid \mid \op{L}(-) \mid \mid$ defined by $\op{L}(k)=j_k$ and $\op{L}(k^*)=i_k$, for $1 \leq k \leq d$,  is constant on the subsets of the partition $b$. 
\end{definition}
\begin{example}
\label{labelled}
The first of the following diagrams is consistently labelled,  the second and third are not (the highlighted parts are violating the conditions): 
\begin{equation}
\label{Ex21}
\begin{tikzpicture}[scale=0.5,thick,>=angle 90]
\draw [>->] (0,2) node[above] {$\overline{1}$} to [out=-90,in=-180]
+(.5,-.5) to [out=0,in=-90] +(.5,.5) node[above] {$1$};
\draw [<-<] (0,0) node[below] {$\overline{2}$} to [out=90,in=-180] +(1,1)
to [out=0,in=-90] +(1,1) node[above] {$\overline{2}$};
\draw [>->] (2,0) node[below] {$3$} to [out=90,in=-180] +(.5,.5) to
[out=0,in=90] +(.5,-.5) node[below] {$\overline{3}$} ;
\draw (1,0) node[below] {$0$} circle(2.5pt);
\draw (3,2) node[above] {$0$} circle(2.5pt);

\begin{scope}[xshift=5cm]
\draw [dotted] (-.3,2.9) rectangle +(1.6,-1.6);
\draw [>->] (0,2) node[above] {$1$} to [out=-90,in=-180] +(.5,-.5) to
[out=0,in=-90] +(.5,.5) node[above] {$1$};
\draw [<-<] (0,0) node[below] {$\overline{2}$} to [out=90,in=-180] +(1,1)
to [out=0,in=-90] +(1,1) node[above] {$\overline{3}$};
\draw [>->] (2,0) node[below] {$3$} to [out=90,in=-180] +(.5,.5) to
[out=0,in=90] +(.5,-.5) node[below] {$\overline{3}$} ;
\draw (1,0) node[below] {$0$} circle(2.5pt);
\draw (3,2) node[above] {$0$} circle(2.5pt);
\end{scope}

\begin{scope}[xshift=10cm]
\draw [dotted] (-.3,2.9) rectangle +(1.6,-1.6);
\draw [>->] (0,2) node[above] {$1$} to [out=-90,in=-180] +(.5,-.5) to
[out=0,in=-90] +(.5,.5) node[above] {$1$};
\draw [<-<] (0,0) node[below] {$\overline{2}$} to [out=90,in=-180] +(1,1)
to [out=0,in=-90] +(1,1) node[above] {$\overline{2}$};
\draw [>->] (2,0) node[below] {$3$} to [out=90,in=-180] +(.5,.5) to
[out=0,in=90] +(.5,-.5) node[below] {$\overline{3}$} ;

\draw [dotted] (.7,.3) rectangle +(.6,-1.3);
\draw (1,0) node[below] {$2$} circle(2.5pt);
\draw (3,2) node[above] {$0$} circle(2.5pt);
\end{scope}
\end{tikzpicture}
\end{equation}
\end{example}

\begin{definition}
The \emph{weight} ${\rm wt}\left(b^{\underline{j}}_{\underline{i}}\right)$ of a labelled oriented diagram $b^{\underline{j}}_{\underline{i}}$ is defined as follows:
$${\rm wt}\left(b^{\underline{j}}_{\underline{i}}\right) := \left\lbrace
\begin{array}{ll}
 \prod_{c} (-1)^{\mid c \mid} \prod_{h}(-1) \prod_{h'} (-1),& \text{if }{}b^{\underline{j}}_{\underline{i}} \text{ is consistently labelled,}\\
0, & \text{otherwise,}
\end{array}\right.,$$
where, viewed as diagrams,
\begin{enumerate}
\item the first product is over all crossings $c$ of two different strands with the notation $| c | = |i| \cdot |j|$ if the two strands are labelled by $i$ and $j$;
\item the second product is over all labelled clockwise caps $h$ with large labels, i.e. oriented horizontal strands at the bottom of the diagram with left endpoint oriented $\up$ and with labelling set $\{a, \overline{a}\}$ such that $\mid\mid a\mid\mid>m$;
\item the third product is over all labelled anticlockwise cups $h'$ with large labels, i.e. oriented horizontal strands at the top of the diagram with left endpoint oriented $\down$ and with labelling set $\{a, \overline{a}\}$ such that $\mid\mid a\mid\mid>m$.
\end{enumerate}
\end{definition}
\begin{remark}
\label{rotatedcross}
\rm{Note that the weight of a diagram depends on $V$. For instance the first diagram in \eqref{Ex21} has weight $0$ if $m\geq 3$, has weight $-1$ if $m=1,2$ and has weight $1$ if $m=0$.  Note that the weight of a crossing is by definition independent of the four possible orientations:
\begin{equation}
\begin{tikzpicture}[scale=0.7,thick,>=angle 90]
\draw (-.8,.75) node {$t_1=$};
\draw [<-<] (0,0) to [out=90,in=-135] +(.5,.75) to [out=45,in=-90] +(.5,.75);
\draw [<-<] (1,0) to [out=90,in=-45] +(-.5,.75) to [out=135,in=-90]
+(-.5,.75);

\begin{scope}[xshift=3cm]
\draw (-.8,.75) node {$t_2=$};
\draw [<-<] (0,0) to [out=90,in=-135] +(.5,.75) to [out=45,in=-90] +(.5,.75);
\draw [>->] (1,0) to [out=90,in=-45] +(-.5,.75) to [out=135,in=-90]
+(-.5,.75);
\end{scope}

\begin{scope}[xshift=6cm]
\draw (-.8,.75) node {$t_3=$};
\draw [>->] (0,0) to [out=90,in=-135] +(.5,.75) to [out=45,in=-90] +(.5,.75);
\draw [<-<] (1,0) to [out=90,in=-45] +(-.5,.75) to [out=135,in=-90]
+(-.5,.75);
\end{scope}

\begin{scope}[xshift=9cm]
\draw (-.8,.75) node {$t_4=$};
\draw [>->] (0,0) to [out=90,in=-135] +(.5,.75) to [out=45,in=-90] +(.5,.75);
\draw [>->] (1,0) to [out=90,in=-45] +(-.5,.75) to [out=135,in=-90]
+(-.5,.75);
\end{scope}
\end{tikzpicture}
\end{equation}
On the other hand, we can ``rotate'' each crossing by applying a cup and a cap on top of the diagram and compute then its weight. But in any case the weights of the added cup and cap multiply to $1$, and so the weights don't change. In fact, we will see in the proof of Theorem \ref{firstmain} that the weight is an invariant on equivalence classes of oriented labelled diagrams.}
\end{remark}
By a \emph{representation} of $\mathcal{OB}_d(m-n)$ we mean a linear functor $F$ from $\mathcal{OB}_d(m-n)$ to the category of complex vector spaces. Equivalently we say $\mathcal{OB}_d(m-n)$  acts on $X:=\bigoplus_{\widehat{\rm OSeq}[d]}F(\mathbf{s})$. 
If $X$ is moreover an $R$-module for some ring $R$, then the action of $\mathcal{OB}_d(m-n)$  \emph{commutes with the action of $R$} if $F(f)(r.x)=r.F(f)(x)$ for any $x\in X, r\in R$ and morphism $f$ in $\mathcal{OB}_d(m-n)$.   

\begin{definition}
For $\mathbf{s},\mathbf{t} \in \widehat{\rm OSeq}$ and $b \in \widehat{\mathcal{B}}_{\mathbf{s}}^{\mathbf{t}}$ we define the linear map
\begin{eqnarray}
F(b):\quad W_\mathbf{s}&\longrightarrow&W_{\mathbf{t}}\nonumber \text{, via } F(b)(v_{\underline{i}}) := \sum_{\underline{j} \in {\rm Vect}(\mathbf{t})} {\rm wt}\left(b^{\underline{j}}_{\underline{i}}\right) v_{\underline{j}}.\label{Ff}
\end{eqnarray}
on basis vectors $v_{\underline{i}}$, where  $\underline{i} \in {\rm Vect}(\mathbf{s})$.
\end{definition}

\begin{theorem}
\label{firstmain}
Let $\delta$ be the supertrace of $V$. The assignment
\begin{eqnarray*}
\mathbf{s}\mapsto F(\mathbf{s}):=W_{\mathbf{s}}&\text{and}&f\mapsto F(f),
\end{eqnarray*}
on objects respectively morphisms defines a functor from $\mathcal{OB}(m-n)$ to the category of finite dimensional $\mathfrak{gl}(m|n)$-modules. 

It restricts to an action of the oriented Brauer category $\mathcal{OB}_d(m-n)$ on $V^{\otimes d}$ which commutes with the action of $\mathfrak{gl}(m|n)$. The identity of the object $\mathbf{s}\in \widehat{\rm OSeq}[d]$ acts by projecting onto the summand $W_{\mathbf{s}}$. 
\end{theorem}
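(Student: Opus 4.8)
The plan is to verify that $F$ respects identities and composition of morphisms, and then check compatibility with the $\mathfrak{gl}(m|n)$-action. First I would observe that the identity morphism of $\mathbf{s}$ is the oriented diagram whose underlying Brauer diagram connects each $i$ with $i^*$ (matching orientations) and marks the $\circ$-positions; for this diagram $b^{\underline{j}}_{\underline{i}}$ is consistently labelled precisely when $\underline{i}=\underline{j}$, in which case the weight is an empty product equal to $1$ (there are no crossings and no caps/cups). Hence $F(\mathrm{id}_{\mathbf{s}})=\mathrm{id}_{W_{\mathbf{s}}}$, and on the full direct sum $X=\bigoplus_{\mathbf{s}}W_{\mathbf{s}}$ the identity of $\mathbf{s}$ acts as the projection onto $W_{\mathbf{s}}$, as claimed.

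The main work is functoriality $F(b'\circ b)=F(b')\circ F(b)$ for composable oriented diagrams $b\in\widehat{\mathcal B}_{\mathbf r}^{\mathbf s}$, $b'\in\widehat{\mathcal B}_{\mathbf s}^{\mathbf t}$. Unwinding definitions, for $\underline i\in\mathrm{Vect}(\mathbf r)$ and $\underline k\in\mathrm{Vect}(\mathbf t)$ the coefficient of $v_{\underline k}$ in $F(b')F(b)(v_{\underline i})$ is $\sum_{\underline j\in\mathrm{Vect}(\mathbf s)}\mathrm{wt}(b^{\underline j}_{\underline i})\,\mathrm{wt}(b'^{\underline k}_{\underline j})$. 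A labelling of the bottom of $b'\circ b$ by $\underline i$ and top by $\underline k$ extends to a consistent labelling of the stacked diagram if and only if there is a (necessarily unique, up to the values on the removed internal loops) choice of $\underline j$ making both $b^{\underline j}_{\underline i}$ and $b'^{\underline k}_{\underline j}$ consistently labelled; summing over the free labels on each removed internal loop contributes exactly $\sum_{a\in I}(-1)^{|a|}=\dim V_0-\dim V_1=m-n$ per loop (here I use that $|a|$ is $0$ for $\|a\|\le m$ and $1$ otherwise, and that a loop with an odd number of sign-carrying crossings would be inconsistently labelled, contributing $0$ — one must check the sign bookkeeping for loops carefully). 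It then remains to match the product of the weight factors. The crossing factors multiply because every crossing of $b'\circ b$ is a crossing of $b$ or of $b'$; the only subtlety is that strands which are ``straight'' in $b$ and $b'$ but get connected through a cup-cap pair across the gluing line can produce a crossing in the composite — here I would invoke the observation from Remark \ref{rotatedcross} that rotating a crossing through a cup and a cap leaves the total weight unchanged (the cup/cap signs cancel), together with the fact that the large-label cap/cup signs are precisely designed so that a cap of $b$ glued to a cup of $b'$ (forming a straight strand, or a strand that gets resolved) contributes a net factor $1$. This compatibility is what makes the two products of $(-1)$'s over large caps/cups in the weight match up under composition, and it is the heart of the argument; I expect this sign-reconciliation — showing the cap/cup weight conventions are ``coherent'' under gluing and under the Reidemeister-type move that straightens a glued cap-cup — to be the main obstacle.

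Granting functoriality, the restriction statement is immediate: $V^{\otimes d}\cong\bigoplus_{\mathbf s\in\widehat{\mathrm{OSeq}}[d]}W_{\mathbf s}$ by the Corollary, so $\mathcal{OB}_d(m-n)$ acts on $V^{\otimes d}$ via $F$. Finiteness of the modules $W_{\mathbf s}$ is clear since $V$ is finite-dimensional. Finally, to see the action commutes with $\mathfrak{gl}(m|n)$, I would check on generators, i.e. that for a single oriented diagram $b$ the map $F(b)\colon W_{\mathbf s}\to W_{\mathbf t}$ is a $\mathfrak{gl}(m|n)$-module homomorphism. By the standard categorical argument (a monoidal generation of $\mathcal{OB}(m-n)$ by crossings, cups and caps) it suffices to treat $\sigma$, and a single cup and cap; for $\sigma$ this is the super Schur--Weyl statement of Remark \ref{rem:Sd_action} (equivariance for all of $\mathfrak{gl}(V)\supset\mathfrak{gl}(m|n)$), while for the cup $W_\up\otimes W_\down\to W_\circ$ (evaluation) and cap $W_\circ\to W_\up\otimes W_\down$ (coevaluation, with the sign twist recorded in the weight) one checks directly that these are the canonical $\mathfrak{gl}(m|n)$-equivariant maps between the natural module, its dual, and the trivial module — using the explicit dual-basis formula $E_{i,j}w_k=-\delta_{i,k}(-1)^{(|i|+|j|)|i|}w_j$ from the Lemma above. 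This is a short computation analogous to (indeed, simpler than) the verification of Theorem \ref{thm:Brauer_action}, so I would state it and leave the routine check to the reader.
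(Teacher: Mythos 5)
Your route is genuinely different from the paper's, and it contains a real gap. You try to prove functoriality $F(b'\circ b)=F(b')\circ F(b)$ directly on arbitrary composable diagrams by matching the coefficient of each $v_{\underline k}$ as a sum over intermediate labellings $\underline j$. This forces you to show, for a general stacked pair of diagrams, that the product of the large-label cap/cup signs and the crossing signs coming from $b$ and $b'$ agrees with the corresponding product for any diagram equivalent to $b'\circ b$ after loop removal, isotopy, and straightening of glued cap--cup pairs. You explicitly flag this ``sign-reconciliation'' as the heart of the argument and the main obstacle, but you do not carry it out; you gesture at Remark \ref{rotatedcross} and at a ``Reidemeister-type move'' without a proof. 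Likewise, the claim that each removed internal loop contributes exactly $m-n$ is only obviously correct for a bare loop; for a loop carrying several crossings, caps and cups you would still need the same unproved coherence of signs. So as written the central step of your functoriality argument is an acknowledged placeholder, not a proof. Your equivariance argument has a related circularity: you want to reduce to checking $\sigma$, one cup, and one cap, but that reduction already presupposes a presentation of the category by these generators, which is precisely what the direct approach was supposed to avoid.

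The paper sidesteps all of this. It first observes that $\circ$'s are inert (a morphism and its $\circ$-free reduction induce the same map up to restricting to the summand $W_{\mathbf s}$), so one may work with reduced sequences, i.e.\ inside the oriented Brauer category of \cite{Betal}. That category has a known presentation by the labelled cup, cap and crossing subject to relations (1.4)--(1.9) of \cite{Betal}, and the whole problem collapses to checking a finite list of local relations on weights: kinks have weight $1$, crossings square to the identity, the braid relation is a commutativity of signs, the crossing weight is orientation-independent (Remark \ref{rotatedcross}), and the closed loop contributes $\delta=m-n$ because exactly $m$ labellings have weight $+1$ and $n$ have weight $-1$. This replaces your open-ended global sign-coherence problem with a short finite verification, and it simultaneously settles functoriality and the $\mathfrak{gl}(m|n)$-equivariance (the generating morphisms are visibly equivariant). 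If you want to keep your direct approach, you would need to actually prove the coherence of the cap/cup sign convention under gluing, which in practice amounts to re-deriving the relations of \cite{Betal}.
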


\begin{proof} 
By definition, the identity of the object $\mathbf{s}$ acts by projecting onto the summand $W_{\mathbf{s}}$. To see that the action is well-defined note that two diagrams $D,D\in\widehat{\mathcal{B}}_{\mathbf{s}}^{\mathbf{t}}$ are the same if and only if their reductions $\overline{D}, \overline{D'}$ are the same. Moreover,  $F(D)=F(D')$ if and only if $F(\overline{D})=F(\overline{D'})$.  Finally assume $D_1\in'\widehat{\mathcal{B}}_{\mathbf{s}}^{\mathbf{t}}, D_2\in'
\widehat{\mathcal{B}}_{\mathbf{t}}^{\mathbf{u}}$, and $D'\in\widehat{\mathcal{B}}_{\mathbf{s}}^{\mathbf{u}}$ then $F(D_2\cdot D_1)=F(D')$ if  $F(\overline{D_2}\cdot \overline{D_1})=F(\overline{D'})$
Hence, to see that the action is well-defined and compatible with multiplication it is enough to restrict to reduced sequences. By Remark \ref{Jon} it suffices then to show that the oriented Brauer category in the sense of \cite{Betal} acts. Luckily in this case we have explicit (monoidal) generators and relations for the morphisms, namely the generators

\begin{equation}
\begin{tikzpicture}[scale=0.7,thick,>=angle 90]
\draw (-.8,.5) node {$c=$};
\draw [-] (0,1) to [out=-90,in=-180] +(.5,-.5);
\draw [<-] (.5,.5) to [out=0,in=-90] +(.5,.5);

\begin{scope}[xshift=3cm]
\draw (-.8,.5) node {$d=$};
\draw [-] (0,0) to [out=90,in=-180] +(.5,.5);
\draw [<-] (.5,.5) to [out=0,in=90] +(.5,-.5);
\end{scope}

\begin{scope}[xshift=6cm]
\draw (-.8,.5) node {$s=$};
\draw [->] (0,0) to [out=90,in=-135] +(.5,.75) to [out=45,in=-90] +(.5,.75);
\draw [->] (1,0) to [out=90,in=-45] +(-.5,.75) to [out=135,in=-90]
+(-.5,.75);
\end{scope}
\end{tikzpicture}
\end{equation}
with all possible labellings and the relations (1.4)-(1.9) in \cite{Betal}. We only need to keep track of the weights: The first two of these relations amount to the fact that a consistently oriented and labelled kink built from a cup and cap has weight $1$, the third just requires that $(-1)^{|i||j|}(-1)^{|i||j|}=1$ for any labels $i,j$, the fourth just requires $$(-1)^{|i||j|}(-1)^{|i||k|}(-1)^{|j||k|}= (-1)^{|j||k|}(-1)^{|i||k|}(-1)^{|i||j|}$$ and the relation \cite[(1.8)]{Betal} amounts to the fact that the weight of a crossing does not depend on the orientation, but only on the labels, see Remark \ref{rotatedcross}. Finally for \cite[(1.9)]{Betal} it is enough to see that the sum of the weights of the diagram $d\cdot t_3\cdot c$ over all labellings equals the supertrace $\delta$. But the labellings contributing weight $1$ are precisely those with absolute value at most $m$, hence there are precisely $m$. The other $n$ possible labelling always create a cup of weight one and a cap of weight $-1$ or vice versa. Hence, the total weight is $\delta=m-n$. Hence we obtain an action when restricted to reduced sequences and therefore the claim follows.
\end{proof}

\section{The Isomorphism Theorem}

In this section we prove Theorem~ \ref{thm:iso} from the introduction. The main step is to establish a commuting diagram of the form
\begin{eqnarray}
\begin{xy}
  \xymatrix{
  {\rm End}_{\mathfrak{osp}(m'|2n)}\left(V^{\otimes d} \right)
\ar@{^{(}->}[rr]& &{\rm End}_{\mathfrak{gl}(m|n)}\left(
\bigoplus_{\mathbf{s}} W_\mathbf{s}\right) \\
  {\rm Br}_d(\delta) \ar@{->}[rr]^{\Phi}  \ar@{-->}[u]^{\Psi}
\ar[urr]& &{\rm End}_{{\rm Mat}(\mathcal{OB}_d(m-n))}\left(
\bigoplus_{\mathbf{s}} \mathbf{s}\right) \ar[u]_{\Theta}
  }
\end{xy}
\label{THEdiagram}
\end{eqnarray}
where $\delta$ is the supertrace of $V$, $m'=2m+1$ or $m'=2m$ and $\Phi$ a map constructed from the action of $\mathcal{OB}(m-n)$ from Theorem \ref{firstmain}. For this we view the algebra ${\rm Br}_d(\delta)$ as a category with one object, called $\star$, and endomorphism ring ${\rm Br}_d(\delta)$, and construct a functor into ${\rm Mat}(\mathcal{OB}(m-n))$, the additive closure of $\mathcal{OB}(m-n)$.

\begin{definition}
The category ${\rm Mat}(\mathcal{OB}(m-n))$ is defined as follows: The objects of ${\rm Mat}(\mathcal{OB}(m-n))$ are formal finite direct sums of objects in $\mathcal{OB}(m-n)$, and the morphisms are matrices of morphisms between the summands with addition and composition given by the usual rules of matrix multiplication, see e.g. \cite{BarNatan} for more details.
\end{definition}

The idea is now to construct a functor which sends the one object of ${\rm Br}_d(\delta)$ to the direct sum of all $\mathbf{s} \in \widehat{\rm OSeq}[d]$ and a Brauer diagram $b$ to a matrix with rows and columns indexed by $\widehat{\rm OSeq}[d]$. It will be convenient to write a matrix $A$ as $\sum_{\mathbf{s},\mathbf{t}}1_{\mathbf{t}} A 1_{\mathbf{s}}$. Diagrammatically we will write the morphisms as formal sums of oriented diagrams, where the orientation sequences stand for the matrix idempotents, see e.g. \eqref{Psisi}.

Due to the fact that we also have the "place holder" symbols $\circ$ in the sequences we need the following additional notion: 

\begin{definition}
Let $b,b' \in \widehat{\mathcal{B}}[d]$ be generalized Brauer diagrams.Then $b' \in \widehat{\mathcal{B}}[d]$ is called a  \emph{subdiagram} of $b \in \mathcal{B}[d]$, denoted $b' \lhd b$, if $b'$ refines the partition of $b$. In other words, the diagram $b'$ is obtained from $b$ by removing some arcs with their orientations and replace them with the appropriate number of $\circ$'s. 
\end{definition}


This allows us to define the functor from ${\rm Br}_d(\delta)$ to ${\rm Mat}(\mathcal{OB}(m-n))$.
\begin{proposition} \label{prop:Brauer_embedded}
The assignment 
\begin{eqnarray*}
\Psi:\quad\star \mapsto \bigoplus_{\mathbf{s} \in \widehat{\rm OSeq}[d]} \mathbf{s} \text{ and } b \mapsto M(b),
\end{eqnarray*}
with $M(b)_{(\mathbf{t},\mathbf{s})} \in Hom_{\mathcal{OB}(m-n)}(\mathbf{s},\mathbf{t})$ equal to the unique subdiagram of $b$ in $\widehat{\mathcal{B}}[d]_\mathbf{s}^\mathbf{t}$ or zero if such a diagram does not exist, defines a faithful functor from ${\rm Br}_d(\delta)$ to ${\rm Mat}(\mathcal{OB}(m-n))$.
\end{proposition}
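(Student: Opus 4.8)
The plan is to show that $\Psi$ is a well-defined functor and then that it is faithful. For well-definedness, I first need to check that $M(b)$ is an honest morphism in $\mathrm{Mat}(\mathcal{OB}(m-n))$, i.e. that for each pair $(\mathbf{s},\mathbf{t})$ there is \emph{at most one} subdiagram of $b$ lying in $\widehat{\mathcal{B}}[d]_{\mathbf{s}}^{\mathbf{t}}$. This is a purely combinatorial observation: a subdiagram $b'\lhd b$ is obtained from $b$ by choosing a subset $S$ of the arcs of $b$ to keep and replacing the endpoints of all the others by $\circ$'s; requiring $b'\in\widehat{\mathcal{B}}[d]_{\mathbf{s}}^{\mathbf{t}}$ forces (via condition (4) of Definition~\ref{defoje}) that a vertex carries $\circ$ iff its arc was deleted, so $S$ is exactly the set of arcs of $b$ both of whose endpoints have a non-$\circ$ label in $(\mathbf{s},\mathbf{t})$; this determines $b'$ uniquely \emph{if it is orientable at all}, and otherwise we set the entry to zero. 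Thus $M(b)$ is a well-defined matrix of morphisms.

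Next I would verify functoriality, i.e. $M(1)=\mathrm{id}$ and $M(b_2 b_1)=M(b_2)M(b_1)$ where $b_2 b_1=\delta^{c(b_1,b_2)}\,b_2\circ b_1$ is the product in $\mathrm{Br}_d(\delta)$. The identity statement is immediate: the only subdiagram of the unit diagram lying in $\widehat{\mathcal{B}}[d]_{\mathbf{s}}^{\mathbf{t}}$ is nonzero exactly when $\mathbf{s}=\mathbf{t}$, in which case it is the identity morphism of $\mathbf{s}$ in $\mathcal{OB}(m-n)$, and this is precisely the identity matrix. For composition, I fix $\mathbf{s},\mathbf{u}\in\widehat{\rm OSeq}[d]$ and compute the $(\mathbf{u},\mathbf{s})$-entry of $M(b_2)M(b_1)$, which is $\sum_{\mathbf{t}} M(b_2)_{(\mathbf{u},\mathbf{t})}\circ M(b_1)_{(\mathbf{t},\mathbf{s})}$, a sum over the intermediate orientation $\mathbf{t}$. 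Each nonzero term is a composition in $\mathcal{OB}(m-n)$: stack the unique oriented subdiagram of $b_1$ with source $\mathbf{s}$ and target $\mathbf{t}$ on top of the unique oriented subdiagram of $b_2$ with source $\mathbf{t}$ and target $\mathbf{u}$, delete internal $\circ$'s, and pick up a factor $m-n$ per internal circle. The key point is a bijection: the summands over $\mathbf{t}$ that both survive and, after gluing, actually have underlying diagram a subdiagram of $b_2\circ b_1$ with source $\mathbf{s}$ and target $\mathbf{u}$, are in natural correspondence with the arcs of $b_2\circ b_1$ that are kept when passing to that subdiagram, plus — crucially — the internal circles of $b_2\circ b_1$ that get removed during concatenation, each of which corresponds to a pair of choices ($\circ$ versus an oriented arc) on an internal loop and contributes an extra scalar $m-n$. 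Matching these scalars against $\delta^{c(b_1,b_2)}=\delta^{c(b_1,b_2)}$ with $\delta=m-n$ gives $M(b_2)M(b_1)=M(\delta^{c(b_1,b_2)} b_2\circ b_1)=M(b_2 b_1)$.

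Finally, faithfulness. Since $\mathrm{Br}_d(\delta)$ has one object, I must show the linear map $b\mapsto M(b)$ on $\mathrm{Br}_d(\delta)=\mathrm{span}_{\mathbb{C}}\mathcal{B}[d]$ is injective. But each $b\in\mathcal{B}[d]$ is itself a legal oriented-diagram \emph{up to orientation}: for the orientation $(\mathbf{s},\mathbf{t})$ in which every vertex is labelled $\up$ or $\down$ and the arcs of $b$ are consistently oriented (such a pair exists — orient each arc, this is always possible for a Brauer diagram since every component is a single arc), the entry $M(b)_{(\mathbf{t},\mathbf{s})}$ is $b$ with that orientation, a basis element of $\mathrm{Hom}_{\mathcal{OB}(m-n)}(\mathbf{s},\mathbf{t})$, whereas for any $b''\neq b$ in $\mathcal{B}[d]$ the corresponding entry $M(b'')_{(\mathbf{t},\mathbf{s})}$ is either zero or an oriented diagram with underlying Brauer diagram $b''\neq b$, hence a different basis vector. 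So distinct basis diagrams are sent to morphisms whose matrices have a distinguished entry lying in distinct basis vectors of the relevant $\mathrm{Hom}$-space, giving linear independence of $\{M(b)\mid b\in\mathcal{B}[d]\}$ and thus faithfulness. I expect the main obstacle to be the bookkeeping in the composition step — specifically, correctly matching the number of internal circles removed in $\mathrm{Br}_d(\delta)$ with the number of scalar factors $m-n$ produced when summing the $\mathcal{OB}(m-n)$-compositions over all intermediate generalized orientations $\mathbf{t}$, since an internal loop of $b_2\circ b_1$ can be "cut" into $\circ$'s at several places and one must see that exactly the right multiplicity of $m-n$ emerges.
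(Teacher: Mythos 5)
Your approach is genuinely different from the paper's: you prove multiplicativity $M(b_2b_1)=M(b_2)M(b_1)$ directly for arbitrary basis diagrams, whereas the paper uses the presentation \eqref{relBrauer} and checks only the generator relations $s_i^2=1$, $e_i^2=\delta e_i$, the braid relations, $s_ie_i=e_i$, $e_ie_{i\pm1}e_i=e_i$, etc. Either route works in principle, and yours is conceptually cleaner because it avoids verifying the awkward mixed relations. Your well-definedness and faithfulness arguments are essentially the ones the paper implicitly invokes.

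However, there is a genuine error in your composition step. You assert that each internal circle ``contributes an extra scalar $m-n$'' and then match this against $\delta^{c(b_1,b_2)}$ ``with $\delta=m-n$.'' But $\delta$ is \emph{not} $m-n$: it is the supertrace of $V$, namely $\delta=2m-2n$ in the even case and $\delta=2m+1-2n$ in the odd case, while $m-n$ is the parameter of the oriented Brauer category. The correct bookkeeping is the following. Any internal loop $L$ of $b_2\circ b_1$ lives entirely on the intermediate level; consistency of the orientation (condition (4) in Definition~\ref{defoje}, which forces both endpoints of a removed arc to be $\circ$) shows that the middle vertices of $L$ are either \emph{all} $\circ$ or \emph{none} $\circ$ --- one cannot ``cut'' a loop at a single place, contrary to the worry you raise at the end. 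If none is $\circ$, the loop admits exactly two consistent orientations, each producing one internal circle and hence one factor $m-n$ upon composition. In the odd case there is additionally the option that all middle vertices of $L$ are $\circ$, contributing a factor $1$. Thus each loop contributes $2(m-n)$ (even case) or $2(m-n)+1$ (odd case), which is exactly $\delta$, and the identity $M(b_2)M(b_1)=\delta^{c(b_1,b_2)}\,M(b_2\circ b_1)$ follows. This is the same count the paper performs for $e_i^2=\delta e_i$: a clockwise circle, an anticlockwise circle, and (in the odd case) the original diagram itself.

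A minor further imprecision: the surviving intermediate orientations $\mathbf t$ are in bijection with the set of \emph{choices on loops} (a product of three-element sets in the odd case, two-element sets in the even case), not with ``arcs plus circles'' as you write; the orientations of all chains of $b_2\circ b_1$ reaching the top or bottom are already forced by $\mathbf s$ and $\mathbf u$.
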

\begin{proof}
Clearly, the required subdiagram of $b$ is unique in case it exists. That the functor is faithful is obvious by definition. To see that the functor is well-defined it remains to verify the relations \eqref{relBrauer}. The element $s_i$ is sent to the sum of all oriented crossings at place $i,i+1$ plus its subdiagrams obtained by removing one or two strands in the crossing, for instance for $d=2$:
\begin{equation}
\label{Psisi}.
\begin{tikzpicture}[scale=0.6,thick,>=angle 90]
\draw (-1.3,.75) node {$M(s_i) = $};
\draw [>->] (0,0) to [out=90,in=-135] +(.5,.75) to [out=45,in=-90] +(.5,.75);
\draw [>->] (1,0) to [out=90,in=-45] +(-.5,.75) to [out=135,in=-90]
+(-.5,.75);
\draw (1.5,.75) node {$+$};

\begin{scope}[xshift=2cm]
\draw [<-<] (0,0) to [out=90,in=-135] +(.5,.75) to [out=45,in=-90] +(.5,.75);
\draw [>->] (1,0) to [out=90,in=-45] +(-.5,.75) to [out=135,in=-90]
+(-.5,.75);
\draw (1.5,.75) node {$+$};
\end{scope}

\begin{scope}[xshift=4cm]
\draw [>->] (0,0) to [out=90,in=-135] +(.5,.75) to [out=45,in=-90] +(.5,.75);
\draw [<-<] (1,0) to [out=90,in=-45] +(-.5,.75) to [out=135,in=-90]
+(-.5,.75);
\draw (1.5,.75) node {$+$};
\end{scope}

\begin{scope}[xshift=6cm]
\draw [<-<] (0,0) to [out=90,in=-135] +(.5,.75) to [out=45,in=-90] +(.5,.75);
\draw [<-<] (1,0) to [out=90,in=-45] +(-.5,.75) to [out=135,in=-90]
+(-.5,.75);
\draw (1.5,.75) node {$+$};
\end{scope}

\begin{scope}[xshift=8cm]
\draw [>->] (0,0) to [out=90,in=-135] +(.5,.75) to [out=45,in=-90] +(.5,.75);
\draw (1,0) circle(2.5pt);
\draw (0,1.5) circle(2.5pt);
\draw (1.5,.75) node {$+$};
\end{scope}

\begin{scope}[xshift=10cm]
\draw [<-<] (0,0) to [out=90,in=-135] +(.5,.75) to [out=45,in=-90] +(.5,.75);
\draw (1,0) circle(2.5pt);
\draw (0,1.5) circle(2.5pt);
\draw (1.5,.75) node {$+$};
\end{scope}

\begin{scope}[xshift=12cm]
\draw [>->] (1,0) to [out=90,in=-45] +(-.5,.75) to [out=135,in=-90]
+(-.5,.75);
\draw (0,0) circle(2.5pt);
\draw (1,1.5) circle(2.5pt);
\draw (1.5,.75) node {$+$};
\end{scope}

\begin{scope}[xshift=14cm]
\draw [<-<] (1,0) to [out=90,in=-45] +(-.5,.75) to [out=135,in=-90]
+(-.5,.75);
\draw (0,0) circle(2.5pt);
\draw (1,1.5) circle(2.5pt);
\end{scope}
\end{tikzpicture}
\end{equation}
If $d>2$, then the assignment looks locally as above with the remaining strands oriented in all possible ways.
%
%
%

Note that at vertex $i$ and $i+1$ (resp. $i^*$ and $(i+1)^*$)  every possible combination from $\{\up,\down,\circ\}$ occurs precisely once and composing $\Psi(s_i)$ with itself gives the identity. The other two braid relations can also be checked easily.  The image of $e_i$ looks locally as follows: 
\begin{equation*}
\label{Psiei}
\begin{tikzpicture}[scale=0.6,thick,>=angle 90]
\draw (-1.3,.75) node {$M(e_i) = $};
\draw [>->] (0,0) to [out=90,in=-180] +(.5,.5) to [out=0,in=90] +(.5,-.5);
\draw [>->] (0,1.5) to [out=-90,in=-180] +(.5,-.5) to [out=0,in=-90]
+(.5,.5);
\draw (1.5,.75) node {$+$};

\begin{scope}[xshift=2cm]
\draw [<-<] (0,0) to [out=90,in=-180] +(.5,.5) to [out=0,in=90] +(.5,-.5);
\draw [>->] (0,1.5) to [out=-90,in=-180] +(.5,-.5) to [out=0,in=-90]
+(.5,.5);
\draw (1.5,.75) node {$+$};
\end{scope}

\begin{scope}[xshift=4cm]
\draw [>->] (0,0) to [out=90,in=-180] +(.5,.5) to [out=0,in=90] +(.5,-.5);
\draw [<-<] (0,1.5) to [out=-90,in=-180] +(.5,-.5) to [out=0,in=-90]
+(.5,.5);
\draw (1.5,.75) node {$+$};
\end{scope}

\begin{scope}[xshift=6cm]
\draw [<-<] (0,0) to [out=90,in=-180] +(.5,.5) to [out=0,in=90] +(.5,-.5);
\draw [<-<] (0,1.5) to [out=-90,in=-180] +(.5,-.5) to [out=0,in=-90]
+(.5,.5);
\draw (1.5,.75) node {$+$};
\end{scope}

\begin{scope}[xshift=8cm]
\draw [>->] (0,0) to [out=90,in=-180] +(.5,.5) to [out=0,in=90] +(.5,-.5);
\draw (0,1.5) circle(2.5pt);
\draw (1,1.5) circle(2.5pt);
\draw (1.5,.75) node {$+$};
\end{scope}

\begin{scope}[xshift=10cm]
\draw [<-<] (0,0) to [out=90,in=-180] +(.5,.5) to [out=0,in=90] +(.5,-.5);
\draw (0,1.5) circle(2.5pt);
\draw (1,1.5) circle(2.5pt);
\draw (1.5,.75) node {$+$};
\end{scope}

\begin{scope}[xshift=12cm]
\draw [>->] (0,1.5) to [out=-90,in=-180] +(.5,-.5) to [out=0,in=-90]
+(.5,.5);
\draw (0,0) circle(2.5pt);
\draw (1,0) circle(2.5pt);
\draw (1.5,.75) node {$+$};
\end{scope}

\begin{scope}[xshift=14cm]
\draw [<-<] (0,1.5) to [out=-90,in=-180] +(.5,-.5) to [out=0,in=-90]
+(.5,.5);
\draw (0,0) circle(2.5pt);
\draw (1,0) circle(2.5pt);
\draw (1.5,.75) node {$+$};
\end{scope}

\begin{scope}[xshift=16cm]
\draw (0,1.5) circle(2.5pt);
\draw (1,1.5) circle(2.5pt);
\draw (0,0) circle(2.5pt);
\draw (1,0) circle(2.5pt);
\end{scope}
\end{tikzpicture}
\end{equation*}
If we compose it with itself then we obtain the same sum, but each oriented diagram with an additional clockwise circle, an additional anticlockwise circle and in the odd case also the original diagram itself. Hence, we obtain $\delta$ times the original diagram as required.  The fifth relation in \eqref{relBrauer} is clear. For the next relation note that $\Psi(e_i)\Psi(e_{i+1})\Psi(e_i)$  is a sum of oriented diagrams of the same underlying shape as the one for $e_ie_{i+1}e_i$ or subdiagrams of this form, but equipped with  orientations. Note that each fixed pair of orientation at the bottom and the top appears in precisely one summand and so $\Psi$ preserves the relation $e_ie_{i+1}e_i=e_i$ and similarly also $e_{i+1}e_ie_{i+1}=e_{i+1}$. One can easily check that it also preserves $e_is_i=e_i=s_ie_i$ and the last two relations of \eqref{relBrauer}.
\end{proof}

Thus, we have now two actions of the Brauer algebra on $V^{\otimes d}$, one by Theorem \ref{thm:Brauer_action} and another one given by Proposition \ref{prop:Brauer_embedded} and Theorem \ref{firstmain}.

\begin{lemma} \label{lem:actions_agree}
The actions of the Brauer algebra given in Theorem \ref{thm:Brauer_action} and the one given by Theorem \ref{firstmain} via the embedding of Proposition \ref{prop:Brauer_embedded} agree.
\end{lemma}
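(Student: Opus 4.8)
The plan is to verify the claimed equality of the two Brauer-algebra actions on $V^{\otimes d}$ generator by generator, which by the relations it suffices to do for $d=2$ (that is, for the generators $\sigma$ and $\tau$ of $\operatorname{Br}_2(\delta)$ acting on $V\otimes V$) and then note that the $d>2$ case follows because both actions are built ``locally'' in the same two tensor slots with the identity elsewhere. Concretely, I would first unwind what the second action does: the embedding $\Psi$ from Proposition \ref{prop:Brauer_embedded} sends $s_i$ (resp. $e_i$) to the matrix $M(s_i)$ (resp. $M(e_i)$) of oriented diagrams with all possible orientations of the two relevant strands together with the subdiagrams where one or two strands are replaced by $\circ$'s, and then $F$ from Theorem \ref{firstmain} turns each oriented labelled diagram into the linear map given by the weight function $\operatorname{wt}$. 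Composing, the second action of $s_i$ on a basis vector $v_{\underline i}$ of some summand $W_{\mathbf s}$ is $\sum F(b)(v_{\underline i})$ over the unique oriented subdiagram $b$ of $s_i$ compatible with $\mathbf s$; similarly for $e_i$.

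Next I would run the comparison slot by slot. For $\sigma$: take homogeneous basis vectors $v_a\otimes v_b$ with $a\in I^{\mathbf s_1}$, $b\in I^{\mathbf s_2}$. The first action gives $(-1)^{|v_a||v_b|}v_b\otimes v_a$ directly from Definition \ref{eisi}. On the other side, the only consistently labelled oriented crossing diagram compatible with these labels has top labels $(b,a)$ and its weight is exactly the crossing sign $(-1)^{|a||b|}$ by definition of $\operatorname{wt}$ (the crossing product, with no caps or cups), and the subdiagrams with $\circ$'s contribute nothing since $a,b\neq 0$ would have to label a $\circ$-vertex to be consistent. So the two agree on $\sigma$. (One also checks the case $a=0$ or $b=0$ in the odd case, where the relevant subdiagram of $s_i$ with a $\circ$ is used and again matches, because $\sigma(v_0\otimes w)=v_0\otimes\ldots$ has the right sign $(-1)^{0\cdot|w|}=1$.)

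For $\tau$: the first action sends $v_a\otimes v_b$ to $\langle v_a,v_b\rangle\sum_{i\in I}(-1)^{|v_i|}v_i\otimes v_i^*$, which is zero unless $b$ is the dual index of $a$ (i.e. the pair $\{a,b\}$ is one of $\{i,\overline i\}$ or $\{0,0\}$) and otherwise, using the explicit form matrix $J$, runs over pairs $(v_i,v_i^*)$. The second action sends $v_{\underline i}$ along $e_i$ to a sum over the oriented cup-cap subdiagram compatible with $\mathbf s$; consistency forces $\{a,b\}$ to be a dual pair at the bottom, and then the top is summed over all dual pairs, with each term weighted by $\operatorname{wt}$. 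The key point is to match the signs: the bottom clockwise cap contributes $-1$ exactly when its label $\overline c$ satisfies $\|c\|>m$ (i.e. $v_c$ odd), and the top anticlockwise cup contributes $-1$ exactly when its label is large, and these reproduce precisely the signs $\langle v_a,v_b\rangle$ (built from $J$, which has a $-\mathbbm{1}_n$ block) and $(-1)^{|v_i|}$ in the formula for $\tau$. I expect this sign bookkeeping — correctly identifying which cup/cap/orientation contributes which sign from $J$ and from the right-dual-basis convention $\langle v_i,v_j^*\rangle=\delta_{ij}$, in both the even and odd cases — to be the main obstacle; everything else is a direct comparison of two explicit formulas. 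Having checked all generators, I conclude the two actions of $\operatorname{Br}_d(\delta)$ on $V^{\otimes d}$ coincide.
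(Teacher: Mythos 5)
Your proposal takes the same route as the paper: the paper's entire proof is the single sentence that the equality follows from the definitions by a direct calculation on the generators, and you spell out that calculation (reduce to $d=2$ by locality, match the crossing sign $(-1)^{|a||b|}$ for $\sigma$ against the crossing weight, and match the cap/cup weights with large labels against the signs coming from $J$, the supertrace sign $(-1)^{|v_i|}$, and the right-dual-basis convention for $\tau$). One small caveat worth flagging: the sentence ``these reproduce precisely the signs $\langle v_a,v_b\rangle$ \dots\ and $(-1)^{|v_i|}$'' slightly overstates the cleanliness of the correspondence. For the orientation $(\up,\down)$ at the bottom, $\langle v_a,v_{\overline a}\rangle$ is $+1$ for every $a$, while the clockwise-cap weight is $-1$ precisely when $\|a\|>m$, so the cap sign is not literally ``$\langle v_a,v_b\rangle$''; the agreement only appears once the sign hidden in the right dual basis $v_i^*$ (which for $\|i\|>m$ differs from the ``obvious'' partner $v_{\overline i}$ by a sign coming from the $-\mathbbm{1}_n$ block of $J$) is folded in together with both the cap and the cup weights and the orientation of each. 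You do acknowledge this at the end, so I take it as an informal heuristic rather than a claimed lemma, but in a write-up one should avoid pairing the individual signs too literally and instead verify the products match on each of the finitely many orientation patterns of the cup--cap (and the $\circ$-subdiagrams in the odd case).
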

\begin{proof}
This follows directly from the definitions by a direct calculation on the generators.
\end{proof}

By Proposition \ref{prop:Brauer_embedded} we have the induced map $\Phi$ from the Brauer algebra to the algebra ${\rm End}_{{\rm Mat}(\mathcal{OB}_d(m-n))}\left( \bigoplus_{\mathbf{s}} \mathbf{s}\right)$. Moreover we have the action map $\Theta$ from ${\rm End}_{{\rm Mat}(\mathcal{OB}_d(m-n))}\left( \bigoplus_{\mathbf{s}} \mathbf{s}\right)$ to ${\rm End}_{\mathfrak{gl}(m|n)}\left(V^{\otimes d} \right)$ and know from Lemma~\ref{lem:actions_agree} that the image of $\Theta \circ \Phi$ is contained in ${\rm End}_{\mathfrak{osp}(2m+1|2n)}\left(V^{\otimes d} \right)$. Hence we get the induced map $\Psi$ as indicated in \eqref{THEdiagram}. We claim that $\Psi$ is an isomorphism if $d \leq m+n$. First note that it is injective, since $\Phi$ is injective by definition, and $\Theta$ is injective if $d<(m+1)(n+1)$ by \cite[Theorem 7.8]{BS5}, in particular it is injective if $d\leq m+n$.

Our strategy to prove surjectivity will be to use the surjectivity of $\Theta$ from \cite[Theorem 7.8]{BS5} and show that any element of ${\rm End}_{{\rm Mat}(\mathcal{OB}_d(m-n))}\left( \bigoplus_{\mathbf{s}} \mathbf{s}\right)$ that commutes with the action of the ortho-symplectic Lie superalgebra is already contained in the image of $\Phi$.

We will show this by an inductive argument. For this we subdivide our set of generalized Brauer diagrams into smaller sets:
$$ \widehat{\mathcal{B}}[d] = \coprod_{1 \leq k \leq d, 1 \leq r \leq d} \widehat{\mathcal{B}}[d]_{(k,r)},$$
where $\widehat{\mathcal{B}}[d]_{(k,r)}$ denotes the set of diagrams with exactly $2r$ singleton subsets and $k$ vertical strands, i.e. subsets of the form $\{i,j^*\}$ for some $i,j$. Furthermore, we note that for the even case we can embed the set of Brauer diagrams into the set of generalized Brauer diagrams as
\begin{eqnarray} \label{eqn:diagram_decomp}
\mathcal{B}[d] = \coprod_{1 \leq k \leq d} \widehat{\mathcal{B}}[d]_{(k,0)}.
\end{eqnarray}
  
\begin{theorem} \label{thm:maintwo}
Assume that one of the following conditions holds:
\begin{eqnarray}
{\rm sdim} V \neq 2m|0&\text{and}&d \leq m+n\text { or}\label{eq1}\\
{\rm sdim} V = 2m|0&\text{with}&m > 0 \text{ and }d < m.\label{eq2}
\end{eqnarray}
Then the map $\Psi$ is an isomorphism, i.e., ${\rm Br}_d(\delta) \cong {\rm End}_{\mathfrak{osp}(V)}\left(V^{\otimes d} \right)$.
\end{theorem}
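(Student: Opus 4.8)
The injectivity of $\Psi$ was already observed above: $\Phi$ is injective by Proposition~\ref{prop:Brauer_embedded}, $\Theta$ is injective since $d<(m+1)(n+1)$ holds in both cases, by \cite[Theorem 7.8]{BS5}, and by Lemma~\ref{lem:actions_agree} the map $\Psi$ is just the corestriction of $\Theta\circ\Phi$ to ${\rm End}_{\mathfrak{osp}(V)}(V^{\otimes d})$. So only surjectivity needs an argument. The plan is first to push an arbitrary $\mathfrak{osp}(V)$-endomorphism into ${\rm End}_{{\rm Mat}(\mathcal{OB}_d(m-n))}\left(\bigoplus_{\mathbf{s}}\mathbf{s}\right)$, using that the general-linear centralizer is already understood, and then to exploit the remaining $\mathfrak{osp}(V)$-equivariance to pin the resulting element down to ${\rm Im}(\Phi)$.

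For the reduction, let $x\in{\rm End}_{\mathfrak{osp}(V)}(V^{\otimes d})$. Since $\iota(\mathfrak{gl}(m|n))\subseteq\mathfrak{osp}(V)$ we have $x\in{\rm End}_{\mathfrak{gl}(m|n)}(V^{\otimes d})={\rm End}_{\mathfrak{gl}(m|n)}\left(\bigoplus_{\mathbf{s}}W_{\mathbf{s}}\right)$, and by \cite[Theorem 7.8]{BS5} the map $\Theta$ is an isomorphism under our hypotheses, so $x=\Theta(y)$ for a unique $y\in{\rm End}_{{\rm Mat}(\mathcal{OB}_d(m-n))}\left(\bigoplus_{\mathbf{s}}\mathbf{s}\right)$. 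It then suffices to show $y\in{\rm Im}(\Phi)$, for in that case $y=\Phi(b)$ with $b\in{\rm Br}_d(\delta)$ and $x=\Theta(\Phi(b))=\Psi(b)$.

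Now expand $y=\sum_{D}c_{D}\,D$ in the basis of oriented generalized Brauer diagrams $D$ of ${\rm End}_{{\rm Mat}(\mathcal{OB}_d(m-n))}\left(\bigoplus_{\mathbf{s}}\mathbf{s}\right)$. Writing $M(b)=\sum_{b'\lhd b}\sum_{o}(b',o)$, with the second sum over the orientations $o$ of the generalized diagram $b'$ admitted by Definition~\ref{defoje}, one checks directly that $y\in{\rm Im}(\Phi)={\rm span}\{M(b):b\in\mathcal{B}[d]\}$ exactly when (a) $c_{(b',o)}$ is independent of $o$, say $c_{(b',o)}=\gamma_{b'}$, and (b) $\gamma_{b'}=\sum_{b\in\mathcal{B}[d],\,b'\lhd b}\gamma_{b}$ for every generalized Brauer diagram $b'$. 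I would establish (a) and (b) by induction along the stratification $\widehat{\mathcal{B}}[d]=\coprod_{k,r}\widehat{\mathcal{B}}[d]_{(k,r)}$, treating the most permutation-like diagrams (largest $k$, no caps or cups and no singletons) first and then increasing $d-k$, feeding the already-proved instances of (a) and (b) into the step for the next stratum. The mechanism is that commuting $\Theta(y)$ with the generators of $\mathfrak{osp}(V)$ outside $\iota(\mathfrak{gl}(m|n))$ — in the notation of Definition~\ref{osp} the operators on $V^{\otimes d}$ built from the $W_\up\leftrightarrow W_\down$-mixing blocks $a_i,d_i,z_i$ ($i=1,2$), already present in the even case, together with, only in the odd case, those built from the blocks $u_i,x_i$ ($i=1,2$) mixing $W_\circ$ with $W_\up$ and $W_\down$ — yields, upon sliding such an operator past the strands of a diagram and comparing coefficients on standard tensors, precisely the relations ``re-orienting one strand leaves the coefficient unchanged'' (which gives (a)) and ``turning one arc of $b'$ into two singletons, resp.\ into $\circ$'s, forces $\gamma_{b'}$ to equal the sum of the coefficients of all diagrams obtained by re-attaching that arc admissibly'' (which gives (b)). In the even case there are no $\circ$'s, every stratum with $r>0$ is sent to $0$ by $\Theta$, and $M(b)$ reduces to $\sum_{o}(b,o)$, so only (a) is needed and follows from commuting with the $a_i,d_i,z_i$-type operators alone.

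The routine but somewhat lengthy part is the explicit evaluation of $[\Theta(y),\xi]$ for $\xi$ running over these extra generators and the bookkeeping turning it into the displayed relations among the $c_{D}$; it uses only the matrix description of Definition~\ref{osp}, the formula for $F$ in Theorem~\ref{firstmain}, and $\Theta$ being an isomorphism in the relevant range. I expect this bookkeeping — keeping track of the several cases for how such a generator meets a through-strand, a cup, a cap or a $\circ$ — to be the main obstacle, and it is also the step where the hypotheses are used: the stronger requirement $d<m$ for ${\rm sdim}V=2m|0$ is exactly what is needed for the relations to still force (a) and (b), since ${\rm End}_{\mathfrak{so}_{2m}}(V^{\otimes m})$ is strictly larger than ${\rm Br}_m(2m)$. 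Finally one records that these bounds are not optimal; sharp bounds are obtained by less elementary arguments in \cite{LZ3}.
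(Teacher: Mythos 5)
Your overall strategy matches the paper's: push an $\mathfrak{osp}(V)$-endomorphism through $\Theta^{-1}$ into the oriented Brauer endomorphism ring using the $\mathfrak{gl}(m|n)$ double-centralizer result, characterise ${\rm Im}(\Phi)$ there, and use the residual $\mathfrak{osp}$-equivariance to pin the coefficients down. Your conditions (a) and (b) are indeed a correct reformulation of $y\in{\rm Im}(\Phi)$, and your choice of extra $\mathfrak{osp}$-generators is the right one.

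However, the step you defer to "routine but somewhat lengthy bookkeeping" is precisely the substance of the paper's proof, and it is not routine. The paper establishes two separate claims by direct computation: (i) the coefficient of a diagram in $f$ depends only on the underlying generalized Brauer diagram and not on its orientation (Claim~2), and (ii) at the maximal number $k$ of through-strands, the minimal number $r$ of singletons is $0$ (Claim~1, via Lemma~\ref{lem:find_larger} and Corollary~\ref{cor:eliminate_subdiagrams} in the odd case). Both hinge on non-obvious choices: one fixes a consistent labelling in which all strands carry pairwise distinct absolute values and a designated strand $S$ carries a distinguished label ($m+1$, $1$, or $2$ depending on whether $n>0$, ${\rm sdim}V=2m+1|0$, or $2m|0$), and one commutes against a single carefully chosen root vector $X\in\mathfrak{osp}(V)\setminus\iota(\mathfrak{gl}(m|n))$ rather than against "all the extra generators at once". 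Crucially, the maximality of $k$ is itself used inside the computation: in the cap case the vanishing $f(v_{\underline{i}''})=0$ holds only because any consistently labelled diagram contributing to that term would have more than $k$ through-strands. Your proposed induction ("treat the permutation-like diagrams first, then increase $d-k$") does not naturally carry this maximality information forward, and you would need to rebuild the paper's subtract-and-recurse structure to recover it. Also note that the hypotheses $d\leq m+n$ (resp.\ $d<m$) enter exactly here — they guarantee the existence of the injective labelling needed for the computation, not merely that the relations "still force" (a) and (b). So: correct plan, same approach in spirit, but the claims you label bookkeeping are the theorem's real content and require the explicit case-by-case analysis of Claims~1 and~2.
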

\begin{proof}
It only remains to prove the surjectivity. For this we proceed as follows: Given an element $f \in {\rm End}_{{\rm Mat}(\mathcal{OB}_d(m-n))}\left( \bigoplus_{\mathbf{s}} \mathbf{s}\right)$ which commutes with the action of $\mathfrak{osp}(V)$ we will show that there is a recursive procedure to write $f$ as a linear combination of elements $M(b)$ for $b \in \mathcal{B}[d]$. This will be done by successively subtracting multiples of $M(b)$'s with decreasing numbers of vertical strands. Since the $M(b)$ for $b \in \mathcal{B}[d]$ span the image of the map $\Psi$, the claim follows. Write
$$ f = \sum_{(\mathbf{t}, b, \mathbf{s}) \text{ oriented}} \gamma_{\mathbf{s},b,\mathbf{t}} 1_\mathbf{s} b 1_\mathbf{t}.$$
Let $k$ be maximal and then $r$ minimal such that there exists an oriented generalized Brauer diagram $(\mathbf{t},b,\mathbf{s})$ with $b \in \widehat{\mathcal{B}}[d]_{(k,r)}$ and $\gamma_{\mathbf{s},b,\mathbf{t}} \neq 0$. 

\begin{claim}[1]
\label{Claim1}
The equality $r=0$ holds. In particular we can find $b \in \widehat{\mathcal{B}}[d]_{(k,0)}$ such that there exists an oriented Brauer diagram $(\mathbf{t},b,\mathbf{s})$ with $\gamma_{\mathbf{s},b,\mathbf{t}} \neq 0$. 
\end{claim}
For the even case this holds by definition and (\ref{eqn:diagram_decomp}). For the odd case this is Corollary~\ref{cor:eliminate_subdiagrams} below. The following is proved in the next paragraph:
\begin{claim}[2]
\label{Claim2}
Let $(\mathbf{t}',b,\mathbf{s}')$ be an oriented Brauer diagram with the same underlying diagram $b$ from Claim (1). Then $\gamma_{\mathbf{s}',b,\mathbf{t}'} = \gamma_{\mathbf{s},b,\mathbf{t}}$.
\end{claim}
We assume for now the two claims hold and fix $b \in \widehat{\mathcal{B}}[d]_{(k,0)}$ as in Claim (1). We denote by $\gamma_b\not=0$ its coefficient in $f$. By Claim (2) this is well-defined, ie. independent of a chosen orientation. Then define
$$ f' = f - \sum_{b \in \widehat{\mathcal{B}}[d]_{(k,0)}} \gamma_b M(b).$$

Since $M(b) \in {\rm im} (\Phi)$ for all $b\in \widehat{\mathcal{B}}[d]_{(k,0)}$ the surjectivity of the theorem follows if we show that $f' \in  {\rm im} (\Phi)$. Thanks to Claim (1) we know that $f'$ is contained in the span of the oriented generalized Brauer diagrams $(\mathbf{r},c,\mathbf{p})$ with $c \in \widehat{\mathcal{B}}[d]_{(l,q)}$ where $l \leq k-1$ or $l=k$ and $q\not=0$. But then by Claim (1) we have $l \leq k-1$ and some $q$. 
Hence, either $f'=0$ or we can repeat our arguments for $f'$ instead of $f$ and our maximal choice of $k$ strictly decreases in each step. Hence after finitely many steps we reduced the question whether $f \in {\rm im} (\Phi)$ to the question whether $0 \in {\rm im} (\Phi)$. This is certainly true, and thus the theorem follows.
\end{proof}
 
\begin{proof}[Proof of Claim (2)]
Our proof treats the situations \eqref{eq1} and \eqref{eq2} separately distinguishing moreover in \eqref{eq1} the cases $n>0$ respectively $n=0$.

Let first  $X$ be the unique element in $\mathfrak{osp}(V)$ which maps $v_{m+1}$ to $v_{\overline{m+1}}$ and annihilates all other basis elements, that is in terms of Definition~\ref{osp} the matrix $d_1$ with exactly one non-zero entry $1$ in the upper left corner. Its transpose $X^T$ maps $v_{\overline{m+1}}$ to $v_{{m+1}}$ and annihilates all other basis vectors.

\textit{Assume \eqref{eq1}, ie. $n > 0$ and $d \leq n+m$:} It is enough to consider the situation where  $(\mathbf{t},b,\mathbf{s})$ and $(\mathbf{t}',b,\mathbf{s}')$ differ only in the orientation of one strand $S$, since otherwise we can repeat the argument. Thanks to the assumption $d\leq n+m$ we can pick $\underline{i} \in {\rm Vect}(\mathbf{s})$, $\underline{j} \in {\rm Vect}(\mathbf{t})$ to get a consistent labelling of $(\mathbf{t},b,\mathbf{s})$ with the following property
\begin{center}
{\it Different strands are labelled with different absolute values, and the strand $S$ is the unique strand labelled with absolute value $m+1$.}
\end{center}

To prove Claim (2) we have to distinguish between three cases, namely the cases where $S$ is a vertical strand, a cup or a cap respectively, ie. the cases where from the two labels $m+1$ and $\overline{m+1}$ exactly one, none or both occur in $\underline{i}$.

For an arbitrary labelling sequence $\underline{i}$ we denote by  $\underline{i^{\uparrow}}$,  $\underline{i}^{\downarrow}$ and  $\underline{i}^{\updownarrow}$ the sequence obtained by changing all $m+1$'s into $\overline{m+1}$'s, by changing all $\overline{m+1}$'s into $m+1$'s, or by swapping the labels $\overline{m+1}$ and  $m+1$ respectively. 

\textbf{Case I: Vertical Strand.} Assume $S$ is labelled $m+1$ (the case of the label $\overline{m+1}$ is done by replacing the role of $X$ with $X^T$).  Since the label $m+1$ occurs only at strand $S$ it follows that 
$Xv_{\underline{i}}= v_{\underline{i}^{\uparrow}}$. Moreover, $\underline{i}^{\uparrow}\in {\rm Vect}(\mathbf{s}')$,  $\underline{j}^{\uparrow}\in {\rm Vect}(\mathbf{t}')$. The equivariance \eqref{oh} of $f$ implies
$$ \left\langle f(Xv_{\underline{i}}),v_{\underline{j}^{\uparrow}}^*\right\rangle = - \left\langle f(v_{\underline{i}}),Xv_{\underline{j}^{\uparrow}}^*\right\rangle= \left\langle f(v_{\underline{i}^{\uparrow}}),v_{\underline{j}^{\uparrow}}^*\right\rangle = \gamma_{\mathbf{s}',b,\mathbf{t}'} {\rm wt}\left(b_{\underline{i}^{\uparrow}}^{\underline{j}^{\uparrow}}\right),$$
 Hence
\begin{equation} \label{eqn:case1_1}
\left\langle f(Xv_{\underline{i}}),v_{\underline{j}}^*\right\rangle = \left\langle f(v_{\underline{i}^{\uparrow}}),v_{\underline{j}^{\uparrow}}^*\right\rangle = \gamma_{\mathbf{s}',b,\mathbf{t}'} {\rm wt}\left(b_{\underline{i}^{\uparrow}}^{\underline{j}^{\uparrow}}\right),
\end{equation}
where the last equality is due to the fact that only $(\mathbf{t}',b,\mathbf{s}')$ can be consistently labelled by $\underline{i}^{\uparrow}$ and $\underline{j}^{\uparrow}$.  Similarly, $X^T v_{\underline{j}^{\uparrow}}= v_{\underline{j}}$ 
\begin{equation}  \label{eqn:case1_2}
-\left\langle f(v_{\underline{i}}),Xv_{\underline{j}^{\uparrow}}^*\right\rangle = \left\langle f(v_{\underline{i}}),(X^T v_{\underline{j}^{\uparrow}})^*\right\rangle = \left\langle f(v_{\underline{i}}),v_{\underline{j}}^*\right\rangle = \gamma_{\mathbf{s},b,\mathbf{t}} {\rm wt}\left(b_{\underline{i}}^{\underline{j}} \right).
\end{equation}
The weights in equations (\ref{eqn:case1_1}) and (\ref{eqn:case1_2}) are equal, since we did not change the parity of the label. Therefore we obtain $\gamma_{\mathbf{s},b,\mathbf{t}} = \gamma_{\mathbf{s}',b,\mathbf{t}'}$.

\textbf{Case II: Cup.}  In case $S$ is a cup, $\underline{i}$ contains no label $m+1$ and so $Xv_{\underline{i}}=0$, whereas $\underline{j}$ contains $m+1$ and $\overline{m+1}$ and therefore $X^T v_{\underline{j}{\downarrow}} = \pm ( v_{\underline{j}} + v_{\underline{j}^{\updownarrow}})$. Since  $\left\langle f(Xv_{\underline{i}}),v_{\underline{j}^{\downarrow}}^*\right\rangle = -\left\langle f(v_{\underline{i}}),Xv_{\underline{j}^{\downarrow}}^*\right\rangle$ we obtain

\begin{equation*}
0  = \left\langle f(v_{\underline{i}}),Xv_{\underline{j}^{\downarrow}}^*\right\rangle =\pm \left(\gamma_{\mathbf{s},b,\mathbf{t}} {\rm wt}\left(b_{\underline{i}}^{\underline{j}}\right) + \gamma_{\mathbf{s},b,\mathbf{t}'} {\rm wt}\left(b_{\underline{i}}^{{\underline{j}^{\updownarrow}}}\right) \right).
\end{equation*}
As we changed only the orientation at a cup with large label, the two weights will exactly differ by a sign and since $\mathbf{s} = \mathbf{s}'$ in this case, we have $\gamma_{\mathbf{s},b,\mathbf{t}} = \gamma_{\mathbf{s}',b,\mathbf{t}'}$.

\textbf{Case III: Cap.}
If $S$ is a cap then $\overline{m+1}$ appears twice in ${\underline{i}^{\uparrow}}$  whereas $m+1$ does not appear, so $f(v_{\underline{i}^{\uparrow}})=0$. To see this note that the label $m+1$ must correspond to vertical strands and therefore any consistently labeled diagram with labels ${\underline{i}^{\uparrow}}$ at the bottom must have more than $k$ vertical strands  and so the claim follows by the maximality of $k$.  Therefore,  $\left\langle X.f(v_{\underline{i}^{\uparrow}}),v_{\underline{j}}^*\right\rangle=0$. On the other hand $X.v_{\underline{i}\uparrow} = \pm\left( v_{\underline{i}} + v_{{\underline{i}^{\updownarrow}}} \right)$, thus
\begin{eqnarray*}
0  = \pm \left\langle f(v_{\underline{i}} + v_{\underline{i}^{\updownarrow}}),v_{\underline{j}}^*\right\rangle = \pm \left( \gamma_{\mathbf{s}',b,\mathbf{t}} {\rm wt}\left(b_{\underline{i}}^{\underline{j}}\right) + \gamma_{\mathbf{s},b,\mathbf{t}} {\rm wt}\left(b_{\underline{i}^{\updownarrow}}^{\underline{j}}\right) \right).
\end{eqnarray*}
We now switched the orientation of a cap with large label and so the two weights differ by a sign. Furthermore $\mathbf{t} = \mathbf{t}'$. Thus we obtain $\gamma_{\mathbf{s},b,\mathbf{t}} = \gamma_{\mathbf{s}',b,\mathbf{t}'}$.

\textit{Assume ${\rm sdim V} = 2m+1|0$ and $d \leq m=m+n$:}
Let $(\mathbf{t},b,\mathbf{s})$ be our  oriented Brauer diagram. Thanks to the assumption on $d$ we can pick $\underline{i} \in {\rm Vect}(\mathbf{s})$, $\underline{j} \in {\rm Vect}(\mathbf{t})$ to get a consistent labelling of $(\mathbf{t},b,\mathbf{s})$ with the following property
\begin{center}
{\it Different strands are labelled with different absolute values and there is a unique strand $S$ labelled with absolute value $1$.}
\end{center}
Let $X$ be the unique element in $\mathfrak{osp}(V)$ that maps $v_{1}$ to $v_{0}$, $v_{0}$ to $-v_{\overline{1}}$ and all other basis elements to zero. In the presentation from Definition \ref{osp} this means that only $-u_1^T$ contains a non-zero entry, namely a $1$ as leftmost entry. Let $(\mathbf{t}^\circ,b^\circ,\mathbf{s}^\circ)$ be the unique oriented Brauer diagram obtained from $(\mathbf{t},b,\mathbf{s})$ by  deleting the strand $S$ and replacing it by two singleton sets. Define $\mathbf{t}^\circ$ and $\mathbf{t}^\circ$ accordingly and let $\underline{i}^\circ\in {\rm Vect}(\mathbf{s}^\circ)$ and $\underline{j}^\circ\in {\rm Vect}(\mathbf{t}^\circ)$ be the consistent labelling of $(\mathbf{t}^\circ,b^\circ,\mathbf{s}^\circ)$ obtained from $\underline{i}$ and $\underline{j}$ by changing all $1$'s resp. $\overline{1}$'s into $0$'s. 

We distinguish again three cases:

\textbf{Case I: Vertical Strand.} We assume that $S$ is labelled $1$, the case of label $\overline{1}$ is done analogously. 
Clearly, $Xv_{\underline{i}}= v_{\underline{i}^\circ}$, hence
\begin{equation} \label{eqn:odd_case1_1}
\left\langle f(Xv_{\underline{i}}),v_{{\underline{j}^{\circ}}}^*\right\rangle = \left\langle f(v_{\underline{i}^\circ}),v_{\underline{j}^\circ}^*\right\rangle = \gamma_{\mathbf{s}^\circ,b^\circ,\mathbf{t}^\circ} {\rm wt}\left((b^\circ)_{\underline{i}^\circ}^{\underline{j}^\circ}\right),
\end{equation}
where the last equality is due to the fact that only $(\mathbf{t}^\circ,b^\circ,\mathbf{s}^\circ)$ can be consistently labelled by $\underline{i}^\circ$ and $\underline{j}^\circ$. On the other hand we can use equivariance of $f$ and the fact that $X^T v_{\underline{j}^\circ}= v_{\underline{j}}$ to obtain
\begin{equation}  \label{eqn:odd_case1_2}
-\left\langle f(v_{\underline{i}}),Xv_{\underline{j}^\circ}^*\right\rangle = \left\langle f(v_{\underline{i}}),v_{\underline{j}}^*\right\rangle = \gamma_{\mathbf{s},b,\mathbf{t}} {\rm wt}\left(b_{\underline{i}}^{\underline{j}} \right).
\end{equation}
Since $v_1$ has even parity, the weights in equations (\ref{eqn:odd_case1_1}) and (\ref{eqn:odd_case1_2}) are equal. Therefore we obtain $\gamma_{\mathbf{s},b,\mathbf{t}} = \gamma_{\mathbf{s}^\circ,b^\circ,\mathbf{t}^\circ}$. 

\textbf{Case II: Cup.}  Hence, $\underline{i}$ contains no $1$ and so $Xv_{\underline{i}}=0$. Note that $\mathbf{s} = \mathbf{s}^\circ$. Now let $\underline{j''}$ be the sequence obtained from $\underline{j}$ by switching the unique $1$ to $0$. It holds that $X^T v_{\underline{j}''} =  v_{\underline{j}} - v_{\underline{j}^\circ}$ and thus 
\begin{eqnarray*}
0 = \left\langle f(Xv_{\underline{i}}),v_{\underline{j}''}^*\right\rangle = -\left\langle f(v_{\underline{i}}),Xv_{\underline{j}''}^*\right\rangle = \gamma_{\mathbf{s},b,\mathbf{t}} {\rm wt}\left(b_{\underline{i}}^{\underline{j}}\right) - \gamma_{\mathbf{s},b^\circ,\mathbf{t}^\circ} {\rm wt}\left((b^\circ)_{\underline{i}}^{\underline{j}^\circ}\right).
\end{eqnarray*}

Since we changed a cup with small label, the two appearing weights agree and thus and we obtain $\gamma_{\mathbf{s},b,\mathbf{t}} = \gamma_{\mathbf{s}^\circ,b^\circ,\mathbf{t}^\circ}$, since $\mathbf{s} = \mathbf{s}^\circ$. 

\textbf{Case III: Cap.} Note that $\underline{i}^\circ$ is the labelling derived from $\underline{i}$ by replacing $1$ and $\overline{1}$ by $0$.  Let $\underline{i}''$ be obtained from $\underline{i}$ by switching the unique $\overline{1}$ to $0$,
With the same argument as in Case III above,  the maximality of  $k$  implies $f(v_{\underline{i}''})=0$. Hence also $\left\langle X.f(v_{\underline{i}''}),v_{\underline{j}}^*\right\rangle=0$. On the other hand $X.v_{\underline{i}''} = v_{\underline{i}^\circ} - v_{\underline{i}}$, and therefore we obtain
\begin{eqnarray*}
0 = \left\langle f(Xv_{\underline{i}''}),v_{\underline{j}}^*\right\rangle = \left\langle f(v_{\underline{i^\circ}} - v_{\underline{i}}),v_{\underline{j}}^*\right\rangle = \gamma_{\mathbf{s}^\circ,b^\circ,\mathbf{t}} {\rm wt}\left((b^\circ)_{\underline{i^\circ}}^{\underline{j}}\right) - \gamma_{\mathbf{s},b,\mathbf{t}} {\rm wt}\left(b_{\underline{i}}^{\underline{j}}\right).
\end{eqnarray*}
using the fact that  $\mathbf{t} = \mathbf{t}^\circ$. As above, we only changed a cap with small labels so the weights agree. Thus we obtain $\gamma_{\mathbf{s},b,\mathbf{t}} = \gamma_{\mathbf{s}^\circ,b^\circ,\mathbf{t}^\circ}$. 

In all three cases we proved $\gamma_{\mathbf{s},b,\mathbf{t}} = \gamma_{\mathbf{s}^\circ,b^\circ,\mathbf{t}^\circ}$. 
The same arguments apply after we switched the orientation on the strand $S$, hence $\gamma_{\mathbf{s}',b,\mathbf{t}'} = \gamma_{\mathbf{s}^\circ,b^\circ,\mathbf{t}^\circ}$. This implies the claim $\gamma_{\mathbf{s},b,\mathbf{t}} = \gamma_{\mathbf{s}',b,\mathbf{t}'}$.

\textit{Assume: ${\rm sdim V} = 2m|0$ and $d < m=m+n$:} This case is very similar to the previous one, but easier. We just replace the occurrences of the label $0$ by $2$ in the previous argument. For instance we choose $X$ to be the unique element in $\mathfrak{osp}(V)$ that maps $v_{1}$ to $v_{\overline{2}}$, $v_{2}$ to $-v_{\overline{1}}$ and all other basis elements to zero (in the presentation from Definition \ref{osp} this means that only the matrix $a_1$ contains two non-zero entries). Furthermore we assume that no strand is labelled with absolute value $2$, which is possible by the assumption $d < m+n$. Then the calculations are the same as in the previous case.
\end{proof}

We are left with showing that Claim (1) from the proof of Theorem \ref{thm:maintwo} holds in the odd case.

\begin{lemma} \label{lem:find_larger}
Assume $d \leq m+n$. Let $f \in {\rm End}_{{\rm Mat}(\mathcal{OB}_d(m-n))}\left( \bigoplus_{\mathbf{s}} \mathbf{s}\right)$ be an element that commutes with the action of $\mathfrak{osp}(V)$ and write
$$ f = \sum_{(\mathbf{t}, b, \mathbf{s}) \text{ oriented}} \gamma_{\mathbf{s},b,\mathbf{t}} 1_\mathbf{s} b 1_\mathbf{t}.$$
Assume that there exist $\gamma_{\mathbf{s},b,\mathbf{t}} \neq 0$ such that $b \in \widehat{\mathcal{B}}[d]_{(k,r)}$ for some $r > 0$. Then there exists $\gamma_{\mathbf{s}',b',\mathbf{t}'} \neq 0$ such that $b' \in \widehat{\mathcal{B}}[d]_{(k,r-1)}$ and $b \lhd b'$.
\end{lemma}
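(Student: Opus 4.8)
The plan is to mimic the argument in the proof of Claim~(2), working now ``in the $\circ$-direction'': instead of comparing two orientations of a fixed arc, I would compare a pair of singletons of a diagram (each necessarily carrying the label $0$) with a genuine arc labelled by a matched pair $\{1,\overline 1\}$. The tool is the same element $X\in\mathfrak{osp}(V)$ used in the odd part of that proof, namely the unique endomorphism with $Xv_1=v_0$, $Xv_0=-v_{\overline 1}$ and $Xv_i=0$ otherwise, together with its transpose $X^T$ ($X^Tv_0=v_1$, $X^Tv_{\overline 1}=-v_0$, $X^Tv_i=0$ otherwise); these lie in $\mathfrak{osp}(V)$ because we are in the odd case, so $v_0$ exists, and they are even unless $m=0$, in which case they are odd and the parity signs in \eqref{oh} must be carried along. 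The point of these elements is that the comultiplied operators $\Delta(X),\Delta(X^T)$, applied in succession, toggle the label $0$ with the pair $\{1,\overline 1\}$; since $f$ commutes with both and each element of $\mathfrak{osp}(V)$ acts on $V^{\otimes d}$ by an operator which is anti-self-adjoint (up to parity) for the invariant form, this turns the statement $\gamma_{\mathbf s,b,\mathbf t}\neq 0$ into a statement about coefficients of diagrams obtained from $b$ by joining two of its singletons.

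First I would fix a convenient consistent labelling of $(\mathbf t,b,\mathbf s)$. Since $b\in\widehat{\mathcal{B}}[d]_{(k,r)}$ with $r>0$, the number of arcs of $b$ (vertical strands, cups and caps together) is $d-r\le d-1\le m+n-1$, so one can choose $\underline i\in{\rm Vect}(\mathbf s)$, $\underline j\in{\rm Vect}(\mathbf t)$ with $b^{\underline j}_{\underline i}$ consistently labelled, all arcs of $b$ carrying pairwise distinct absolute values from $\{2,\dots,m+n\}$, and all singletons carrying $0$. Then ${\rm wt}(b^{\underline j}_{\underline i})\neq 0$, and since the nonzero labels are pairwise distinct in absolute value, $b$ is the \emph{only} generalized Brauer diagram admitting a consistent labelling by $(\underline i,\underline j)$; hence $\langle f(v_{\underline i}),v^{*}_{\underline j}\rangle=\gamma_{\mathbf s,b,\mathbf t}\,{\rm wt}(b^{\underline j}_{\underline i})\neq 0$.

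Next I would carry out the transfer. Using $f\circ\Delta(X)=\Delta(X)\circ f$, $f\circ\Delta(X^T)=\Delta(X^T)\circ f$ and anti-self-adjointness of $\Delta(X),\Delta(X^T)$, one expands both sides of
\[
\big\langle f\big(\Delta(X^T)\Delta(X)v_{\underline i}\big),v^{*}_{\underline j}\big\rangle=\pm\big\langle f(v_{\underline i}),\Delta(X)\Delta(X^T)v^{*}_{\underline j}\big\rangle .
\]
Expanding $\Delta(X^T)\Delta(X)v_{\underline i}$ produces a nonzero multiple of $v_{\underline i}$ itself plus, for each ordered pair of bottom singleton positions of $b$, the vector $v_{\underline i}$ with those two $0$'s replaced by $1$ and $\overline 1$; the analogous expansion holds on the right for the top singletons. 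The crucial observation is that each genuinely new term either (a) creates a matched pair $\{1,\overline 1\}$ at two same-side vertices, and is then consistently labelled only by the diagram $b'$ obtained from $b$ by replacing those two singletons with the corresponding arc, so that $b\lhd b'$ and $b'\in\widehat{\mathcal{B}}[d]_{(k,r-1)}$; or (b) creates an odd number of legs of absolute value $1$, whence no generalized Brauer diagram is consistently labelled that way and the term contributes $0$. Discarding the terms of type (b), the displayed identity becomes a linear relation
\[
(\text{nonzero scalar})\cdot\gamma_{\mathbf s,b,\mathbf t}\,{\rm wt}(b^{\underline j}_{\underline i})=\sum_{b'}\pm\,\gamma_{\mathbf s',b',\mathbf t'}\,{\rm wt}\big((b')^{\underline{j}'}_{\underline{i}'}\big),
\]
the sum over the finitely many diagrams $b'$ of the form in (a). As the left side is nonzero, some $\gamma_{\mathbf s',b',\mathbf t'}$ on the right is nonzero, and every such $b'$ lies in $\widehat{\mathcal{B}}[d]_{(k,r-1)}$, which is exactly the assertion. (The degenerate case where $b$ has one singleton on top and one on the bottom --- so $r=1$ --- must be treated separately: there the only join of two singletons is a vertical strand and the same computation yields a nonzero coefficient on a diagram with $k+1$ vertical strands; in the situation in which this lemma is applied $k$ is maximal with that property, so this case does not arise.)

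The main obstacle is precisely this bookkeeping. Unlike in the proof of Claim~(2), the auxiliary label $0$ cannot be confined to a single strand: \emph{every} singleton of $b$ is forced to carry $0$, so $\Delta(X)$ acts on all of them at once and $\Delta(X^T)\Delta(X)v_{\underline i}$ and $\Delta(X)\Delta(X^T)v^{*}_{\underline j}$ contain many extra terms. Showing that exactly the type-(a) terms survive uses the distinct-absolute-value normalization together with the orientation constraints of Definition~\ref{defoje} (two labels of absolute value $1$ with the same parity cannot bound a cup or a cap). The rest --- the precise multiplicities and signs, the $m=0$ case where $X$ is odd, the action of $X,X^T$ on the dual basis, and the fact that the weights of $b$ and $b'$ agree up to sign because the added arc carries the small labels $1,\overline 1$ --- is a direct, if tedious, rerun of Cases~I--III in the proof of Claim~(2).
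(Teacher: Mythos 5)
The overall direction is right---choose a consistent labelling by distinct absolute values with all singletons carrying $0$, and use $\mathfrak{osp}$-equivariance of $f$ against an element $X$ that links $v_0$ with $v_1,v_{\overline 1}$---but the specific identity you chose does not deliver the conclusion, and the defect is structural, not just a sign to be chased.

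Applying $\Delta(X^T)\Delta(X)$ to $v_{\underline i}$ produces (besides the cap-terms) the diagonal contribution $r_{\mathrm{bot}}\,v_{\underline i}$, where $r_{\mathrm{bot}}$ is the number of bottom singletons of $b$; after transporting across $f$ and adjointing, the right-hand side $\Delta(X)\Delta(X^T)v^*_{\underline j}$ contributes the diagonal term $r_{\mathrm{top}}\,v^*_{\underline j}$ together with cup-terms. Your relation therefore reads
$(r_{\mathrm{bot}}-r_{\mathrm{top}})\,\gamma_{\mathbf s,b,\mathbf t}\,{\rm wt}(b^{\underline j}_{\underline i})=\sum_{\text{caps}}\pm\gamma_{\mathbf s',b',\mathbf t'}-\sum_{\text{cups}}\pm\gamma_{\mathbf s'',b'',\mathbf t''}$,
and since $r_{\mathrm{bot}}-r_{\mathrm{top}}=2(\#\text{cups of }b-\#\text{caps of }b)$, the left side vanishes identically whenever $b$ has equally many cups and caps --- in particular for any $b$ built from vertical strands plus singletons, which is the typical situation. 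In that case the displayed identity is tautological and gives no nonvanishing $\gamma_{\mathbf s',b',\mathbf t'}$. Relatedly, because $\Delta(X^T)\Delta(X)$ applied to $v_{\underline i}$ only ever replaces two \emph{bottom} $0$'s and the adjointed operator only replaces two \emph{top} $0$'s, your expansion never produces a diagram where a bottom singleton is joined to a top singleton by a vertical strand; when $r_{\mathrm{bot}}=r_{\mathrm{top}}=1$ both sums are empty and the statement you call ``the degenerate case'' is not excluded by any hypothesis of the lemma (the lemma as stated does not assume $k$ maximal; that assumption only enters later, in the proof of Theorem \ref{thm:maintwo}).

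The paper's proof avoids this collapse by making the identity intrinsically asymmetric: it first modifies the labelling, setting $\underline i'$ to be $\underline i$ with one bottom $0$ changed to $1$, and then applies a \emph{single} $\Delta(X)$. On the one side $\Delta(X)v_{\underline i'}$ yields exactly one copy of $v_{\underline i}$ (so $\gamma_{\mathbf s,b,\mathbf t}$ appears exactly once) plus cap-terms; on the other side, adjointing a single $\Delta(X)$ against $v^*_{\underline j}$ yields only vertical-strand-terms joining the chosen bottom singleton to a top singleton. Thus $\gamma_{\mathbf s,b,\mathbf t}\,{\rm wt}(b^{\underline j}_{\underline i})$ stands alone on one side and equals a sum of coefficients for diagrams $b'$ with $b\lhd b'$ and one fewer singleton pair, giving the conclusion directly (with the new $b'$ having $k$ or $k+1$ vertical strands, which is the form recorded in Corollary~\ref{cor:eliminate_subdiagrams}).

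To repair your write-up, replace the symmetric two-step operator $\Delta(X^T)\Delta(X)$ on $v_{\underline i}$ by a one-step application of $\Delta(X)$ to the \emph{modified} vector $v_{\underline i'}$ (and handle the case that $\mathbf s$ contains no $\circ$ by exchanging the roles of $\underline i$ and $\underline j$, as the paper does); the rest of your bookkeeping about consistent labellings with distinct absolute values and small-label cups/caps then goes through.
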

\begin{proof}
Note that the assumptions of the lemma are never satisfied in the even case, so we can assume that we are in the odd case.
Let $\underline{i} \in {\rm Vect}({\mathbf{s}})$ and $\underline{j} \in {\rm Vect}({\mathbf{t}})$ be such that $b_{\underline{i}}^{\underline{j}}$ is consistently labelled. In addition we assume that all strands in $b$ are labelled by pairwise different absolute values and none of them is labelled with the absolute value $1$. This is possible due to the assumption $d \leq m+n$ and the assumption that there are less than $d$ strands, since $r > 0$. By the definition of the action we know that the coefficient of $v_{\underline{j}}$ in $F(f) v_{\underline{i}}$, i.e., $\left\langle F(f) v_{\underline{i}} , v_{\underline{j}}^* \right\rangle$, is equal to $\gamma_{b,\mathbf{s},\mathbf{t}} {\rm wt}(b_{\underline{i}}^{\underline{j}})$, since this is the only diagram that can be consistently coloured by $\underline{i}$ and $\underline{j}$.

We first assume that $\mathbf{s}$ contains the symbol $\circ$ at least once. Then let $\underline{i}' \in {\rm Vect}(\mathbf{s}(\underline{i}'))$ be equal to $\underline{i}$ except that the first occurring $0$ is changed into a $1$, furthermore let $X \in \mathfrak{osp}(V)$ be the unique element mapping $v_0$ to $v_{\overline{1}}$, $v_1$ to $-v_0$ and send all other basis elements to zero. Due to the assumption that $f$ commutes with $X$ we have the equalities
$$ \left\langle F(f)X.v_{\underline{i}'}, v_{\underline{j}}^* \right\rangle = \left\langle F(f)v_{\underline{i}'}, X.v_{\underline{j}}^* \right\rangle = \left\langle F(f)v_{\underline{i}'}, (-X^T.v_{\underline{j}})^* \right\rangle.$$
Computing the very left side we obtain $X.v_{\underline{i}'} = -v_{\underline{i}} + \sum_{\underline{k}} v_{\underline{k}}$, where the sum runs over all those $\underline{k}$ that differ from $\underline{i}'$ by switching one $0$ into a $\overline{1}$. Then
$$ \left\langle F(f)X.v_{\underline{i}'}, v_{\underline{j}}^* \right\rangle = -\gamma_{\mathbf{s},b,\mathbf{t}} {\rm wt}\left(b_{\underline{i}}^{\underline{j}}\right) + \sum_{\underline{k}} \gamma_{\mathbf{s}(\underline{k}),c(\underline{k}),\mathbf{t}} {\rm wt}\left(c(\underline{k})_{\underline{k}}^{\underline{j}}\right),$$
where $c(\underline{k})$ is the unique diagram where the $1$ and $\overline{1}$ in $\underline{k}$ are connected by a horizontal arc and $\underline{k} \in {\rm Vect}(\mathbf{s}(\underline{k}))$. For all $c(\underline{k})$ it holds that $b \lhd c(\underline{k})$ and they contain one more strand than $b$.

To compute the right side we see that $-X^T.v_{\underline{j}} = - \sum_{\underline{l}} v_{\underline{l}}$, where the sum runs over those $\underline{l}$ that differ from $\underline{j}$ by switching exactly one $0$ into a $1$. Hence 
$$ \left\langle F(f)v_{\underline{i}'}, X.v_{\underline{j}}^* \right\rangle = -\sum_{\underline{l}} \gamma_{\mathbf{s}(\underline{i}'),d(\underline{l}),\mathbf{t}(\underline{l})} {\rm wt}\left(d(\underline{l})_{\underline{i}'}^{\underline{l}}\right),$$
where $d(\underline{l})$ is the unique diagram where the $1$'s in both $\underline{i}'$ and $\underline{l}$ are connected by a vertical strand and $\underline{l} \in {\rm Vect}(\mathbf{t}(\underline{l}))$. Again it holds for all $d(\underline{l})$ that $b \lhd d(\underline{l})$ and that they contain one more strand than $b$.

Putting the two sides together we obtain
$$\gamma_{\mathbf{s},b,\mathbf{t}} {\rm wt}\left(b_{\underline{i}}^{\underline{j}}\right) = \sum_{\underline{k}} \gamma_{\mathbf{s}(\underline{k}),c(\underline{k}),\mathbf{t}} {\rm wt}\left(c(\underline{k})_{\underline{k}}^{\underline{j}}\right) + \sum_{\underline{l}} \gamma_{\mathbf{s}(\underline{i}'),d(\underline{l}),\mathbf{t}(\underline{l})} {\rm wt}\left(d(\underline{l})_{\underline{i}'}^{\underline{l}}\right).$$
Since all the occurring weights are $\pm 1$ we see that if $\gamma_{\mathbf{s},b,\mathbf{t}} \neq 0$ then there must be at least one coefficient on the right that is also non-zero.

The case that $\mathbf{s}$ contains no $\circ$ is dealt with in an analogous fashion by swapping the roles of $\underline{i}$ and $\underline{j}$.
\end{proof}

From this it follows that we can always find a chain of diagrams with non-zero coefficients that end in an oriented (non-generalized) Brauer diagram.

\begin{corollary} \label{cor:eliminate_subdiagrams}
Assume $d \leq m+n$. Let $f \in {\rm End}_{{\rm Mat}(\mathcal{OB}_d(m-n))}\left( \bigoplus_{\mathbf{s}} \mathbf{s}\right)$ be an element that commutes with the action of $\mathfrak{osp}(V)$ and write
$$ f = \sum_{(\mathbf{t}, b, \mathbf{s}) \text{ oriented}} \gamma_{\mathbf{s},b,\mathbf{t}} 1_\mathbf{s} b 1_\mathbf{t}.$$
Assume that there exist $\gamma_{\mathbf{s}_r,b_r,\mathbf{t}_r} \neq 0$ such that $b_r \in \widehat{\mathcal{B}}[d]_{(k,r)}$ for some $r > 0$. 
Then there exists a sequence $\left( (\mathbf{s}_i,b_i,\mathbf{t}_i) \right)_{0 \leq i \leq r}$ of oriented generalized Brauer diagrams with $b_i \in \widehat{\mathcal{B}}[d]_{(k_i,i)}$ for some $k_i \geq k$ such that $\gamma_{\mathbf{s}_i,b_i,\mathbf{t}_i} \neq 0$ for all $i$ and $b_r \lhd b_{r-1} \lhd \ldots \lhd b_1 \lhd b_0$. Especially note that $b_0 \in \widehat{\mathcal{B}}[d]_{(k_0,0)}$.
\end{corollary}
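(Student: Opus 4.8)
The plan is to obtain this as an immediate consequence of Lemma \ref{lem:find_larger} by a downward induction on $r$. First I would record, exactly as at the beginning of the proof of Lemma \ref{lem:find_larger}, that the hypothesis forces us into the odd case: in the even case a generalized Brauer diagram possessing a singleton block requires the symbol $\circ$ in its source or target orientation sequence, and then $W_{\mathbf{s}}=0$ or $W_{\mathbf{t}}=0$, so $\gamma_{\mathbf{s},b,\mathbf{t}}=0$; hence a non-zero coefficient attached to a diagram with $r>0$ can only occur in the odd case, where Lemma \ref{lem:find_larger} is available.

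Next I would set up the induction. Start from the given datum $(\mathbf{s}_r,b_r,\mathbf{t}_r)$, for which $b_r\in\widehat{\mathcal{B}}[d]_{(k,r)}$ and $\gamma_{\mathbf{s}_r,b_r,\mathbf{t}_r}\neq 0$, and put $k_r:=k$. Suppose that for some $0<i\le r$ we have already produced an oriented generalized Brauer diagram $(\mathbf{s}_i,b_i,\mathbf{t}_i)$ with $b_i\in\widehat{\mathcal{B}}[d]_{(k_i,i)}$ for some $k_i\ge k$, with $\gamma_{\mathbf{s}_i,b_i,\mathbf{t}_i}\neq 0$, and with $b_r\lhd b_{r-1}\lhd\cdots\lhd b_i$. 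Since $i>0$ the diagram $b_i$ has $2i\ge 2$ singleton blocks, so at least one of the sequences $\mathbf{s}_i,\mathbf{t}_i$ contains the symbol $\circ$; therefore $(\mathbf{s}_i,b_i,\mathbf{t}_i)$ satisfies the hypotheses of Lemma \ref{lem:find_larger} (recall $d\le m+n$ is assumed, and that lemma is symmetric in source and target, handling the two possible locations of the $\circ$ by swapping the roles of $\underline{i}$ and $\underline{j}$). Applying it yields $(\mathbf{s}_{i-1},b_{i-1},\mathbf{t}_{i-1})$ with $b_{i-1}\in\widehat{\mathcal{B}}[d]_{(k_{i-1},i-1)}$ for some $k_{i-1}\ge k_i\ge k$, with $\gamma_{\mathbf{s}_{i-1},b_{i-1},\mathbf{t}_{i-1}}\neq 0$ and $b_i\lhd b_{i-1}$. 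As $\lhd$ is the refinement order on set partitions it is transitive, so $b_r\lhd\cdots\lhd b_{i-1}$, completing the inductive step.

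Running the induction down to $i=0$ produces the sequence $\bigl((\mathbf{s}_i,b_i,\mathbf{t}_i)\bigr)_{0\le i\le r}$ asserted in the statement; and since $b_0\in\widehat{\mathcal{B}}[d]_{(k_0,0)}$ it has no singleton blocks, i.e.\ it is an honest (non-generalized) Brauer diagram. I do not expect any genuine obstacle beyond bookkeeping: the entire content is carried by Lemma \ref{lem:find_larger}, and the only things to watch are (a) that the number of vertical strands never decreases as one passes down the chain $b_r,b_{r-1},\dots,b_0$, which is part of the output of that lemma and gives $k_i\ge k$ throughout, and (b) that the $\circ$ required to invoke Lemma \ref{lem:find_larger} is still present at each stage, which holds precisely because we stop at $i=0$ and every $b_i$ with $i>0$ still carries a singleton.
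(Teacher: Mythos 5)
Your proposal is correct and matches the paper's argument: the paper's proof is precisely the one-line observation that one obtains the chain by applying Lemma~\ref{lem:find_larger} repeatedly, setting $(\mathbf{s}_{i-1},b_{i-1},\mathbf{t}_{i-1}):=(\mathbf{s}',b',\mathbf{t}')$ at each step. Your additional remarks (the hypothesis forces the odd case, the number of vertical strands does not decrease along the chain, and transitivity of $\lhd$) are correct bookkeeping details that the paper leaves implicit.
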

\begin{proof}
This follows by successively applying Lemma \ref{lem:find_larger} to $(\mathbf{s}_i,b_i,\mathbf{t}_i)$ and setting $(\mathbf{s}_{i-1},b_{i-1},\mathbf{t}_{i-1}) := (\mathbf{s}',b',\mathbf{t}')$.
\end{proof}

\begin{remark}{\rm
In general, the map $\Psi$ is not surjective. In \cite{LZ1} it was shown that for $d \geq  m(2n+1)$ in the even case the map $\Psi$ is not surjective. This does not contradict Theorem \ref{thm:maintwo} since in the case of $m,n \neq 0$ we have $m(2n+1) = 2mn+m$ which is strictly greater than the assumed bound in the theorem and in the case of $n=0$ it holds $m(2n+1)=m$ which is also strictly greater. In the classical cases, the bounds are accurate for the symplectic and even orthogonal cases, but in the odd orthogonal case the Schur-Weyl duality holds already for $2m+1\geq d$,  see \cite[p. 870 Theorem b)]{Brauer} and \cite[Theorem 1.4]{Grood}, whereas our bound is $m\geq d$.}
\end{remark}

\begin{remark}{\rm
Similar to the definition of the oriented Brauer category $\mathcal{OB}(m-n)$, one defines the Brauer category $\mathcal{B}r (\delta)$ using Brauer diagrams having different numbers of vertices at top and bottom. Both, the embedding into the additive closure of the oriented Brauer category as well as the proof of surjectivity generalise to this situation, as they only rely on the diagrammatic description. The number $d$ has to be substituted for the number of arcs in each morphism space.}
\end{remark}

\bibliographystyle{spmpsci}

\begin{thebibliography}{}
\bibitem[AST]{AST} H. H. Andersen, C. Stroppel, D. Tubbenhauer, \emph{Semisimplicity of Hecke and (walled) Brauer algebras},  arXiv:1507.07676.

\bibitem [B-N]{BarNatan} {D. Bar-Natan}, \emph{Khovanov homology for tangles and cobordisms}, Geometry and Topology, {\bf 9} (2005) 1443--1499.

\bibitem[BCHLLS]{Benkartetal} G. Benkart, M. Chakrabarti, T. Halverson, R. Leduc, C. Lee and J. Stroomer, \emph{Tensor product representations of general linear groups and their connections with Brauer algebras}, {J. Algebra} {\bf 166} (1994), 529--567.

\bibitem[BR]{BR} A. Berele and A.Regev, \emph{Hook Young diagrams with applications to combinatorics and to representations of Lie superalgebras}, Adv. Math. {\bf 64} (1987), 118--175.

\bibitem[Br]{Brauer} R. Brauer, \emph{On algebras which are connected with the semisimple continuous groups}, {Ann. of Math. (2)}, \textbf{38} (4):857--872, 1937.

\bibitem[BCNR]{Betal} J. Brundan, J. Comes, D. Nash,  and A. Reynolds, \emph{A basis theorem for the affine oriented Brauer category and its cyclotomic quotients}, arXiv:1404.6574.

\bibitem[BS1]{BS3} J. Brundan and C. Stroppel, \emph{Highest weight categories arising from {K}hovanov's diagram algebra {III}: category {$\mathcal{O}$}}, Represent. Theory \textbf{15} (2011), 170--243.

\bibitem[BS2]{BS5} J. Brundan and C. Stroppel, \emph{Gradings on walled {B}rauer algebras and {K}hovanov's arc algebra}, Adv. Math. \textbf{231} (2012), no.~2, 709--773.

\bibitem[CW]{CW} J. Comes and  B. Wilson, \emph{Deligne's category $\operatorname{Rep}(\operatorname{GL}(\delta))$, representations of general linear supergroups},  Represent. Theory {\bf 16}, 568-609, 2012.

\bibitem[Co]{Ke} K. Coulembier, \emph{The orthosymplectic superalgebra in harmonic analysis}. 
J. Lie Theory 23 (2013), no. 1, 55–83.

\bibitem[De]{Deligne} P. Deligne, \emph{La serie exceptionnelle de groupes de Lie}, C. R. Acad. Sci. Paris Ser. I Math. {\bf 322}, no. 4, 321-326, 1996.

\bibitem[DDS]{DDS} R. Dipper, S. Doty and F. Stoll, \emph{The quantized walled Brauer algebra and mixed tensor space.}, Algebr. Represent. Theory 17 (2014), no. 2, 675–701.

\bibitem[DLZ]{DLZ} P. Deligne, G. Lehrer and  R. Zhang, \emph{The first fundamental theorem of invariant theory for the orthosymplectic super group}, arXiv:1508.04202.

\bibitem[ES1]{ES} M. Ehrig and C. Stroppel, \emph{Nazarov-Wenzl algebras, coideal subalgebras and categorified skew Howe duality}, arXiv:1310.1972.

\bibitem[ES2]{ES2} M. Ehrig and C. Stroppel, \emph{Koszul gradings on Brauer algebras},  Int. Math. Res. Notices (2015) doi: 10.1093/imrn/rnv267. 

\bibitem[Gr]{Grood} C. Grood, \emph{Brauer algebras and centralizer algebras for $\mathrm{SO}(2n,\mathbb{C})$}, {J. Algebra}, \textbf{222} (1999):678--707.

\bibitem[GW]{GW} R. Goodman and N. R. Wallach, \emph{Symmetry, representations, and invariants}, Graduate Texts in Mathematics \textbf{255}, Springer (2009).

\bibitem[GS]{GrusonSerganova} C. Gruson and V. Serganova, \emph{Bernstein-Gelfand-Gelfand reciprocity and indecomposable projective modules for classical algebraic supergroups}. Mosc. Math. J. {\bf 13}, no. 2, 281-313, 364,  2013.

\bibitem[KM]{Kosuda} M. Kosuda and J. Murakami, \emph{Centralizer algebras of the mixed tensor representations of quantum group $U_q(\mathfrak{gl}(n,\mathbb{C}))$}. Osaka J. Math. {\bf 30} (1993), no. 3, 475--507.

\bibitem[LZ1]{LZ1} G. Lehrer and  R. Zhang, \emph{Invariants of the orthosymplectic Lie superalgebra and super Pfaffians}, arXiv:1507.01329.

\bibitem[LZ2]{LZ2} G. Lehrer and  R. Zhang, \emph{The first fundamental theorem of invariant theory for the orthosymplectic supergroup}, arXiv:1401.7395.

\bibitem[LZ3]{LZ3} G. Lehrer and  R. Zhang, \emph{The second fundamental theorem of invariant theory for the orthosymplectic supergroup},  Ann. of Math. (2) \textbf{176} (2012), no. 3, 2031--2054.

\bibitem[Li]{GeLi} G. Li, \emph{A KLR Grading of the Brauer Algebras}, arXiv:1409.1195.

\bibitem[Mu]{Musson} I. M. Musson, \emph{Lie superalgebras and enveloping algebras}, Graduate Studies in Mathematics \textbf{131}, American Mathematical Society (2012).

\bibitem[Ni]{Ni} P. Nikitin, \emph{The centralizer algebra of the diagonal action of the group $GL_n(\mathbb{C})$ in a mixed tensor space}, {J. Math. Sci.} {\bf 141} (2007), 1479--1493.

\bibitem[Ru]{Rui} H. Rui, \emph{A criterion on the semisimple Brauer algebras}, J. Combin. Theory Ser. A {\bf 111} (2005), 
no. 1, 78--88.

\bibitem[RS]{Rui2} H. Rui and M. Si, \emph{A criterion on the semisimple Brauer algebras. II}, J. Combin. Theory Ser. A {\bf 113} (2006), no. 6, 1199--1203.

\bibitem[SS]{SS} A. Sartori and C. Stroppel, \emph{Walled Brauer algebras as idempotent truncations of level $2$-cyclotomic quotients},  J. Algebra {\bf 440} (2015), 602--638. 

\bibitem[Sea]{Serganova} V. Serganova,  \emph{Finite dimensional representations of algebraic supergroups}, ICM Plenary talk Seoul 2014.

\bibitem[Sev]{Sergeev} A. Sergeev, \emph{Tensor algebra of the identity representation as a module over the Lie superalgebras $GL(n,m)$ and $Q(n)$}, Math. USSR Sbornik, {\bf 51} (1985), 419--427.

\bibitem[Tu]{Tu} V. Turaev, \emph{Operator invariants of matrices and R-matrices}, {Izv. Akad. Nauk SSSR} {\bf 53} (1989) 1073--1107.

\bibitem[We]{Wenzl} H. Wenzl, \emph{On the structure of Brauer's centralizer algebras}, Ann. of Math. (2) {\bf 128} (1988), no. 1, 173--193.

\bibitem[Zh]{Z} R. Zhang, \emph{Serre presentations of Lie superalgebras}, Advances in Lie superalgebras, Springer INdAM Ser. (2014), no. 7, 235--280.

\end{thebibliography}

\end{document}